\newtheorem{theorem}{Theorem}
\newtheorem{lemma}{Lemma}
\newtheorem{assumption}{Assumption}
\newtheorem{remark}{Remark}
\icmltitlerunning{Accelerated Stochastic Gradient-free and Projection-free Methods}
\begin{document}

\twocolumn[
\icmltitle{Accelerated Stochastic Gradient-free and Projection-free Methods}

% It is OKAY to include author information, even for blind
% submissions: the style file will automatically remove it for you
% unless you've provided the [accepted] option to the icml2020
% package.

% List of affiliations: The first argument should be a (short)
% identifier you will use later to specify author affiliations
% Academic affiliations should list Department, University, City, Region, Country
% Industry affiliations should list Company, City, Region, Country

% You can specify symbols, otherwise they are numbered in order.
% Ideally, you should not use this facility. Affiliations will be numbered
% in order of appearance and this is the preferred way.
%\icmlsetsymbol{equal}{*}

\begin{icmlauthorlist}
\icmlauthor{Feihu Huang }{1,2}
\icmlauthor{Lue Tao }{1,2}
\icmlauthor{Songcan Chen }{1,2}
\end{icmlauthorlist}

\icmlaffiliation{1}{College of Computer Science \&
Technology, Nanjing University of Aeronautics and Astronautics, Nanjing 211106, China}
\icmlaffiliation{2}{MIIT Key Laboratory of Pattern Analysis \& Machine Intelligence}

\icmlcorrespondingauthor{Feihu Huang}{huangfeihu@nuaa.edu.cn}

% You may provide any keywords that you
% find helpful for describing your paper; these are used to populate
% the "keywords" metadata in the PDF but will not be shown in the document
\icmlkeywords{Machine Learning, ICML}

\vskip 0.3in
]

% this must go after the closing bracket ] following \twocolumn[ ...

% This command actually creates the footnote in the first column
% listing the affiliations and the copyright notice.
% The command takes one argument, which is text to display at the start of the footnote.
% The \icmlEqualContribution command is standard text for equal contribution.
% Remove it (just {}) if you do not need this facility.

\printAffiliationsAndNotice{}  % leave blank if no need to mention equal contribution
%\printAffiliationsAndNotice{\icmlEqualContribution} % otherwise use the standard text.

\begin{abstract}
In the paper, we propose a class of accelerated stochastic gradient-free and projection-free
(a.k.a., zeroth-order Frank-Wolfe) methods
to solve the constrained stochastic and finite-sum nonconvex optimization.
Specifically, we propose an accelerated stochastic zeroth-order Frank-Wolfe (Acc-SZOFW)
method based on the variance reduced technique of SPIDER/SpiderBoost and a novel momentum accelerated technique.
Moreover, under some mild conditions, we prove that the Acc-SZOFW has the function query complexity of $O(d\sqrt{n}\epsilon^{-2})$ for finding an $\epsilon$-stationary point in the finite-sum problem,
which improves the exiting best result by a factor of $O(\sqrt{n}\epsilon^{-2})$,
and has the function query complexity of $O(d\epsilon^{-3})$ in the stochastic problem, which improves the exiting best result by a factor of $O(\epsilon^{-1})$.
To relax the large batches
required in the Acc-SZOFW, we further propose a novel accelerated stochastic zeroth-order Frank-Wolfe (Acc-SZOFW*) based on
a new variance reduced technique of STORM, which still
reaches the function query complexity of $O(d\epsilon^{-3})$ in the stochastic problem without relying on any large batches.
In particular, we present an accelerated framework of the Frank-Wolfe methods based on the proposed momentum accelerated technique.
The extensive experimental results on black-box adversarial attack and robust black-box classification demonstrate the efficiency of our algorithms.
\end{abstract}
\section{Introduction}
 In the paper, we focus on solving the following constrained stochastic and finite-sum optimization problems
 \begin{align} \label{eq:1}
  \min_{x\in \mathcal{X}} \ f(x)=\left\{
\begin{aligned}
 \mathbb{E}_{\xi} [f(x;\xi)]  & \qquad \mbox{(stochastic)}\\
 \frac{1}{n}\sum_{i=1}^n f_i(x) & \qquad \mbox{(finite-sum)}
 \end{aligned} \right.
 \end{align}
where $f(x): \mathbb{R}^d \rightarrow \mathbb{R}$ is a nonconvex and smooth loss function, 
and the restricted domain $\mathcal{X} \subseteq \mathbb{R}^d$ is supposed to be convex and compact, 
and $\xi$ is a random variable that following an unknown distribution.
When $f(x)$ denotes the expected risk function, the problem \eqref{eq:1} will be seen as a stochastic problem. 
While $f(x)$ denotes the empirical risk function, it will be seen as a finite-sum problem.
In fact, the problem \eqref{eq:1} appears in many machine learning models such as multitask learning, recommendation systems and,
structured prediction \cite{jaggi2013revisiting,lacoste2013block,hazan2016variance}.
For solving the constrained problem \eqref{eq:1}, one common approach is the projected gradient method \cite{iusem2003convergence}
that alternates between optimizing in the unconstrained space and projecting onto the constrained set $\mathcal{X}$.
However, the projection is quite expensive to compute in many constrained sets such as the set of all bounded nuclear norm matrices.
The Frank-Wolfe algorithm (i.e., conditional gradient)\cite{frank1956algorithm,jaggi2013revisiting} is a good candidate for solving the problem \eqref{eq:1}, which only
needs to compute a linear operator instead of projection operator at each iteration.
Following \cite{jaggi2013revisiting}, the linear optimization on $\mathcal{X}$ is much faster than
the projection onto $\mathcal{X}$ in many problems such as the set of all bounded nuclear norm matrices.

\begin{table*}
  \centering
  \caption{ \emph{Function query complexity} comparison of the representative non-convex zeroth-order \emph{Frank-Wolfe} methods for finding
  an $\epsilon$-stationary point of the problem \eqref{eq:1}, \emph{i.e.,} $\mathbb{E}\|\nabla \mathcal {G}(x)\|\leq \epsilon$.
  $T$ denotes the total iterations.
  GauGE, UniGE and CooGE are abbreviations of Gaussian distribution, Uniform distribution and Coordinate-wise smoothing gradient estimators, respectively.
  Note that FW-Black and Acc-ZO-FW are deterministic algorithms, the other are stochastic algorithms.
  Here \textbf{query-size} denotes the \emph{function query size} required in estimating one zeroth-order gradient in these algorithms. Note that these query-sizes are only used in the theoretical analysis. }
  \label{tab:1}
  \resizebox{\textwidth}{!}{
\begin{tabular}{c|c|c|c|c|c}
  \hline
  % after \\: \hline or \cline{col1-col2} \cline{col3-col4} ...
  \textbf{Problem} & \textbf{Algorithm} & \textbf{Reference}  & \textbf{Gradient Estimator} & \textbf{Query Complexity} & \textbf{Query-Size} \\ \hline
  \multirow{3}*{Finite-Sum} & FW-Black & \citet{chen2018frank}   & GauGE or UniGE &  $O(dn\epsilon^{-4})$ & $O(ndT)$ \\ \cline{2-6}
  &Acc-ZO-FW & Ours  & CooGE & {\color{blue}{ $O(dn\epsilon^{-2})$ }} & $O(nd)$ \\ \cline{2-6}
  &Acc-SZOFW & Ours  & CooGE & {\color{blue}{ $O(dn^{\frac{1}{2}}\epsilon^{-2})$ }} & $O(n^{\frac{1}{2}}d)$ \\ \hline
  \multirow{4}*{Stochastic} & ZO-SFW  &\citet{sahu2019towards} & GauGE & $O(d^{\frac{4}{3}}\epsilon^{-4})$ & $O(1)$  \\  \cline{2-6}
  &ZSCG  & \citet{balasubramanian2018zeroth} & GauGE & $O(d\epsilon^{-4})$ & $O(dT)$ \\  \cline{2-6}
  &Acc-SZOFW  & Ours & CooGE & {\color{blue}{ $O(d\epsilon^{-3})$ }} &$O(dT^{1/2})$ \\  \cline{2-6}
  &Acc-SZOFW  & Ours & UniGE & {\color{blue}{ $O(d\epsilon^{-3})$ }} &$O(d^{-1/2}T^{1/2})$ \\  \cline{2-6}
  &Acc-SZOFW*  & Ours & CooGE & {\color{blue}{ $O(d\epsilon^{-3})$ }} &$O(d)$ \\  \cline{2-6}
  &Acc-SZOFW*  & Ours & UniGE & {\color{blue}{ $O(d^{\frac{3}{2}}\epsilon^{-3})$ }} & $O(1)$ \\
  \hline
\end{tabular}
}
\end{table*}

Due to its projection-free property and ability to handle structured constraints,
the Frank-Wolfe algorithm has recently regained popularity in many machine learning applications, and its variants have been widely studied.
For example, several convex variants of Frank-Wolfe algorithm \cite{jaggi2013revisiting,lacoste2015global,lan2016conditional,xu2018frank} have been studied.
In the big data setting, the corresponding online and stochastic Frank-Wolfe algorithms \cite{hazan2012projection,hazan2016variance,hassani2019stochastic,xie2019stochastic} have been developed,
and their convergence rates were studied. The above Frank-Wolfe algorithms were mainly studied in the convex setting.
In fact, the Frank-Wolfe algorithm and its variants are also successful in solving nonconvex problems such as adversarial attacks \cite{chen2018frank}.
Recently, some nonconvex variants of Frank-Wolfe algorithm \cite{lacoste2016convergence,reddi2016stochastic,qu2018non,shen2019complexities,
yurtsever2019conditional,hassani2019stochastic,zhang2019one}
have been developed.

Until now, the above Frank-Wolfe algorithm and its variants need to compute the gradients of objective functions at each iteration.
However, in many complex machine learning problems, the explicit gradients of the objective functions are difficult or infeasible to obtain.
For example, in the reinforcement learning \citep{malik2019derivative,huang2020accelerated}, some complex graphical model inference \citep{wainwright2008graphical} and metric learning \cite{chen2019curvilinear} problems,
it is difficult to compute the explicit gradients of objective functions.
Even worse, in the black-box adversarial attack problems \cite{liu2018zeroth,chen2018frank},
only function values (\emph{i.e.}, prediction labels) are accessible.
Clearly, the above Frank-Wolfe methods will fail in dealing with these problems.
Since it only uses the function values in optimization, the gradient-free (zeroth-order) optimization
method \citep{duchi2015optimal,Nesterov2017RandomGM} is a promising choice to address these problems.
More recently, some zeroth-order Frank-Wolfe methods \cite{balasubramanian2018zeroth,chen2018frank,sahu2019towards} have been proposed and studied.
However, these zeroth-order Frank-Wolfe methods suffer from high function query complexity in solving the problem \eqref{eq:1} (please see Table \ref{tab:1}).

In the paper, thus, we propose a class of accelerated zeroth-order Frank-Wolfe methods to solve the problem \eqref{eq:1}, where
$f(x)$ is possibly black-box. Specifically, we propose an accelerated stochastic zeroth-order Frank-Wolfe (Acc-SZOFW)
method based on the variance reduced technique of SPIDER/SpiderBoost \cite{fang2018spider,wang2018spiderboost} and a novel momentum accelerated technique.
Further, we propose a novel accelerated stochastic zeroth-order Frank-Wolfe (Acc-SZOFW*) to relax the large mini-batch size required in the Acc-SZOFW.
\subsection*{Contributions}
In summary, our main contributions are given as follows:
\vspace*{-6pt}
\begin{itemize}
\setlength\itemsep{0em}
\item[1)] We propose an accelerated stochastic zeroth-order Frank-Wolfe (Acc-SZOFW) method based on the variance reduced technique
          of SPIDER/SpiderBoost and a novel momentum accelerated technique.
\item[2)] Moreover, under some mild conditions, we prove that the Acc-SZOFW has the function query complexity of $O(d\sqrt{n}\epsilon^{-2})$ for finding an $\epsilon$-stationary point in
          the finite-sum problem \eqref{eq:1}, which improves the exiting best result by a factor of $O(\sqrt{n}\epsilon^{-2})$,
          and has the function query complexity of $O(d\epsilon^{-3})$ in the stochastic problem \eqref{eq:1}, which improves the exiting best result by a factor of $O(\epsilon^{-1})$.
\item[3)] We further propose a novel accelerated stochastic zeroth-order Frank-Wolfe (Acc-SZOFW*) to relax the large mini-batch size
          required in the Acc-SZOFW. We prove that the Acc-SZOFW* still
          has the function query complexity of $O(d\epsilon^{-3})$ without relying on the large batches.
\item[4)] In particular, we propose an accelerated framework of the Frank-Wolfe methods based on the proposed momentum accelerated technique.
\end{itemize}
\vspace*{-6pt}
\section{Related Works}
\subsection{Zeroth-Order Methods}
\vspace*{-4pt}
Zeroth-order (gradient-free) methods can be effectively used to solve many machine learning problems,
where the explicit gradient is difficult or infeasible to obtain.
Recently, the zeroth-order methods have been widely studied in machine learning community.
For example, several zeroth-order methods \cite{ghadimi2013stochastic,duchi2015optimal,Nesterov2017RandomGM}
have been proposed by using the Gaussian smoothing technique.
Subsequently, \cite{liu2018zeroth,ji2019improved} recently proposed
accelerated zeroth-order stochastic gradient methods based on the variance reduced techniques.
To deal with nonsmooth optimization, some zeroth-order proximal gradient methods \cite{ghadimi2016mini,Huang2019faster,ji2019improved}
and zeroth-order ADMM-based methods \cite{gao2018information,liu2018admm,huang2019zeroth,huang2019nonconvex} have been proposed.
In addition, more recently, \cite{chen2019zo} has proposed a zeroth-order adaptive momentum method.
To solve the constrained optimization,
the zeroth-order Frank-Wolfe methods \cite{balasubramanian2018zeroth,chen2018frank,sahu2019towards} and the zeroth-order projected gradient methods
\cite{liu2018projected} have been recently proposed
and studied.
\subsection{ Variance-Reduced and Momentum Methods}
To accelerate stochastic gradient descent (SGD) algorithm, various variance-reduced algorithms such as SAG \cite{roux2012stochastic},
SAGA \cite{defazio2014saga}, SVRG \cite{johnson2013accelerating} and SARAH \cite{nguyen2017sarah} have been presented and studied.
Due to the popularity of deep learning, recently the large-scale nonconvex learning problems received wide interest in machine learning community.
Thus, recently many corresponding variance-reduced algorithms to nonconvex SGD have also been proposed and studied, e.g., SVRG \cite{allen2016variance,reddi2016stochastic},
SCSG \cite{lei2017non}, SARAH \cite{nguyen2017stochastic}, SPIDER \cite{fang2018spider}, SpiderBoost \cite{wang2018spiderboost,wang2019spiderboost},
SNVRG \cite{zhou2018stochastic}.

Another effective alternative is to use momentum-based method to accelerate SGD.
Recently, various momentum-based stochastic algorithms for the convex optimization
have been proposed and studied, e.g.,  APCG \cite{lin2014accelerated}, AccProxSVRG \cite{nitanda2014stochastic} and Katyusha \cite{allen2017katyusha}.
At the same time, for the nonconvex optimization,  some momentum-based stochastic algorithms
have been also studied, e.g., RSAG \cite{ghadimi2016accelerated}, Prox-SpiderBoost-M \cite{wang2019spiderboost}, STORM \cite{cutkosky2019momentum}
and Hybrid-SGD \cite{tran2019hybrid}.
\vspace*{-6pt}
\section{Preliminaries}
\subsection{ Zeroth-Order Gradient Estimators }
In this subsection, we introduce two useful zeroth-order gradient estimators, i.e., uniform smoothing gradient estimator (UniGE)
and coordinate smoothing gradient estimator (CooGE) \cite{liu2018zeroth,ji2019improved}.
Given any function $f_i(x): \mathbb{R}^d \rightarrow \mathbb{R}$, the UniGE can generate an approximated gradient as follows:
\begin{align}
 \hat{\nabla}_{uni} f_i(x) = \frac{ d(f_i(x + \beta u)-f_i(x))}{\beta}u,
\end{align}
where $u \in \mathbb{R}^d$ is a vector generated from the uniform distribution over the unit sphere,
and $\beta$ is a smoothing parameter.
While the CooGE can generate an approximated gradient:
\begin{align}
 \hat{\nabla}_{coo} f_i(x) = \sum_{j=1}^d \frac{ f_i(x+\mu_j e_j)-f_i(x-\mu_j e_j)}{2\mu_j}e_j,
\end{align}
where $\mu_j$ is a coordinate-wise smoothing parameter, and $e_j$ is
a basis vector with 1 at its $j$-th coordinate, and 0 otherwise.
Without loss of generality, let $\mu=\mu_1=\cdots=\mu_d$.
\subsection{ Standard Frank-Wolfe Algorithm and Assumptions }
The standard Frank-Wolfe (i.e., conditional gradient) algorithm solves the above problem \eqref{eq:1} by the following iteration:
at $t+1$-th iteration,
\begin{align}
\left\{
\begin{aligned}
& w_{t+1} = \arg\max_{w\in \mathcal{X}} \langle w,-\nabla f(x_{t})\rangle, \\
 &x_{t+1} = (1-\gamma_{t+1})x_{t} + \gamma_{t+1}w_{t+1},
 \end{aligned} \right.
\end{align}
where $\gamma_{t+1} \in (0,1)$ is a step size. For the nonconvex optimization, we apply the following duality gap (i.e., Frank-Wolfe gap \cite{jaggi2013revisiting})
\begin{align}
 \mathcal{G}(x) = \max_{w\in \mathcal{X}} \langle w-x,-\nabla f(x)\rangle,
\end{align}
to give the standard criteria of convergence $\|\mathcal{G}(x)\| \leq \epsilon$ (or $\mathbb{E}\|\mathcal{G}(x)\|\leq \epsilon$)
for finding an $\epsilon$-stationary point, as in \cite{reddi2016stochastic}.

Next, we give some standard assumptions regarding problem \eqref{eq:1} as follows:
\begin{assumption}
Let $f_i(x)=f(x;\xi_i) $, where $\xi_i$ samples from the distribution of random variable $\xi$.
Each loss function $f_i(x)$ is $L$-smooth such that
\vspace*{-4pt}
\begin{align}
& \|\nabla f_i(x)-\nabla f_i(y)\| \leq L \|x - y\|, \ \forall x,y \in \mathcal{X}, \nonumber \\
& f_i(x) \leq f_i(y) + \nabla f_i(y)^T(x-y) + \frac{L}{2}\|x-y\|^2.  \nonumber
\end{align}
\end{assumption}
\vspace*{-4pt}
Let $f_{\beta}(x)=\mathbb{E}_{u\sim U_B}[f(x+\beta u)]$ be a smooth approximation of $f(x)$,
where $U_B$ is the uniform distribution over the $d$-dimensional unit Euclidean ball $B$. Following \cite{ji2019improved},
we have $\mathbb{E}_{(u,\xi)}[\hat{\nabla}_{uni} f_{\xi}(x)]=\nabla f_{\beta}(x)$.
\begin{assumption}
The variance of stochastic (zeroth-order) gradient is bounded, \emph{i.e.,} there exists a constant $\sigma_1 >0$ such that for all $x$,
it follows $\mathbb{E}\|\nabla f_{\xi}(x) - \nabla f(x)\|^2 \leq \sigma_1^2$; There exists a constant $\sigma_2 >0$ such that for all $x$,
it follows $\mathbb{E}\|\hat{\nabla}_{uni} f_{\xi}(x) - \nabla f_{\beta}(x)\|^2 \leq \sigma_2^2$.
\end{assumption}
\vspace*{-4pt}
\begin{assumption}
 The constraint set $\mathcal{X}\subseteq \mathbb{R}^d$ is compact with the diameter: $\max_{x,y\in \mathcal{X}}\|x-y\|\leq D$.
\end{assumption}
\vspace*{-4pt}
\begin{assumption}
 The objective function $f(x)$ is bounded from below in $\mathcal{X}$, \emph{i.e.,}  there exists a non-negative constant $\triangle$, for all $x\in \mathcal{X}$
 such as $f(x) - \inf_{y\in \mathcal{X}} f(y) \leq \triangle$.
\end{assumption}
Assumption 1 imposes the smoothness on each loss function $f_i(x)$ or $f(x,\xi_i)$ , which is commonly used in the convergence analysis
of nonconvex algorithms \cite{ghadimi2016mini}.
Assumption 2 shows that the variance of stochastic or zeroth-order gradient is bounded in norm,
which have been commonly used in the convergence analysis of stochastic zeroth-order algorithms
\cite{gao2018information,ji2019improved}.
Assumptions 3 and 4 are standard for the convergence analysis of Frank-Wolfe algorithms \cite{jaggi2013revisiting,shen2019complexities,yurtsever2019conditional}.
\section{ Accelerated Stochastic Zeroth-Order Frank-Wolfe Algorithms }
In the section, we first propose an accelerated stochastic zeroth-order Frank-Wolfe (Acc-SZOFW) algorithm based on the variance reduced technique
of SPIDER/SpiderBoost and a novel momentum accelerated technique. We then further propose a novel accelerated stochastic zeroth-order Frank-Wolfe (Acc-SZOFW*) algorithm to relax the large mini-batch size required in the Acc-SZOFW.
\subsection{Acc-SZOFW Algorithm}
In the subsection, we propose an Acc-SZOFW algorithm to solve the problem \eqref{eq:1},
where the loss function is possibly black-box.
The Acc-SZOFW algorithm is given in Algorithm \ref{alg:1}.

We first propose an accelerated \textbf{deterministic} zeroth-order Frank-Wolfe (Acc-ZO-FW) algorithm to solve the finite-sum problem \eqref{eq:1}
as a baseline by using the zeroth-order gradient $v_t =\frac{1}{n}\sum_{i=1}^n \hat{\nabla}_{coo} f_i(z_t)$ in Algorithm \ref{alg:1}.
Although \citet{chen2018frank} has proposed a \textbf{deterministic} zeroth-order Frank-Wolfe (FW-Black) algorithm by using the momentum-based accelerated
zeroth-order gradients,
our Acc-ZO-FW algorithm still has lower query complexity than the FW-Black algorithm (see Table \ref{tab:1}).
\begin{algorithm}[tb]
\caption{Acc-SZOFW Algorithm}
\label{alg:1}
\begin{algorithmic}[1] %[1] enables line numbers
\STATE {\bfseries Input:}  Total iteration $T$, step-sizes $\{\eta_t,\gamma_t \in (0,1) \}_{t=0}^{T-1}$,
weighted parameters $\{\alpha_t \in [0,1] \}_{t=0}^{T-1}$, epoch-size $q$, mini-batch size $b$ or $b_1, \ b_2$; \\
\STATE {\bfseries Initialize:} $x_0 = y_0 = z_0 \in \mathcal{X}$; \\
\FOR{$t = 0, 1, \ldots, T-1$}
\IF{$\mod(t,q) = 0$}
\STATE For the \textbf{finite-sum} setting, compute $v_t = \hat{\nabla}_{coo} f(z_t) = \frac{1}{n}\sum_{i=1}^n\hat{\nabla}_{coo} f_{i}(z_t)$;  \\
\STATE For the \textbf{stochastic} setting, randomly select $b_1$ samples $\mathcal{B}_1=\{\xi_1,\cdots,\xi_{b_1}\}$,
        and compute $v_t = \hat{\nabla}_{coo} f_{\mathcal{B}_1}(z_t)$, or
        draw i.i.d. $\{u_1,\cdots,u_{b_1}\}$ from uniform distribution over unit sphere, then compute
        $v_t = \hat{\nabla}_{uni} f_{\mathcal{B}_1}(z_t)$;\\
\ELSE
\STATE For the \textbf{finite-sum} setting, randomly select $b=|\mathcal{B}|$ samples $\mathcal{B} \subseteq \{1,\cdots,n\}$, and
       compute $v_t = \frac{1}{b} \sum_{j \in \mathcal{B}} [\hat{\nabla}_{coo} f_{j}(z_t) - \hat{\nabla}_{coo} f_{j}(z_{t-1})] + v_{t-1}$; \\
\STATE For the \textbf{stochastic} setting, randomly select $b_2$ samples $\mathcal{B}_2=\{\xi_1,\cdots,\xi_{b_2}\} $,
       and compute $v_t = \frac{1}{b_2} \sum_{j \in \mathcal{B}_2} [\hat{\nabla}_{coo} f_{j}(z_t) - \hat{\nabla}_{coo} f_{j}(z_{t-1})] + v_{t-1}$, or  \\
        draw i.i.d. $\{u_1,\cdots,u_{b_2}\}$ from uniform distribution over unit sphere, then compute $v_t = \frac{1}{b_2} \sum_{j \in \mathcal{B}_2} [\hat{\nabla}_{uni} f_{j}(z_t) - \hat{\nabla}_{uni} f_{j}(z_{t-1})] + v_{t-1}$; \\
\ENDIF
\STATE Optimize $w_t = \arg\max_{w \in \mathcal{X}} \langle w, -v_t \rangle$;
\STATE Update $x_{t+1} = x_t + \gamma_t (w_t - x_t)$;
\STATE Update $y_{t+1} = z_t + \eta_t (w_t - z_t)$;
\STATE Update $z_{t+1} = (1 - \alpha_{t+1}) y_{t+1} + \alpha_{t+1} x_{t+1}$;
\ENDFOR
\STATE {\bfseries Output:}  $z_{\zeta}$ chosen uniformly random from $\{z_t\}_{t=1}^{T}$.
\end{algorithmic}
\end{algorithm}

When the sample size $n$ is very large in the finite-sum optimization problem \eqref{eq:1}, we will need to waste lots of time to
obtain the estimated full gradient of $f(x)$, and in turn make the whole algorithm became very slow.
Even worse, for the stochastic optimization problem \eqref{eq:1}, we can never obtain the estimated full gradient of $f(x)$.
As a result, the stochastic optimization method is a good choice. Specifically, we can draw a mini-batch $\mathcal{B} \subseteq \{1,2,\cdots,n\} \ (b=|\mathcal{B}|)$
or $\mathcal{B}= \{\xi_1,\cdots,\xi_b\}$ from the distribution of random variable $\xi$, and can obtain the following stochastic
zeroth-order gradient:
\vspace*{-4pt}
\begin{align}
 \hat{\nabla} f_{\mathcal{B}}(x) = \frac{1}{b} \sum_{j\in \mathcal{B}}\hat{\nabla} f_{j}(x),\nonumber
\end{align}
where $\hat{\nabla} f_{j}(\cdot)$ includes $\hat{\nabla}_{coo} f_{j}(\cdot)$ and $\hat{\nabla}_{uni} f_{j}(\cdot)$.

However, this standard zeroth-order stochastic Frank-Wolfe algorithm suffers from large variance in the zeroth-order stochastic gradient.
Following \cite{balasubramanian2018zeroth,sahu2019towards}, this variance will result in high function query complexity.
Thus, we use the variance reduced technique of SPIDER/SpiderBoost as in \cite{ji2019improved} to reduce the variance in the stochastic gradients. Specifically, in Algorithm \ref{alg:1},  we use the following semi-stochastic gradient for solving the stochastic problem:
\vspace*{-4pt}
\begin{align}
 v_t =\left\{ \begin{aligned}
 &\frac{1}{b_1}  \sum_{i\in \mathcal{B}_1} \hat{\nabla} f_i(x_t), \quad \mbox{if} \mod(t,q)=0 \\
 & \frac{1}{b_2}  \sum_{i\in \mathcal{B}_2} \big( \hat{\nabla} f_i(x_t) - \hat{\nabla} f_i(x_{t-1}) \big)
  + v_{t-1}, \ \mbox{otherwise}
  \end{aligned} \right. \nonumber
\end{align}

Moreover, we propose a novel momentum accelerated framework for the Frank-Wolfe algorithm.
Specifically, we introduce two intermediate variables $x$ and $y$, as in \cite{wang2019spiderboost},
and our algorithm keeps all variables $\{x,y,z\}$ in the constraint set $\mathcal{X}$.
In Algorithm \ref{alg:1}, when set $\alpha_{t+1}=0$ or $\alpha_{t+1}=1$, our algorithm will reduce to the
zeroth-order Frank-Wolfe algorithm with the variance reduced technique of SPIDER/SpiderBoost.
When $\alpha_{t+1} \in (0,1)$, our algorithm will generate the following iterations:
\begin{align}
z_1 & = z_0 + ((1 - \alpha_1) \eta_0 + \alpha_1 \gamma_0) (w_0 - z_0), \nonumber\\
z_2 & = z_1 + ((1 - \alpha_2) \eta_1 + \alpha_2 \gamma_1) (w_1 - z_1) \nonumber \\
&  + \alpha_2 (1 - \gamma_1) (1 - \alpha_1) (\gamma_0 - \eta_0) (w_0 - z_0), \nonumber\\
z_3 & = z_2 + ((1 - \alpha_3) \eta_2 + \alpha_3 \gamma_2) (w_2 - z_2) \nonumber \\
&  + \alpha_3 (1 - \gamma_2) (1 - \alpha_2) (\gamma_1 - \eta_1) (w_1 - z_1) \nonumber \\
&  + \alpha_3 (1 \!-\! \gamma_2) (1 \!-\! \alpha_2) (1 \!-\! \gamma_1) (1 \!-\! \alpha_1) (\gamma_0 \!-\! \eta_0) (w_0 \!-\! z_0), \nonumber \\
& \cdots \nonumber
\end{align}
From the above iterations, the updating parameter $z_t$ is a linear combination of the previous terms $w_i-z_i \ (i \le t)$, which coincides the aim of momentum accelerated technique \cite{nesterov2004introductory,allen2017katyusha}. In fact, our momentum accelerated technique does not rely on the version of gradient $v_t$. In other words, our momentum accelerated technique
can be applied in the zeroth-order, first-order, determinate or
stochastic Frank-Wolfe algorithms.

\begin{algorithm}[tb]
\caption{Acc-SZOFW* Algorithm}
\label{alg:2}
\begin{algorithmic}[1] %[1] enables line numbers
\STATE {\bfseries Input:}  Total iteration $T$, step-sizes $\{\eta_t,\gamma_t \in (0,1)\}_{t=0}^{T-1}$, weighted parameters $\{\alpha_t \in [0,1] \}_{t=1}^{T-1}$
 and the parameter $\{\rho_t\}_{t=1}^{T-1}$; \\
\STATE {\bfseries Initialize:} $x_0 = y_0 = z_0 \in \mathcal{X}$; \\
\FOR{$t = 0, 1, \ldots, T-1$}
\IF{$t = 0$}
\STATE Sample a point $\xi_0$, and compute $v_0 = \hat{\nabla}_{coo} f_{\xi_0}(z_0)$, or draw a vector $u\in \mathbb{R}^d$ from uniform distribution
            over unit sphere, then compute $v_0 = \hat{\nabla}_{uni} f_{\xi_0}(z_0)$;\\
\ELSE
\STATE Sample a point $\xi_t$, and compute $v_{t} = \hat{\nabla}_{coo} f_{\xi_t}(z_t) + (1-\rho_t)\big(v_{t-1} -\hat{\nabla}_{coo} f_{\xi_t}(z_{t-1})\big)$,
       or draw a vector $u\in \mathbb{R}^d$ from uniform distribution
            over unit sphere, then compute
$v_t = \hat{\nabla}_{uni} f_{\xi_t}(z_t) + (1-\rho_t)\big(v_{t-1} -\hat{\nabla}_{uni} f_{\xi_t}(z_{t-1})\big)$; \\
\ENDIF
\STATE Optimize $w_t = \arg\max_{w \in \mathcal{X}} \langle w, -v_t \rangle$;
\STATE Update $x_{t+1} = x_t + \gamma_t (w_t - x_t)$;
\STATE Update $y_{t+1} = z_t + \eta_t (w_t - z_t)$;
\STATE Update $z_{t+1} = (1 - \alpha_{t+1}) y_{t+1} + \alpha_{t+1} x_{t+1}$;
\ENDFOR
\STATE {\bfseries Output:}  $z_{\zeta}$ chosen uniformly random from $\{z_t\}_{t=1}^{T}$.
\end{algorithmic}
\end{algorithm}
\subsection{Acc-SZOFW* Algorithm}
In this subsection, we propose a novel Acc-SZOFW* algorithm
based on a new momentum-based variance reduced technique of STORM/Hybrid-SGD \cite{cutkosky2019momentum,tran2019hybrid}.
Although the above Acc-SZOFW algorithm reaches a lower function query complexity, it requires
large batches (Please see Table \ref{tab:1}).
Clearly, the Acc-SZOFW algorithm can not be well competent to the very large-scale problems and the
data flow problems. Thus, we further propose a novel Acc-SZOFW* algorithm
to relax the large batches required in the Acc-SZOFW.
Algorithm \ref{alg:2} details the Acc-SZOFW* algorithm.

In Algorithm \ref{alg:2}, we apply the variance-reduced technique
of STORM to estimate
the zeroth-order stochastic gradients, and update the parameters $\{x,y,z\}$ as in Algorithm \ref{alg:1}.
Specifically, we use the zeroth-order stochastic gradients as follows:
\vspace*{-4pt}
\begin{align}
  v_t &\!=\! \rho_t\underbrace{\hat{\nabla} f_{\xi_t}(z_t)}_{\mbox{SGD}} \!+\! (1-\rho_t)\big(\underbrace{\hat{\nabla} f_{\xi_t}(z_t) \!-\!\hat{\nabla} f_{\xi_t}(z_{t-1}) \!+\! v_{t-1}}_{\mbox{SPIDER}}\big) \nonumber \\
  & = \hat{\nabla} f_{\xi_t}(z_t) + (1-\rho_t)\big( v_{t-1} - \hat{\nabla} f_{\xi_t}(z_{t-1})\big),
\end{align}
where $\rho_t\in (0,1]$.
Recently, \cite{zhang2019one,xie2019stochastic} have been applied this variance-reduced technique of STORM
to the Frank-Wolfe algorithms.
However, these algorithms strictly rely on the unbiased stochastic gradient.
To the best of our knowledge, we are the first to apply the STORM to the zeroth-order algorithm,
which does not rely on the unbiased stochastic gradient.
\section{Convergence Analysis}
In the section, we study the convergence properties of
both the Acc-SZOFW and Acc-SZOFW* algorithms. \emph{All related proofs are provided in the supplementary document.}
Throughout the paper, $\|\cdot\|$ denotes the vector $\ell_2$ norm and the matrix spectral norm, respectively.
Without loss of generality, let $\alpha_t = \frac{1}{t+1}$,
$\gamma_t = (1+\theta_t)\eta_t$ with $\theta_t = \frac{1}{(t+1)(t+2)}$ in our algorithms.
\subsection{Convergence Properties of Acc-SZOFW Algorithm}
In this subsection, we study the convergence properties of the Acc-SZOFW Algorithm based on the CooGE and UniGE zeroth-order gradients,
respectively.
The detailed proofs are provided in the Appendix \ref{Appendix:A1}.

We first study the convergence properties of the deterministic \textbf{Acc-ZO-FW} algorithm as a baseline,
which is Algorithm \ref{alg:1} using the \textbf{deterministic} zeroth-order gradient $v_t=\frac{1}{n}\sum_{i=1}^n \hat{\nabla}_{coo} f_i(z_t)$ for solving the \textbf{finite-sum} problem \eqref{eq:1}.
\subsubsection{ Deterministic Acc-ZO-FW Algorithm }
\begin{theorem} \label{th:01}
Suppose $\{x_t,y_t,z_t\}^{T-1}_{t=0}$ be generated from Algorithm \ref{alg:1} by using the \textbf{deterministic} zeroth-order gradient $v_t=\frac{1}{n}\sum_{i=1}^n \hat{\nabla}_{coo} f_i(z_t)$,
and let $\alpha_t = \frac{1}{t+1}$, $\theta_t = \frac{1}{(t+1)(t+2)}$,
$\gamma_t = (1+\theta_t)\eta_t$, $\eta = \eta_t= T^{-\frac{1}{2}}$, $\mu=d^{-\frac{1}{2}}T^{-\frac{1}{2}}$,
then we have
\begin{align}
 \mathbb{E}[\mathcal{G}(z_\zeta)] = \frac{1}{T}\sum_{t=1}^{T-1}\mathcal{G}(z_{t}) \leq O(\frac{1}{T^{\frac{1}{2}}}) + O(\frac{\ln(T)}{T^{\frac{3}{2}}}),  \nonumber
\end{align}
where $z_{\zeta}$ is chosen uniformly randomly from $\{z_t\}_{t=0}^{T-1}$.
\end{theorem}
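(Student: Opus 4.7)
The plan is to establish the bound on $\mathbb{E}[\mathcal{G}(z_\zeta)]$ by combining three ingredients: a bias estimate for the coordinate-wise zeroth-order gradient $v_t$, a one-step descent inequality obtained from the $L$-smoothness of $f$ applied to the $y$-update, and a Lyapunov-style coupling that carries the descent from $y_{t+1}$ over to $z_{t+1}$ through the momentum parameters $\alpha_{t+1}, \gamma_t, \eta_t$. Once a per-iteration inequality of the form $f(z_{t+1}) \leq f(z_t) - \eta_t \mathcal{G}(z_t) + R_t$ is in hand, telescoping and dividing by $T\eta = \sqrt{T}$ delivers the claimed rate.

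First I would establish that $\|v_t - \nabla f(z_t)\| = O(L\mu\sqrt{d})$. A second-order Taylor expansion around each evaluation point together with the $L$-smoothness of $f_i$ gives $|(\hat{\nabla}_{coo} f_i(z_t) - \nabla f_i(z_t))_j| \leq L\mu/2$ coordinate-wise, hence $\|\hat{\nabla}_{coo} f_i(z_t) - \nabla f_i(z_t)\| \leq L\mu\sqrt{d}/2$, and averaging over $i$ preserves this bound. With $\mu = d^{-1/2} T^{-1/2}$, the bias is $O(LT^{-1/2})$. Next I would apply $L$-smoothness to the $y$-update $y_{t+1} = z_t + \eta_t(w_t - z_t)$ and use the optimality of $w_t$ as $\arg\max_w \langle w, -v_t\rangle$ together with Cauchy--Schwarz and the diameter bound $D$ to derive
\begin{align*}
 f(y_{t+1}) \leq f(z_t) - \eta_t \mathcal{G}(z_t) + 2\eta_t D \|v_t - \nabla f(z_t)\| + \tfrac{L\eta_t^2 D^2}{2}.
\end{align*}

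The remaining piece is to relate $f(z_{t+1})$ to $f(y_{t+1})$ through $z_{t+1} = (1-\alpha_{t+1})y_{t+1} + \alpha_{t+1}x_{t+1}$. Using $L$-smoothness a second time yields an extra contribution of $\alpha_{t+1}\langle \nabla f(y_{t+1}), x_{t+1} - y_{t+1}\rangle + \tfrac{L\alpha_{t+1}^2}{2}\|x_{t+1}-y_{t+1}\|^2$, and I would exploit the defining choice $\gamma_t = (1+\theta_t)\eta_t$ with $\theta_t = 1/((t+1)(t+2))$ so that the residual coefficient $\alpha_{t+1}(\gamma_t - \eta_t)$ is of order $\theta_t/(t+1)$, which telescopes the $x_{t+1} - y_{t+1}$ drift against the previous iterates $\{w_i - z_i\}_{i<t}$ in the explicit expansion displayed just before the Acc-SZOFW$^*$ subsection. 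After telescoping from $t=0$ to $T-1$, the $f(z_0) - f(z_T)$ gap is bounded by $\triangle$, the bias term contributes $\sum_t \eta_t \cdot O(LT^{-1/2}) = O(1)$, the quadratic step-size term contributes $LD^2 T\eta^2/2 = O(1)$, and the $\theta_t$ overhead contributes $O(\ln T)$ once scaled by $\eta_t$; dividing through by $T\eta = \sqrt{T}$ gives the leading $O(T^{-1/2})$ and the lower-order $O(\ln(T)/T^{3/2})$ as stated.

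The main obstacle is the third step: constructing (or equivalently verifying) a Lyapunov function that makes the momentum coupling produce only a logarithmic overhead. The danger is that $\langle \nabla f(y_{t+1}), x_{t+1} - y_{t+1}\rangle$ has no obvious sign and $\|x_{t+1} - y_{t+1}\|$ need not be small, so any crude bound will dominate the $-\eta_t \mathcal{G}(z_t)$ term. Showing that the carefully tuned schedule $\alpha_t, \theta_t, \gamma_t$ causes these cross-terms to collapse into a summable tail is the core technical contribution of Theorem \ref{th:01} and is where I would expect to invest the most effort.
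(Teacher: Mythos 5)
Your overall architecture matches the paper's: a CooGE bias bound $\|v_t-\nabla f(z_t)\|\le L\sqrt{d}\mu$, the smoothness/Frank--Wolfe-gap inequality $\langle \nabla f(z_t), w_t-z_t\rangle \le D\|\nabla f(z_t)-v_t\|-\mathcal{G}(z_t)$, and a per-iteration descent inequality that is telescoped and divided by $T\eta$. Your first two ingredients are correct and coincide with the paper's equations for the gap term and with its Lemma on the coordinate-wise estimator.

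The genuine gap is that your third step --- the momentum coupling --- is not actually carried out: you describe it as ``the main obstacle'' and say it is ``where I would expect to invest the most effort,'' but a proof of the theorem must close exactly this step, and your sketch of it is also aimed at the wrong quantity. You propose to expand $f(z_{t+1})$ around $y_{t+1}$, which introduces $\langle \nabla f(y_{t+1}), x_{t+1}-y_{t+1}\rangle$ at a point $y_{t+1}$ where no gap or gradient information is available, so the recursion does not close. The paper instead expands $f(z_{t+1})$ around $z_t$ and splits $z_{t+1}-z_t$ into a $(w_t-z_t)$-direction (absorbed by the gap inequality) and a residual $\alpha_{t+1}(1-\gamma_t)\langle \nabla f(z_t), x_t-z_t\rangle$. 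The decisive facts, which your proposal never establishes, are (i) the closed-form recursion $z_{t+1}-x_{t+1}=(1-\alpha_{t+1})(1-\gamma_t)(z_t-x_t)+(1-\alpha_{t+1})(\eta_t-\gamma_t)(w_t-z_t)$, which together with $\sum_{i\ge 0}\theta_i\le 1$ yields the uniform bounds $\|x_t-z_t\|\le \eta D$ and $\|z_{t+1}-z_t\|\le 2\eta D$ (Lemma \ref{lem:A1}); and (ii) a recursion on $\langle \nabla f(z_t), x_t-z_t\rangle$ itself, obtained by swapping $\nabla f(z_t)$ for $\nabla f(z_{t-1})$ at cost $2L\eta^2D^2$ per step and reusing the gap inequality on the $(w_{t-1}-z_{t-1})$-component, giving $\langle \nabla f(z_t), x_t-z_t\rangle \le 2tL\eta^2D^2+\eta D\sum_{i<t}\theta_i\|\nabla f(z_i)-v_i\|$. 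There is no sign condition or Lyapunov construction needed: the factor $\alpha_{t+1}=\frac{1}{t+2}$ kills the linear growth $2tL\eta^2D^2$, and the double sum $\sum_t\alpha_{t+1}\sum_{i<t}\theta_i\|\nabla f(z_i)-v_i\|$ is rearranged and bounded by $\ln(T)$ via $\sum_{i=t+1}^{T-1}\alpha_{i+1}\le \ln(T/(t+1))$, which is where the lower-order $O(\ln(T)/T^{3/2})$ term comes from (the extra $T^{-1}$ beyond your ``$O(\ln T)$ scaled by $\eta_t$'' accounting is supplied by the bias factor $L\sqrt{d}\mu=LT^{-1/2}$). Without items (i) and (ii) your plan does not yet constitute a proof.
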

\begin{remark}
 Theorem \ref{th:01} shows that the deterministic Acc-ZO-FW algorithm under the \textbf{CooGE} has $O(T^{-\frac{1}{2}} )$ convergence rate. The Acc-ZO-FW algorithm needs $nd$ samples
 to estimate the zeroth-order gradient $v_t$ at each iteration. For finding an $\epsilon$-stationary point, i.e.,
 $\mathbb{E}[\mathcal{G}(z_\zeta)] \leq \epsilon$, by $T^{-\frac{1}{2}} \leq \epsilon$, we choose $T=\epsilon^{-2}$.
 Thus the \textbf{deterministic} Acc-ZO-FW has the function query complexity of $ndT=O(dn\epsilon^{-2})$.
 Comparing with the existing deterministic zeroth-order Frank-Wolfe algorithm, \emph{i.e.,} FW-Black \cite{chen2018frank},
 our Acc-ZO-FW algorithm has a lower query complexity of $ndT=O(dn\epsilon^{-2})$, which improves the existing result by a factor of $O(\epsilon^{-2})$ (please see Table \ref{tab:1}).
\end{remark}
\subsubsection{ Acc-SZOFW (CooGE) Algorithm }
\begin{lemma}
 Suppose the zeroth-order stochastic gradient $v_t $ be generated from Algorithm \ref{alg:1} by using the \textbf{CooGE} zeroth-order gradient estimator.
 Let $\alpha_t = \frac{1}{t+1}$, $\theta_t = \frac{1}{(t+1)(t+2)}$ and
 $\gamma_t = (1+\theta_t)\eta_t$ in Algorithm \ref{alg:1}.
 For the \textbf{finite-sum} setting, we have
 \begin{align}
  \mathbb{E} \|\nabla f(z_t)-v_t\|  \leq L\sqrt{d}\mu + \frac{L(\sqrt{6d}\mu + 2\sqrt{3D}\eta)}{\sqrt{b/q}}. \nonumber
 \end{align}
 For the \textbf{stochastic} setting, we have
 \begin{align}
  \mathbb{E} \|\nabla f(z_t)-v_t\|  & \leq L\sqrt{d}\mu + \frac{L(\sqrt{6d}\mu + 2\sqrt{3D}\eta)}{\sqrt{b_2/q}} \nonumber \\
   & \quad + \frac{\sqrt{3}\sigma_1}{\sqrt{b_1}} + \sqrt{6d}L\mu.  \nonumber
 \end{align}
\end{lemma}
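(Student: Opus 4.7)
The plan is to decompose the error by triangle inequality into a deterministic CooGE bias and a stochastic SPIDER-type variance, then control each piece separately.

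First, I would write
\begin{align}
\mathbb{E}\|\nabla f(z_t)-v_t\| \leq \mathbb{E}\|\nabla f(z_t)-\hat{\nabla}_{coo}f(z_t)\| + \mathbb{E}\|\hat{\nabla}_{coo}f(z_t)-v_t\|. \nonumber
\end{align}
The first term is deterministic and bounded by the standard CooGE bias estimate $\|\hat{\nabla}_{coo}f(x)-\nabla f(x)\|\le L\sqrt{d}\,\mu$, which follows from Assumption 1 by applying the univariate $L$-smoothness bound coordinate-wise and summing $d$ squared contributions. This accounts for the leading $L\sqrt{d}\,\mu$ in the finite-sum bound and one of the $L\sqrt{d}\mu$-type terms in the stochastic bound.

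Next, for the second term I would set $n_t = q\lfloor t/q\rfloor$, the most recent refresh index, and expand the SPIDER/SpiderBoost recursion as a telescoping martingale:
\begin{align}
\hat{\nabla}_{coo}f(z_t)-v_t &= \big(\hat{\nabla}_{coo}f(z_{n_t})-v_{n_t}\big) \nonumber \\
&\quad + \sum_{s=n_t+1}^{t} \Delta_s, \nonumber
\end{align}
where $\Delta_s = \big(\hat{\nabla}_{coo}f(z_s)-\hat{\nabla}_{coo}f(z_{s-1})\big) - \frac{1}{b}\sum_{j\in\mathcal{B}_s}\big(\hat{\nabla}_{coo}f_j(z_s)-\hat{\nabla}_{coo}f_j(z_{s-1})\big)$ is mean-zero given the history. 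Orthogonality across $s$ then gives
\begin{align}
\mathbb{E}\|\hat{\nabla}_{coo}f(z_t)-v_t\|^2 = \mathbb{E}\|\hat{\nabla}_{coo}f(z_{n_t})-v_{n_t}\|^2 + \sum_{s=n_t+1}^{t}\mathbb{E}\|\Delta_s\|^2. \nonumber
\end{align}
Each $\mathbb{E}\|\Delta_s\|^2$ is upper bounded by $\frac{1}{b}\mathbb{E}\|\hat{\nabla}_{coo}f_j(z_s)-\hat{\nabla}_{coo}f_j(z_{s-1})\|^2$, and I would close this with the CooGE Lipschitz-type inequality $\|\hat{\nabla}_{coo}f_j(x)-\hat{\nabla}_{coo}f_j(y)\|^2 \le c_1 L^2\|x-y\|^2 + c_2 L^2 d\mu^2$, obtained from writing each coordinate-wise central difference as the true partial derivative plus an $O(L\mu)$ bias and applying $L$-smoothness.

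I then need a displacement bound $\mathbb{E}\|z_s-z_{s-1}\|^2 \le C\eta^2 D^2$. From the updates $y_s=z_{s-1}+\eta_{s-1}(w_{s-1}-z_{s-1})$, $x_s=x_{s-1}+\gamma_{s-1}(w_{s-1}-x_{s-1})$, and $z_s=(1-\alpha_s)y_s+\alpha_s x_s$, with $\gamma_s=(1+\theta_s)\eta_s$ and $\theta_s,\alpha_s=O(1/t)$ small, one can express $z_s-z_{s-1}$ as a convex-combination-style step of size $O(\eta)$ inside $\mathcal{X}$, hence bounded by $O(\eta D)$ using Assumption 3. With $t-n_t \le q$, summation yields
\begin{align}
\mathbb{E}\|\hat{\nabla}_{coo}f(z_t)-v_t\|^2 \le \mathbb{E}\|\hat{\nabla}_{coo}f(z_{n_t})-v_{n_t}\|^2 + \tfrac{q}{b}\big(c_1 L^2\eta^2 D^2 + c_2 L^2 d\mu^2\big), \nonumber
\end{align}
and Jensen's inequality converts this into the claimed form $\frac{L(\sqrt{6d}\mu + 2\sqrt{3}D\eta)}{\sqrt{b/q}}$. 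For the finite-sum case the first term vanishes since $v_{n_t}=\hat{\nabla}_{coo}f(z_{n_t})$ exactly, completing that bound. For the stochastic case, $v_{n_t}$ is an $b_1$-sample mini-batch of CooGE, so I would further split
$\|v_{n_t}-\nabla f(z_{n_t})\| \le \|v_{n_t}-\hat{\nabla}_{coo}f_{\mathcal{B}_1}^{\mathrm{true-mean}}\| + \text{CooGE sampling bias}$
and use Assumption 2 together with the $L\sqrt{d}\mu$ bias to obtain the extra $\frac{\sqrt{3}\sigma_1}{\sqrt{b_1}}+\sqrt{6d}L\mu$ term.

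The main obstacle is establishing the correct CooGE Lipschitz-type inequality $\|\hat{\nabla}_{coo}f_j(x)-\hat{\nabla}_{coo}f_j(y)\|^2 \le O(L^2\|x-y\|^2 + L^2 d\mu^2)$ with the right $d$-dependence, since the naive bound would only give a $d L^2\|x-y\|^2$ factor; careful use of the central-difference identity per coordinate is needed. A secondary technical nuisance is the displacement bound $\|z_s-z_{s-1}\|\lesssim \eta D$ given the three-sequence momentum coupling, which requires unfolding the $z$-update in terms of $\eta_{s-1},\gamma_{s-1},\alpha_s$ and using $\theta_s\ll 1$ to fold $\gamma$ into $\eta$ up to constants.
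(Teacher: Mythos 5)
Your proposal follows essentially the same route as the paper's proof: the triangle-inequality split into the CooGE bias $L\sqrt{d}\mu$ plus the deviation $\mathbb{E}\|\hat{\nabla}_{coo}f(z_t)-v_t\|$, the martingale-orthogonality unrolling of the SPIDER recursion over the at most $q$ steps since the last refresh, the per-sample bound $\|\hat{\nabla}_{coo}f_j(z_s)-\hat{\nabla}_{coo}f_j(z_{s-1})\|^2\le 3L^2\|z_s-z_{s-1}\|^2+6L^2d\mu^2$ via the bias decomposition, the displacement bound $\|z_s-z_{s-1}\|\le 2\eta D$ from unfolding the three-sequence update, and the refresh-point term (zero for finite-sum; $3\sigma_1^2/b_1+6L^2d\mu^2$ for the stochastic setting via Assumption 2 plus the CooGE bias), finished by Jensen and subadditivity of the square root. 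The plan is correct and matches the paper's argument in all essential steps.
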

\begin{theorem} \label{th:1}
Suppose $\{x_t,y_t,z_t\}^{T-1}_{t=0}$ be generated from Algorithm \ref{alg:1} by using the \textbf{CooGE} zeroth-order gradient estimator,
and let $\alpha_t = \frac{1}{t+1}$, $\theta_t = \frac{1}{(t+1)(t+2)}$,
$\gamma_t = (1+\theta_t)\eta_t$, $\eta = \eta_t= T^{-\frac{1}{2}}$, $\mu=d^{-\frac{1}{2}}T^{-\frac{1}{2}}$, $b=q$, or $b_2 = q$ and $b_1 = T$,
then we have
\begin{align}
\mathbb{E}[\mathcal{G}(z_\zeta)] = \frac{1}{T}\sum_{t=1}^{T-1}\mathbb{E} [\mathcal{G}(z_{t})] \leq O(\frac{1}{T^{\frac{1}{2}}}) + O(\frac{\ln(T)}{T^{\frac{3}{2}}}), \nonumber
\end{align}
where $z_{\zeta}$ is chosen uniformly randomly from $\{z_t\}_{t=0}^{T-1}$.
\end{theorem}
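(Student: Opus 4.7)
My plan is to follow the usual nonconvex Frank--Wolfe template, augmented with a careful treatment of the three-variable momentum dynamics and a plug-in of Lemma 1 for the zeroth-order estimation error. First I would invoke the $L$-smoothness of $f$ at the $y$-update, writing
\begin{equation*}
f(y_{t+1}) \leq f(z_t) + \langle \nabla f(z_t), y_{t+1} - z_t \rangle + \tfrac{L}{2}\|y_{t+1} - z_t\|^2,
\end{equation*}
and use $y_{t+1} - z_t = \eta_t(w_t - z_t)$ together with the optimality of $w_t$ for the linear subproblem against $v_t$. The standard trick is to introduce $u_t^* \in \arg\max_{u\in\mathcal{X}}\langle u - z_t, -\nabla f(z_t)\rangle$, whose existence gives $\mathcal{G}(z_t) = \langle u_t^* - z_t, -\nabla f(z_t)\rangle$, and then write
\begin{equation*}
\langle v_t, w_t - z_t\rangle \leq \langle v_t, u_t^* - z_t\rangle \leq -\mathcal{G}(z_t) + \|v_t - \nabla f(z_t)\|\cdot D,
\end{equation*}
by Cauchy--Schwarz and Assumption 3. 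Splitting the original inner product into $\langle v_t, \cdot\rangle$ and $\langle \nabla f(z_t) - v_t, \cdot\rangle$ and bounding both by $D\|v_t - \nabla f(z_t)\|$ yields
\begin{equation*}
f(y_{t+1}) \leq f(z_t) - \eta_t \mathcal{G}(z_t) + 2\eta_t D\,\|\nabla f(z_t) - v_t\| + \tfrac{L D^2}{2}\eta_t^2.
\end{equation*}

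Next I would pass from $f(y_{t+1})$ to $f(z_{t+1})$ using $z_{t+1} - y_{t+1} = \alpha_{t+1}(x_{t+1} - y_{t+1})$ and apply $L$-smoothness once more. The purpose of the specific choice $\gamma_t = (1+\theta_t)\eta_t$ with $\theta_t = 1/((t+1)(t+2))$ is precisely to control the cross terms $x_{t+1} - y_{t+1}$: expanding using the $x$-update and $y$-update recursions one finds $x_{t+1} - y_{t+1}$ is bounded by a telescoping quantity on the order of $\eta_t \theta_t D$ plus a sum of previous increments that collapses under the chosen $\alpha_t = 1/(t+1)$. Thus the overhead term $|\langle \nabla f(y_{t+1}), z_{t+1} - y_{t+1}\rangle| + \tfrac{L}{2}\|z_{t+1} - y_{t+1}\|^2$ is of size $O(\eta_t \theta_t D^2)$ at step $t$, and summing over $t$ produces $O(\sum_t 1/((t+1)(t+2))\cdot \eta) = O(\eta)$ rather than $O(T\eta)$, i.e.\ a harmonic-type contribution giving the $\ln(T)/T^{3/2}$ term.

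Combining these two inequalities I obtain a per-step decrease of the form
\begin{equation*}
\eta_t\,\mathcal{G}(z_t) \leq f(z_t) - f(z_{t+1}) + 2\eta_t D\,\|\nabla f(z_t) - v_t\| + C_1 L D^2 \eta_t^2 + C_2 L D^2 \eta_t \theta_t,
\end{equation*}
for absolute constants $C_1, C_2$. Taking expectations, applying Lemma 1 to the stochastic setting (with $b_2 = q$ and $b_1 = T$) or its finite-sum analogue (with $b = q$) to get $\mathbb{E}\|\nabla f(z_t) - v_t\| = O(L\sqrt{d}\,\mu + L\eta + \sigma_1/\sqrt{b_1})$, and plugging $\mu = d^{-1/2}T^{-1/2}$, $\eta = T^{-1/2}$ shows each error factor is $O(T^{-1/2})$. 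Summing from $t=0$ to $T-1$ and telescoping via Assumption 4 gives
\begin{equation*}
\eta \sum_{t=0}^{T-1}\mathbb{E}[\mathcal{G}(z_t)] \leq \triangle + O(T\eta \cdot T^{-1/2}) + O(T\eta^2) + O(\eta\ln T).
\end{equation*}
Dividing by $T\eta = T^{1/2}$ produces the claimed bound $O(T^{-1/2}) + O(\ln(T)/T^{3/2})$, and the uniform choice of $\zeta$ converts the average into $\mathbb{E}[\mathcal{G}(z_\zeta)]$.

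The main obstacle I anticipate is Step 3: correctly bookkeeping the $x_{t+1} - y_{t+1}$ difference so that the momentum correction is not absorbed into the leading term but genuinely contributes only at the $O(\ln T/T^{3/2})$ order. This requires using the explicit expansion of $z_{t+1} - z_t$ as a linear combination of past $(w_i - z_i)$ (as sketched in the paper's display) together with $\gamma_i - \eta_i = \theta_i \eta_i$ and the rapid decay of $\theta_i$; any looser estimate would leak a factor of $T$ into what should be a harmonic sum and degrade the final rate.
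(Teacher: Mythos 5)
Your overall template --- smoothness, the Frank--Wolfe gap bound $\langle \nabla f(z_t), w_t-z_t\rangle \leq D\|\nabla f(z_t)-v_t\|-\mathcal{G}(z_t)$, plugging in Lemma~1 with $b=q$ (or $b_2=q$, $b_1=T$) so that $\mathbb{E}\|\nabla f(z_t)-v_t\|=O(T^{-1/2})$, and telescoping via Assumption~4 --- matches the paper. The genuine difference is how the momentum correction is handled. The paper applies smoothness once between $z_t$ and $z_{t+1}$, which produces the term $\alpha_{t+1}(1-\gamma_t)\langle\nabla f(z_t), x_t-z_t\rangle$, and then bounds this \emph{inner product} by a recursion: $\langle\nabla f(z_t),x_t-z_t\rangle \leq 2tL\eta^2D^2+\eta D\sum_{i=0}^{t-1}\theta_i\|\nabla f(z_i)-v_i\|$. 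Because each summand carries the gradient-error factor $\|\nabla f(z_i)-v_i\|=O(T^{-1/2})$, the resulting double sum is $O(\ln(T)/T^{3/2})$ after normalization, and no bound on $\|\nabla f\|$ is ever needed. You instead apply smoothness twice ($z_t\to y_{t+1}$, then $y_{t+1}\to z_{t+1}$) and try to kill $\langle\nabla f(y_{t+1}), z_{t+1}-y_{t+1}\rangle$ by a pure norm estimate.

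Here is the concrete gap. From $x_{t+1}-y_{t+1}=(1-\gamma_t)(x_t-z_t)+\theta_t\eta_t(w_t-z_t)$ and Lemma~\ref{lem:A1}, one only gets $\|x_{t+1}-y_{t+1}\|\leq \eta D\sum_{i\le t}\theta_i+\theta_t\eta D = O(\eta D)$: the sum $\sum_i\theta_i$ is $O(1)$, it does \emph{not} make the accumulated displacement shrink like $\theta_t$. So your claimed per-step overhead $O(\eta_t\theta_t D^2)$ is too optimistic; the correct size of $\|z_{t+1}-y_{t+1}\|$ is $O(\alpha_{t+1}\eta D)$ with $\alpha_{t+1}\sim 1/t$, and the linear term $\langle\nabla f(y_{t+1}),z_{t+1}-y_{t+1}\rangle$ then requires a uniform bound $G=\sup_{x\in\mathcal{X}}\|\nabla f(x)\|$, which is not among the stated assumptions (it is derivable from compactness of $\mathcal{X}$ plus $L$-smoothness, but you must say so). With that patch the overhead sums to $O(G\eta D\ln T)$, i.e.\ $O(\ln T/T)$ after dividing by $T\eta$ --- still $o(T^{-1/2})$, so the theorem's leading rate survives, but neither your stated mechanism (``the sum of previous increments collapses'') nor your attribution of the $\ln(T)/T^{3/2}$ term is accurate. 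To recover the paper's sharper second-order term, or to avoid the gradient bound altogether, you would have to keep $x_t-z_t$ paired with $\nabla f(z_t)$ inside an inner product and run the paper's recursion rather than passing to norms.
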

\begin{remark}
 Theorem \ref{th:1} shows that the Acc-SZOFW (CooGE) algorithm has convergence rate of $O(T^{-\frac{1}{2}} )$ . When $\mod(t,q)=0$, the Acc-SZOFW algorithm needs $nd$ or $b_1d$ samples
 to estimate the zeroth-order gradient $v_t$ at each iteration and needs $T/q$ iterations, otherwise it needs $2bd$ or $2b_2d$ samples to estimate $v_t$ at each iteration and
 needs $T$ iterations. In the \textbf{finite-sum }setting, by $T^{-\frac{1}{2}} \leq \epsilon$, we choose $T=\epsilon^{-2}$, and let $b=q=\sqrt{n}$, the Acc-SZOFW has the function query complexity of $dnT/q+2dbT=O(d\sqrt{n}\epsilon^{-2})$
 for finding an $\epsilon$-stationary point. In the \textbf{stochastic} setting, let $b_2=q=\epsilon^{-1}$ and $b_1=T=\epsilon^{-2}$,
 the Acc-SZOFW has the function query complexity of $db_1T/q+2db_2T=O(d\epsilon^{-3})$ for finding an $\epsilon$-stationary point.
\end{remark}
\subsubsection{ Acc-SZOFW (UniGE) Algorithm }
\begin{lemma}
 Suppose the zeroth-order stochastic gradient $v_t$ be generated from Algorithm \ref{alg:1} by using the \textbf{UniGE} zeroth-order gradient estimator.
 Let $\alpha_t = \frac{1}{t+1}$, $\theta_t = \frac{1}{(t+1)(t+2)}$ and
 $\gamma_t = (1+\theta_t)\eta_t$ in Algorithm \ref{alg:1}.
 For the \textbf{stochastic} setting, we have
 \begin{align}
  \mathbb{E} \|\nabla f(z_t)-v_t\|  \leq \frac{\beta L d}{2} + \frac{L(\sqrt{3}d\beta + 2\sqrt{6Dd}\eta)}{\sqrt{2b_2/q}} +  \frac{\sigma_2}{\sqrt{b_1}}. \nonumber
 \end{align}
\end{lemma}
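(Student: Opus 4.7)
The plan is to decompose the total error into a deterministic smoothing bias and a stochastic variance term, where the variance term is controlled by the standard SPIDER telescoping argument adapted to the UniGE estimator. Since the UniGE is unbiased for the smoothed function $f_\beta$ (i.e.\ $\mathbb{E}_{(u,\xi)}[\hat{\nabla}_{uni}f_\xi(x)] = \nabla f_\beta(x)$) rather than for $f$ itself, I first write
\begin{equation*}
\mathbb{E}\|\nabla f(z_t) - v_t\| \le \|\nabla f(z_t) - \nabla f_\beta(z_t)\| + \mathbb{E}\|v_t - \nabla f_\beta(z_t)\|
\end{equation*}
and handle the two pieces separately. The first piece is the standard UniGE smoothing bias, which under Assumption 1 is at most $\beta L d/2$, producing the leading term of the bound.

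For the variance term, let $t_0 = q\lfloor t/q\rfloor$ denote the last restart. At $t_0$ the estimator is a fresh mini-batch average of size $b_1$, so Assumption 2 gives $\mathbb{E}\|v_{t_0} - \nabla f_\beta(z_{t_0})\|^2 \le \sigma_2^2/b_1$, which upon taking square root yields the $\sigma_2/\sqrt{b_1}$ contribution. For $t > t_0$, the SPIDER update can be rewritten as $v_t - \nabla f_\beta(z_t) = v_{t-1} - \nabla f_\beta(z_{t-1}) + \Delta_t$, where $\Delta_t$ is the mean-zero (conditional on $\mathcal{F}_{t-1}$) centered average of the stochastic increments $\hat{\nabla}_{uni}f_{\xi_j}(z_t) - \hat{\nabla}_{uni}f_{\xi_j}(z_{t-1})$. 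Orthogonality of these martingale differences yields the recursion
\begin{equation*}
\mathbb{E}\|v_t - \nabla f_\beta(z_t)\|^2 \le \mathbb{E}\|v_{t-1} - \nabla f_\beta(z_{t-1})\|^2 + \tfrac{1}{b_2}\mathbb{E}\|\hat{\nabla}_{uni}f_{\xi}(z_t) - \hat{\nabla}_{uni}f_{\xi}(z_{t-1})\|^2,
\end{equation*}
which I iterate over at most $q$ steps to collect a factor of $q/b_2$.

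The main obstacle is bounding the per-sample increment $\mathbb{E}\|\hat{\nabla}_{uni}f_\xi(z_t) - \hat{\nabla}_{uni}f_\xi(z_{t-1})\|^2$ for UniGE, because the $d/\beta$ prefactor in the estimator inflates constants in a $d$-dependent way. I will split it as $\hat{\nabla}_{uni}f_\xi(z) - \nabla f_{\xi,\beta}(z)$ plus $\nabla f_{\xi,\beta}(z_t) - \nabla f_{\xi,\beta}(z_{t-1})$ (modulo signs), use the standard UniGE second-moment bound $\mathbb{E}\|\hat{\nabla}_{uni}f_\xi(x) - \nabla f_{\xi,\beta}(x)\|^2 \lesssim d\|\nabla f_\xi(x)\|^2 + L^2 d^2 \beta^2$, and the $L$-smoothness of $f_\xi$ together with $f_{\xi,\beta}$, to obtain something of the form $\le \tfrac{1}{2}\bigl(3L^2 d^2\beta^2 + 24L^2 D d\,\eta^2\bigr)$. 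The $\eta^2$ factor enters because from the $(x,y,z)$ updates one has $\|z_t - z_{t-1}\| = O(\eta D)$: indeed $z_t - z_{t-1}$ is a convex combination of $\eta_{t-1}(w_{t-1}-z_{t-1})$ and $\gamma_{t-1}(w_{t-1}-x_{t-1})$ plus lower-order $\alpha_t$-terms, all of which lie in $\mathcal{X}$ and hence have length at most $O(D)$ by Assumption 3.

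Putting the pieces together, summing the $q$ recursion terms, and taking the square root using $\sqrt{a+b}\le\sqrt{a}+\sqrt{b}$ converts $(3L^2 d^2\beta^2 + 24 L^2 Dd\,\eta^2)\,q/(2b_2)$ into the stated $L(\sqrt{3}d\beta + 2\sqrt{6Dd}\,\eta)/\sqrt{2b_2/q}$. Combining with the bias and the initial-batch variance completes the bound. Conceptually the proof closely mirrors the CooGE lemma stated earlier; the delicate step is tracking the correct $d$-dependence of the UniGE variance constants, which is where both the factor $d\beta$ (vs.\ $\sqrt{d}\mu$) and the factor $\sqrt{Dd}$ (vs.\ $\sqrt{D}$) originate.
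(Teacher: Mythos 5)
Your overall architecture is exactly the paper's: triangle inequality through the smoothed gradient $\nabla f_\beta(z_t)$ (bias $\le \beta L d/2$ by the standard UniGE lemma), the martingale-orthogonality SPIDER recursion accumulating $q/b_2$ times the per-sample increment, the fresh-batch bound $\sigma_2^2/b_1$ at each restart via Assumption 2, the $O(\eta D)$ bound on $\|z_t - z_{t-1}\|$ from the three-sequence update, and finally Jensen plus $\sqrt{a+b+c}\le\sqrt a+\sqrt b+\sqrt c$. All of that matches the paper's proof step for step, and your target for the per-sample increment, $\tfrac12\bigl(3L^2d^2\beta^2 + 24L^2Dd\,\eta^2\bigr)$, is precisely the quantity the paper plugs into the recursion.

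The one step that would fail as sketched is how you propose to reach that target. You split $\hat{\nabla}_{uni}f_\xi(z_t)-\hat{\nabla}_{uni}f_\xi(z_{t-1})$ through $\nabla f_{\xi,\beta}$ at each endpoint and invoke the second-moment bound $\mathbb{E}\|\hat{\nabla}_{uni}f_\xi(x)-\nabla f_{\xi,\beta}(x)\|^2 \lesssim d\|\nabla f_\xi(x)\|^2 + L^2d^2\beta^2$. That leaves uncancelled terms of order $d\,\|\nabla f_\xi(z_t)\|^2 + d\,\|\nabla f_\xi(z_{t-1})\|^2$, which do not vanish as $\beta,\eta\to 0$; accumulated over $q$ steps of the recursion they would destroy the variance reduction entirely, and no assumption in the paper lets you absorb them. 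The paper instead uses a dedicated difference bound (its Lemma 13, an extension of Lemma 5 of Ji et al.): $\mathbb{E}\|\hat{\nabla}_{uni}f_j(x)-\hat{\nabla}_{uni}f_j(y)\|^2 \le 3dL^2\|x-y\|^2 + \tfrac{3L^2d^2\beta^2}{2}$, which is proved by writing both estimators with the \emph{same} random direction $u$, so that the difference is $\tfrac{d}{\beta}\bigl[(f_j(x+\beta u)-f_j(x))-(f_j(y+\beta u)-f_j(y))\bigr]u$ and $L$-smoothness of $f_j$ controls the bracket directly; the gradient-norm terms never appear. You need this coupled-difference bound (or an equivalent derivation exploiting the shared $u$), not an endpoint-wise variance bound, to land on $\tfrac{3L^2d^2\beta^2}{2} + 12L^2Dd\,\eta^2$. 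With that substitution the rest of your argument goes through unchanged.
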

\begin{theorem} \label{th:2}
Suppose $\{x_t,y_t,z_t\}^{T-1}_{t=0}$ be generated from Algorithm \ref{alg:1} by using the \textbf{UniGE} zeroth-order gradient estimator,
and let $\alpha_t = \frac{1}{t+1}$, $\theta_t = \frac{1}{(t+1)(t+2)}$,
$\gamma_t = (1+\theta_t)\eta_t$, $\eta = \eta_t= T^{-\frac{1}{2}}$, $\beta=d^{-1}T^{-\frac{1}{2}}$, $b_2 = q$, and $b_1 = T/d$,
then we have
\begin{align}
\mathbb{E}[\mathcal{G}(z_\zeta)] = \frac{1}{T}\sum_{t=1}^{T-1}\mathbb{E} [\mathcal{G}(z_{t})] \leq O(\frac{\sqrt{d}}{T^{\frac{1}{2}}}) + O(\frac{\sqrt{d}\ln(T)}{T^{\frac{3}{2}}}),  \nonumber
\end{align}
\end{theorem}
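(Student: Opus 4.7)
The plan is to mirror the proof of Theorem \ref{th:1}, replacing every appearance of the CooGE variance bound by its UniGE analogue from the preceding lemma, and then re-optimize the parameter dependencies to expose the extra $\sqrt{d}$ factor. First I would reuse the momentum-based Lyapunov descent machinery developed for the CooGE version: using Assumption 1 (smoothness of $f$) along the $y$-update $y_{t+1}=z_t+\eta_t(w_t-z_t)$, I obtain
\begin{align*}
 f(y_{t+1}) \leq f(z_t) + \eta_t \langle w_t-z_t, \nabla f(z_t)\rangle + \tfrac{L\eta_t^2}{2}\|w_t-z_t\|^2,
\end{align*}
and splitting $\nabla f(z_t) = v_t + (\nabla f(z_t)-v_t)$ lets me replace $\langle w_t-z_t, v_t\rangle$ by $-\mathcal{G}_{v_t}(z_t) := -\max_{w\in\mathcal{X}} \langle w-z_t, -v_t\rangle$ via the optimality of $w_t$. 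Combined with Assumption 3 ($\|w_t-z_t\|\leq D$) and Cauchy–Schwarz, this yields the familiar per-iteration bound
\begin{align*}
 \mathbb{E}[\mathcal{G}(z_t)] \leq \mathbb{E}[\mathcal{G}_{v_t}(z_t)] + D\,\mathbb{E}\|\nabla f(z_t)-v_t\|.
\end{align*}

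Next I would combine this with the three-variable momentum Lyapunov function $\Phi_t$ (a convex combination of $f(y_t)$ and $f(x_t)$ weighted by the sequences $\alpha_t$, $\gamma_t=(1+\theta_t)\eta_t$) already used to prove Theorem \ref{th:1}. Summing the Lyapunov descent inequality from $t=0$ to $T-1$ and dividing by $T$ telescopes the $\Phi_t$ terms and produces, up to the $\theta_t$ correction,
\begin{align*}
 \tfrac{1}{T}\sum_{t=0}^{T-1} \mathbb{E}[\mathcal{G}(z_t)] \leq \tfrac{C_1}{T\eta} + C_2 L\eta + \tfrac{C_3\ln T}{T^{3/2}} + D\cdot \tfrac{1}{T}\sum_{t=0}^{T-1}\mathbb{E}\|\nabla f(z_t)-v_t\|,
\end{align*}
where the logarithmic term comes from $\sum_t \theta_t \eta_t = O(\eta \ln T)$ with the chosen $\theta_t=\frac{1}{(t+1)(t+2)}$. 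This reduces the proof to controlling the variance sum on the right.

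Third, I would invoke the UniGE variance lemma: with $\beta = d^{-1}T^{-1/2}$, $\eta = T^{-1/2}$, $b_2 = q$, $b_1 = T/d$, each of the three terms becomes
\begin{align*}
 \tfrac{\beta L d}{2} &= O(T^{-1/2}), \quad \tfrac{L(\sqrt{3}d\beta + 2\sqrt{6Dd}\eta)}{\sqrt{2b_2/q}} = O(\sqrt{d}/\sqrt{T}), \quad \tfrac{\sigma_2}{\sqrt{b_1}} = \sigma_2\sqrt{d/T},
\end{align*}
so that $\mathbb{E}\|\nabla f(z_t)-v_t\| = O(\sqrt{d}/\sqrt{T})$. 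Substituting $\eta = T^{-1/2}$ into the Lyapunov bound gives the leading terms $O(1/\sqrt{T}) + O(\sqrt{d}/\sqrt{T}) = O(\sqrt{d}/\sqrt{T})$ and a residual $O(\sqrt{d}\ln(T)/T^{3/2})$ from the $\theta_t$ bookkeeping, matching the statement.

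The main obstacle will be the accurate propagation of the $\sqrt{d}$ factor through the recursive SPIDER-style variance estimate for UniGE: unlike CooGE, the UniGE estimator is biased (its expectation is $\nabla f_\beta$, not $\nabla f$), so one must insert a triangle inequality $\|\nabla f - v_t\| \leq \|\nabla f - \nabla f_\beta\| + \|\nabla f_\beta - v_t\|$ and keep track of the $\beta L d/2$ bias alongside the recursive variance. Balancing $\beta$ against $\eta$ and $b_1$ is what forces the specific choice $\beta = d^{-1}T^{-1/2}$ and $b_1 = T/d$; every other parameter choice either blows up the bias or loses the $\sqrt{d}$ scaling. Once this balancing is done, the rest of the argument is a direct transcription of the Theorem \ref{th:1} proof.
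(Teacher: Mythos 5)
Your proposal is correct and follows essentially the same route as the paper: the paper's own proof of this theorem simply reruns the Theorem \ref{th:1} telescoping argument verbatim, substitutes the UniGE variance lemma (including the $\frac{\beta L d}{2}$ bias handled through $\|\nabla f(z_t)-\nabla f_\beta(z_t)\|+\|\nabla f_\beta(z_t)-v_t\|$, exactly as you flag), and plugs in $\eta=T^{-1/2}$, $\beta=d^{-1}T^{-1/2}$, $b_2=q$, $b_1=T/d$ to get the $O(\sqrt{d}T^{-1/2})$ leading term and the $O(\sqrt{d}\ln(T)T^{-3/2})$ remainder. The only cosmetic difference is in the momentum bookkeeping: the paper expands $f(z_{t+1})$ directly via $z_{t+1}=(1-\alpha_{t+1})y_{t+1}+\alpha_{t+1}x_{t+1}$ and recursively bounds $\langle\nabla f(z_t),x_t-z_t\rangle$, rather than using a Lyapunov combination of $f(y_t)$ and $f(x_t)$, but this does not change the resulting bound.
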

where $z_{\zeta}$ is chosen uniformly randomly from $\{z_t\}_{t=0}^{T-1}$.
\begin{remark}
 Theorem \ref{th:2} shows that the Acc-SZOFW (UniGE) algorithm has $O(\sqrt{d}T^{-\frac{1}{2}} )$ convergence rate.
 When $\mod(t,q)=0$, the Acc-SZOFW (UniGE) algorithm needs $b_1$ samples
 to estimate the zeroth-order gradient $v_t$ at each iteration and needs $T/q$ iterations, otherwise it needs $2b_2$ samples to estimate $v_t$ at each iteration and
 needs $T$ iterations. By $\sqrt{d}T^{-\frac{1}{2}} \leq \epsilon$, we choose $T=d\epsilon^{-2}$, and let $b_2=q=\epsilon^{-1}$ and $b_1=\epsilon^{-2}$,
 the Acc-SZOFW has the function query complexity of $b_1T/q+2b_2T=O(d\epsilon^{-3})$
 for finding an $\epsilon$-stationary point.
\end{remark}
\subsection{Convergence Properties of Acc-SZOFW* Algorithm}
In this subsection, we study the convergence properties of the Acc-SZOFW* Algorithm based on the CooGE and UniGE, respectively.
The detailed proofs are provided in the Appendix \ref{Appendix:A2}.

\subsubsection{ Acc-SZOFW* (CooGE) Algorithm }
\begin{lemma}
 Suppose the zeroth-order gradient $v_{t} = \hat{\nabla}_{coo} f_{\xi_t}(z_t) + (1-\rho_t)\big(v_{t-1} -\hat{\nabla}_{coo} f_{\xi_t}(z_{t-1})\big)$
 be generated from Algorithm \ref{alg:2}.
 Let $\alpha_t = \frac{1}{t+1}$, $\theta_t = \frac{1}{(t+1)(t+2)}$,
$\gamma_t = (1+\theta_t)\eta_t$, $\eta=\eta_t \leq (t+1)^{-a}$ and $\rho_t = t^{-a}$ for some $a\in (0,1]$ and the smoothing parameter $\mu = \mu_t \leq d^{-\frac{1}{2}}(t+1)^{-a}$,
 then we have
 \begin{align}
  \mathbb{E} \|v_t - \nabla f(z_t)\| \leq L\sqrt{d}\mu + \sqrt{C}(t+1)^{-\frac{a}{2}},
 \end{align}
 where $C=\frac{2(12L^2 D^2 + 12L^2 + 3\sigma^2_1)}{2-2^{-a}-a}$ for some $a\in (0,1]$.
\end{lemma}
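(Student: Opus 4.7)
The plan is to isolate the deterministic bias introduced by the coordinate-wise finite difference from the stochastic fluctuation produced by the STORM recursion, then prove the two pieces separately and recombine via the triangle inequality.

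First I would split the error as
\begin{align*}
v_t - \nabla f(z_t) = \underbrace{\bigl(v_t - \hat{\nabla}_{coo} f(z_t)\bigr)}_{\tilde\epsilon_t} + \underbrace{\bigl(\hat{\nabla}_{coo} f(z_t) - \nabla f(z_t)\bigr)}_{\text{bias}},
\end{align*}
and handle the bias immediately: coordinate-wise Taylor expansion under $L$-smoothness (Assumption 1) gives $\|\hat{\nabla}_{coo} f(z_t) - \nabla f(z_t)\| \le L\sqrt{d}\,\mu$ deterministically, which contributes the first term of the advertised bound. Using $\mathbb{E}\|\tilde\epsilon_t\| \le (\mathbb{E}\|\tilde\epsilon_t\|^2)^{1/2}$, it then suffices to prove $\mathbb{E}\|\tilde\epsilon_t\|^2 \le C(t+1)^{-a}$.

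Next, since $\mathbb{E}_{\xi_t}[\hat{\nabla}_{coo} f_{\xi_t}(\cdot)] = \hat{\nabla}_{coo} f(\cdot)$, the STORM update can be rewritten as
\begin{align*}
\tilde\epsilon_t = (1-\rho_t)\tilde\epsilon_{t-1} + \rho_t U_t + (1-\rho_t) V_t,
\end{align*}
where $U_t = \hat{\nabla}_{coo} f_{\xi_t}(z_t) - \hat{\nabla}_{coo} f(z_t)$ and $V_t = \bigl(\hat{\nabla}_{coo} f_{\xi_t}(z_t) - \hat{\nabla}_{coo} f_{\xi_t}(z_{t-1})\bigr) - \bigl(\hat{\nabla}_{coo} f(z_t) - \hat{\nabla}_{coo} f(z_{t-1})\bigr)$. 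Both $U_t$ and $V_t$ have zero conditional mean given the history, so expanding $\|\tilde\epsilon_t\|^2$ and taking expectations eliminates the cross terms with $\tilde\epsilon_{t-1}$, yielding
\begin{align*}
\mathbb{E}\|\tilde\epsilon_t\|^2 \le (1-\rho_t)^2 \mathbb{E}\|\tilde\epsilon_{t-1}\|^2 + 2\rho_t^2 \mathbb{E}\|U_t\|^2 + 2(1-\rho_t)^2 \mathbb{E}\|V_t\|^2.
\end{align*}
To bound $\mathbb{E}\|U_t\|^2$, I would insert $\pm\nabla f_{\xi_t}(z_t) \mp \nabla f(z_t)$ and apply $(a{+}b{+}c)^2 \le 3(a^2{+}b^2{+}c^2)$, controlling the two CooGE biases by $L^2 d\mu^2$ each (Assumption 1) and the middle sampling term by $\sigma_1^2$ (Assumption 2) to obtain $\mathbb{E}\|U_t\|^2 \le 6L^2 d\mu^2 + 3\sigma_1^2$. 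For $\mathbb{E}\|V_t\|^2$, since $V_t$ is the centered version of a single random increment, I would use $\mathbb{E}\|V_t\|^2 \le \mathbb{E}\|\hat{\nabla}_{coo} f_{\xi_t}(z_t) - \hat{\nabla}_{coo} f_{\xi_t}(z_{t-1})\|^2$ and then route through the true gradient $\nabla f_{\xi_t}$ (two CooGE biases plus one $L$-Lipschitz step) to produce $\mathbb{E}\|V_t\|^2 \le 6L^2 d\mu^2 + 3L^2 \mathbb{E}\|z_t - z_{t-1}\|^2$. Finally, a direct expansion of $z_t = (1-\alpha_t)y_t + \alpha_t x_t$ using the updates of Algorithm \ref{alg:2} together with the diameter bound (Assumption 3) gives $\|z_t - z_{t-1}\| \le 2\eta_{t-1} D$ up to constants, so that $\mathbb{E}\|V_t\|^2 \le 6L^2 d\mu^2 + 12 L^2 D^2 \eta_{t-1}^2$.

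The final step is to feed these bounds back into the recursion. With the choices $\rho_t = t^{-a}$, $\eta_t \le (t+1)^{-a}$, and $\mu \le d^{-1/2}(t+1)^{-a}$, all non-recursive terms collapse into a multiple of $(t+1)^{-2a}$; collecting constants I obtain $\mathbb{E}\|\tilde\epsilon_t\|^2 \le (1-\rho_t)^2 \mathbb{E}\|\tilde\epsilon_{t-1}\|^2 + 2(12L^2D^2 + 12L^2 + 3\sigma_1^2)(t+1)^{-2a}$, where the numerator of the claimed constant $C$ is already visible. The closing argument is an induction on $t$: assuming $\mathbb{E}\|\tilde\epsilon_{t-1}\|^2 \le C t^{-a}$ I must verify $(1-t^{-a})^2 C t^{-a} + 2(12L^2D^2 + 12L^2 + 3\sigma_1^2)(t+1)^{-2a} \le C(t+1)^{-a}$. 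The hard part will be exactly this calibration of $C$: one must check the base case $t=1$ (which is where the $2^{-a}$ contribution to the denominator originates) and use the elementary inequalities $(1-t^{-a})^2 \le 1 - t^{-a}$ together with $(t+1)^{-a} \ge t^{-a}(1 - a/(t+1))$ to reduce the induction step to the requirement $C\bigl(2 - 2^{-a} - a\bigr) \ge 2(12L^2D^2 + 12L^2 + 3\sigma_1^2)$, which is precisely the stated value of $C$. Taking square roots then delivers the advertised bound.
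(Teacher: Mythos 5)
Your decomposition, recursion, and constants coincide with the paper's proof: the same split into the CooGE bias $L\sqrt{d}\mu$ and the centered error $\tilde\epsilon_t$, the same STORM recursion with zero-mean $U_t,V_t$ yielding $\mathbb{E}\|\tilde\epsilon_t\|^2\le(1-\rho_t)^2\mathbb{E}\|\tilde\epsilon_{t-1}\|^2+2\rho_t^2\mathbb{E}\|U_t\|^2+2(1-\rho_t)^2\mathbb{E}\|V_t\|^2$, the same bounds $\mathbb{E}\|U_t\|^2\le 6L^2d\mu^2+3\sigma_1^2$ and $\mathbb{E}\|V_t\|^2\le 6L^2d\mu^2+12L^2D^2\eta^2$ via $\|z_t-z_{t-1}\|\le 2\eta D$, and the same induction establishing $A_t\le C(t+1)^{-a}$. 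The only caveat is in your closing calibration: the shortcut $(1-t^{-a})^2\le 1-t^{-a}$ is too lossy and would yield a denominator $1-a$ (degenerating at $a=1$) rather than $2-2^{-a}-a$; you must keep the full square $1-2(t+1)^{-a}+(t+1)^{-2a}$ and bound the cubic term by $2^{-a}(t+1)^{-2a}$ for $t\ge 1$, as the paper does, to reach the stated constant $C$.
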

\begin{theorem} \label{th:3}
Suppose $\{x_t,y_t,z_t\}^{T-1}_{t=0}$ be generated from Algorithm \ref{alg:2} by using the \textbf{CooGE} zeroth-order gradient estimator.
Let $\alpha_t = \frac{1}{t+1}$, $\theta_t = \frac{1}{(t+1)(t+2)}$,
$\eta=\eta_t = T^{-\frac{2}{3}}$, $\gamma_t = (1+\theta_t)\eta_t$, $\rho_t = t^{-\frac{2}{3}}$ for $t\geq 1$
and $\mu=d^{-\frac{1}{2}}T^{-\frac{2}{3}}$, then we have
\begin{align}
\mathbb{E} [\mathcal{G} (z_{\zeta})] = \frac{1}{T}\sum_{t=1}^{T-1}\mathbb{E} [\mathcal{G}(z_{t})] \leq O(\frac{1}{T^{\frac{1}{3}}}) + O(\frac{\ln(T)}{T^{\frac{4}{3}}}),  \nonumber
\end{align}
where $z_{\zeta}$ is chosen uniformly randomly from $\{z_t\}_{t=0}^{T-1}$.
\end{theorem}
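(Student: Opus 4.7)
The plan is to combine three ingredients: an $L$-smoothness descent inequality along the auxiliary update to $y_{t+1}$; the Frank--Wolfe duality bound that controls $\mathcal{G}(z_t)$ through $-\langle w_t-z_t,v_t\rangle$ plus a gradient-error remainder; and the preceding lemma bounding $\mathbb{E}\|v_t-\nabla f(z_t)\|$ by $L\sqrt{d}\mu+\sqrt{C}(t+1)^{-a/2}$ with $a=2/3$. The accelerated three-sequence template on $\{x_t,y_t,z_t\}$ is handled analogously to the proof of Theorem~\ref{th:1}, since the STORM-based construction of $v_t$ only changes the quality of the gradient estimate, not the outer Frank--Wolfe dynamics; what is new is that the stochastic error is now per-iteration and decays in $t$, rather than being forced uniformly small through large batches.

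First I would derive a per-iteration descent inequality of the form
\[
\mathbb{E} f(y_{t+1}) \le \mathbb{E} f(z_t) - \eta_t\,\mathbb{E}\mathcal{G}(z_t) + c_1 D \eta_t\,\mathbb{E}\|v_t-\nabla f(z_t)\| + \tfrac{1}{2} L D^2 \eta_t^2,
\]
obtained by applying $L$-smoothness to $y_{t+1}=z_t+\eta_t(w_t-z_t)$ together with the inequality $\langle v_t,w_t-z_t\rangle \le -\mathcal{G}(z_t)+D\,\|v_t-\nabla f(z_t)\|$, which follows from $w_t=\arg\min_{w\in\mathcal{X}}\langle w,v_t\rangle$, Cauchy--Schwarz, and Assumption~3. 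Converting this into a bound on $f(z_{t+1})$ requires absorbing the momentum step $z_{t+1}=(1-\alpha_{t+1})y_{t+1}+\alpha_{t+1}x_{t+1}$; the specific choices $\alpha_t=1/(t+1)$, $\gamma_t=(1+\theta_t)\eta_t$, $\theta_t=1/[(t+1)(t+2)]$ are engineered so that the residual $\alpha_{t+1}(x_{t+1}-y_{t+1})$ telescopes, contributing only a $\ln T$ factor rather than scaling linearly in $T$. Summing from $t=0$ to $T-1$, using Assumption~4 ($f(z_0)-\inf f\le \triangle$) and Assumption~3 on the diameter, and dividing by $T$ yields
\[
\frac{1}{T}\sum_{t=0}^{T-1}\mathbb{E}\mathcal{G}(z_t) \le \frac{\triangle}{\eta T} + \frac{c_1 D}{T}\sum_{t=0}^{T-1}\mathbb{E}\|v_t-\nabla f(z_t)\| + c_2 LD^2\eta + \frac{c_3\ln T}{\eta T^2}.
\]

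Next I would substitute the gradient-error lemma. With $\mu=d^{-1/2}T^{-2/3}$ the bias contribution $L\sqrt{d}\mu$ is $LT^{-2/3}$, and with $a=2/3$ the variance contribution is $\sqrt{C}(t+1)^{-1/3}$; since $\tfrac{1}{T}\sum_{t=0}^{T-1}(t+1)^{-1/3}=O(T^{-1/3})$ and the choice $\eta=T^{-2/3}$ makes $\triangle/(\eta T)$ and $LD^2\eta$ both $O(T^{-2/3})$, the dominant rate is $O(T^{-1/3})$, while the telescoped residual becomes $O(\ln T/T^{4/3})$, matching the stated bound. The main obstacle will be the interaction between the accelerated three-sequence momentum bookkeeping and a gradient error that is no longer uniformly small: I need to verify that the telescoping used for Theorem~\ref{th:1} still closes when $\mathbb{E}\|v_t-\nabla f(z_t)\|$ is a decaying sequence rather than a constant, and that the STORM recursion underlying the lemma (which requires $a\in(0,1]$ with $2-2^{-a}-a>0$) is consistent with the schedule $\rho_t=t^{-2/3}$, $\eta_t=T^{-2/3}$, $\mu_t=d^{-1/2}T^{-2/3}$. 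Checking that $z_{t+1}\in\mathcal{X}$ for every $t$, which follows by convexity of $\mathcal{X}$ from $\gamma_t,\eta_t\in(0,1)$ and $\alpha_{t+1}\in[0,1]$, is a final routine verification.
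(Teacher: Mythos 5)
Your proposal follows essentially the same route as the paper's proof: an $L$-smoothness descent step combined with the Frank--Wolfe optimality of $w_t$ to get $-\mathcal{G}(z_t)+D\|v_t-\nabla f(z_t)\|$, a recursion controlling the momentum term $\langle\nabla f(z_t),x_t-z_t\rangle$ whose interchange of sums yields the $\ln T$ factor, and substitution of the STORM error lemma with $a=2/3$ together with $\frac{1}{T}\sum_{t}(t+1)^{-1/3}=O(T^{-1/3})$ and the balancing choice $\eta=T^{-2/3}$, $\mu=d^{-1/2}T^{-2/3}$. The only cosmetic difference is that you apply smoothness along $y_{t+1}$ and then absorb the momentum correction, whereas the paper expands $z_{t+1}-z_t$ directly; the bookkeeping is the same and closes as you anticipate.
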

\begin{remark}
 Theorem \ref{th:3} shows that the Acc-SZOFW*(CooGE) algorithm has $O(T^{-\frac{1}{3}} )$ convergence rate.
 It needs $2d$ samples to estimate the zeroth-order gradient $v_t$ at each iteration,
 and needs $T$ iterations. For finding an $\epsilon$-stationary point, \emph{i.e.,} ensuring
 $\mathbb{E}[\mathcal{G}(z_\zeta)] \leq \epsilon$, by $T^{-\frac{1}{3}} \leq \epsilon$,
 we choose $T=\epsilon^{-3}$. Thus the Acc-SZOFW* has the function query complexity of $2dT = O(d\epsilon^{-3})$.
 \textbf{Note that} the Acc-SZOFW* algorithm only requires a small mini-batch size such as $2$
 and reaches the same function query complexity as the Acc-SZOFW algorithm that requires large batch sizes $b_2 = \epsilon^{-1}$ and $b_1 = \epsilon^{-2}$.
 For clarity, we need to emphasize that the \textbf{mini-batch size} denotes the sample size required at each iteration, while the \textbf{query-size} (in Table \ref{tab:1})
 denotes the function query size required in estimating one zeroth-order gradient in these algorithms. In fact, there exists a positive correlation between them.
 For example, in the Acc-SZOFW* algorithm, the mini-batch size is 2, and the corresponding query-size is $2d$.
\end{remark}
\subsubsection{ Acc-SZOFW* (UniGE) Algorithm  }
\begin{lemma}
 Suppose the zeroth-order gradient $v_{t} = \hat{\nabla}_{uni} f_{\xi_t}(z_t) + (1-\rho_t)\big(v_{t-1} -\hat{\nabla}_{uni} f_{\xi_t}(z_{t-1})\big)$ be generated from Algorithm \ref{alg:2}.
 Let $\alpha_t = \frac{1}{t+1}$, $\theta_t = \frac{1}{(t+1)(t+2)}$,
 $\gamma_t = (1+\theta_t)\eta_t$, $\eta=\eta_t \leq (t+1)^{-a}$ and $\rho_t = t^{-a}$ for some $a\in (0,1]$ and the smoothing parameter $\beta=\beta_t \leq d^{-1}(t+1)^{-a}$, then
 we have
 \begin{align}
  \mathbb{E} \|v_t - \nabla f(z_t)\| \leq \frac{\beta L d}{2}+ \sqrt{C}(t+1)^{-\frac{a}{2}},
 \end{align}
 where $C=\frac{24dL^2D^2 + 3L^2+ 2\sigma^2_2}{2-2^{-a}-a}$ for some $a\in (0,1]$.
\end{lemma}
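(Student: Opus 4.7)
The plan is to split the error $v_t - \nabla f(z_t)$ into a deterministic smoothing bias and a stochastic part, and then drive a STORM-style variance recursion on the stochastic part. For the bias, I would invoke the standard UniGE identity $\mathbb{E}_u[\hat{\nabla}_{uni} f_\xi(x)] = \nabla f_\beta(x)$ combined with Nesterov's inequality $\|\nabla f_\beta(x) - \nabla f(x)\| \le \tfrac{\beta L d}{2}$ (which is a routine consequence of $L$-smoothness and Jensen). That immediately produces the first summand in the claim, so all substantive work is on $\tilde{e}_t := v_t - \nabla f_\beta(z_t)$.

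For $\tilde{e}_t$, I would rewrite the update as $\tilde{e}_t = (1-\rho_t)\tilde{e}_{t-1} + M_t + N_t$, with $M_t := \rho_t\bigl(\hat{\nabla}_{uni} f_{\xi_t}(z_t) - \nabla f_\beta(z_t)\bigr)$ and $N_t := (1-\rho_t)\bigl[(\hat{\nabla}_{uni} f_{\xi_t}(z_t) - \hat{\nabla}_{uni} f_{\xi_t}(z_{t-1})) - (\nabla f_\beta(z_t) - \nabla f_\beta(z_{t-1}))\bigr]$. Since $\mathbb{E}_{\xi_t,u_t}[\hat{\nabla}_{uni} f_{\xi_t}(\cdot)] = \nabla f_\beta(\cdot)$, both $M_t$ and $N_t$ have zero conditional mean given the history $\mathcal{F}_{t-1}$ enlarged by $z_t$, so after taking expectations the cross term with the measurable piece $(1-\rho_t)\tilde{e}_{t-1}$ vanishes. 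Assumption~2 gives $\mathbb{E}\|M_t\|^2 \le \rho_t^2\sigma_2^2$, while a UniGE second-moment transport inequality of the form $\mathbb{E}\|\hat{\nabla}_{uni} f_\xi(x) - \hat{\nabla}_{uni} f_\xi(y)\|^2 \le c\,dL^2\|x-y\|^2$ handles $\mathbb{E}\|N_t\|^2$; this transport inequality follows from writing $f_\xi(x+\beta u)-f_\xi(x) = \int_0^\beta \nabla f_\xi(x+su)^\top u\,ds$, subtracting the analogue at $y$, and using the unit-sphere identity $\mathbb{E}[uu^\top] = I/d$ to save one factor of $d$ against the naive pointwise Lipschitz bound. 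Combined with the algorithmic estimate $\|z_t - z_{t-1}\| \le c'\eta_{t-1}D$ (read off the Algorithm~2 updates together with Assumption~3, since all of $w,x,y,z$ stay in $\mathcal{X}$), this delivers a recursion
\begin{equation*}
\mathbb{E}\|\tilde{e}_t\|^2 \le (1-\rho_t)^2\mathbb{E}\|\tilde{e}_{t-1}\|^2 + 2\rho_t^2\sigma_2^2 + c_0\,dL^2D^2\eta_t^2 + c_1 L^2\eta_t^2,
\end{equation*}
whose explicit constants $c_0,c_1$ will produce the $24\,dL^2D^2$ and $3L^2$ coefficients after the final bookkeeping.

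The last step is to close the recursion by induction with the ansatz $\mathbb{E}\|\tilde{e}_t\|^2 \le C(t+1)^{-a}$. Plugging in $\rho_t = t^{-a}$, $\eta_t \le (t+1)^{-a}$, multiplying both sides by $(t+1)^a$, and using the elementary estimates $(1-t^{-a})^2 \le 1-2t^{-a}+t^{-2a}$ together with $(1+1/t)^{-a} \le 1 - a/(t+1) + O(1/t^2)$, the inductive step reduces to the scalar requirement $(2 - 2^{-a} - a)\,C \ge 24\,dL^2D^2 + 3L^2 + 2\sigma_2^2$, which is exactly the stated value of $C$. Taking square roots and adding back the smoothing bias $\beta Ld/2$ gives the claim. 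The main obstacle is precisely this last calibration: the denominator $2 - 2^{-a} - a$ has to be produced sharply and uniformly in $a \in (0,1]$, which means the $t=1$ base case (responsible for the $-2^{-a}$) and the asymptotic $(1+1/t)^{-a}$ expansion (responsible for the $-a$) must be handled in lockstep so that the remaining slack is strictly positive. Everything else—especially the UniGE-specific saving of one factor of $d$—is the only analytical point that differs from the CooGE companion lemma; the rest is the same recursion-solving exercise.
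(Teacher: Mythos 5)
Your proposal follows essentially the same route as the paper's proof: the same split into smoothing bias $\|\nabla f_\beta(z_t)-\nabla f(z_t)\|\le \beta Ld/2$ plus a STORM recursion on $v_t-\nabla f_\beta(z_t)$, the same decomposition into the $(1-\rho_t)$-contracted previous error and two conditionally mean-zero terms, the same ingredients (Assumption 2, the UniGE transport bound of Lemma \ref{lem:A4}, and $\|z_t-z_{t-1}\|\le 2\eta D$ from Lemma \ref{lem:A1}), and the same induction yielding $C=(24dL^2D^2+3L^2+2\sigma_2^2)/(2-2^{-a}-a)$. The one imprecision is that the transport inequality you state omits the additive $\tfrac{3}{2}L^2d^2\beta^2$ term that Lemma \ref{lem:A4} carries --- it is this term, with $\beta_t\le d^{-1}(t+1)^{-a}$, that produces the $3L^2$ in $C$ rather than an $\eta_t^2$ contribution --- but since your recursion already budgets a term of exactly that order, the argument closes as in the paper.
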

\begin{theorem} \label{th:4}
Suppose $\{x_t,y_t,z_t\}^{T-1}_{t=0}$ be generated from Algorithm \ref{alg:2} by using the \textbf{UniGE} zeroth-order gradient estimator.
Let $\alpha_t = \frac{1}{t+1}$, $\theta_t = \frac{1}{(t+1)(t+2)}$,
$\eta=\eta_t = T^{-\frac{2}{3}}$, $\gamma_t = (1+\theta_t)\eta_t$, $\rho_t = t^{-\frac{2}{3}}$ for $t\geq 1$ and $\beta=d^{-1}T^{-\frac{2}{3}}$,
then we have
 \begin{align}
 \mathbb{E} [\mathcal{G} (z_{\zeta})] = \frac{1}{T}\sum_{t=1}^{T-1}\mathbb{E} [\mathcal{G}(z_{t})] \leq O(\frac{\sqrt{d}}{T^{\frac{1}{3}}})+ O(\frac{\sqrt{d}\ln(T)}{T^{\frac{4}{3}}}), \nonumber
\end{align}
where $z_{\zeta}$ is chosen uniformly randomly from $\{z_t\}_{t=0}^{T-1}$.
\end{theorem}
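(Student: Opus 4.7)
The plan is to closely parallel the analysis of Theorem~\ref{th:3} (the CooGE version of Acc-SZOFW*), with the CooGE error bound from the preceding lemma replaced by the UniGE bound $\mathbb{E}\|v_t - \nabla f(z_t)\| \leq \tfrac{\beta L d}{2} + \sqrt{C}(t+1)^{-a/2}$ with $a = 2/3$ and $C = (24dL^2 D^2 + 3L^2 + 2\sigma_2^2)/(2 - 2^{-a} - a)$. The overall strategy is a Lyapunov-style one-step descent for $f$ along the sequence $z_t$, turned into a telescoping bound on the Frank-Wolfe gap $\mathcal{G}(z_t)$.

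First I would apply $L$-smoothness to the update $y_{t+1} = z_t + \eta_t(w_t - z_t)$ to obtain $f(y_{t+1}) \leq f(z_t) + \eta_t \langle \nabla f(z_t), w_t - z_t\rangle + \tfrac{L\eta_t^2 D^2}{2}$, and then use the linear optimality of $w_t = \arg\max_{w\in\mathcal{X}} \langle w, -v_t\rangle$ together with $\mathcal{G}(z_t) = \max_{w\in\mathcal{X}}\langle w - z_t, -\nabla f(z_t)\rangle$ to derive $\langle \nabla f(z_t), w_t - z_t\rangle \leq -\mathcal{G}(z_t) + 2D\|v_t - \nabla f(z_t)\|$, where the diameter assumption is crucial. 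Combining these two inequalities yields a one-step inequality of the form $\eta_t \mathcal{G}(z_t) \leq f(z_t) - f(y_{t+1}) + 2D\eta_t\|v_t - \nabla f(z_t)\| + \tfrac{L\eta_t^2 D^2}{2}$. The subtlety is that the output is $z_{t+1}$, not $y_{t+1}$, so a second application of smoothness to $z_{t+1} = (1-\alpha_{t+1})y_{t+1} + \alpha_{t+1}x_{t+1}$, together with the specific choices $\alpha_t = 1/(t+1)$ and $\gamma_t = (1+\theta_t)\eta_t$ with $\theta_t = 1/((t+1)(t+2))$, should make the cross-term involving $x_{t+1} - y_{t+1}$ telescope when summed, leaving only lower-order residuals of size $O(\ln(T)/T^{4/3})$.

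Next I would take expectation, sum from $t=1$ to $T-1$, and divide by $T\eta$. The gradient-error term contributes $\frac{2D}{T}\sum_t \mathbb{E}\|v_t - \nabla f(z_t)\| \leq 2D\bigl(\tfrac{\beta Ld}{2} + \sqrt{C}\cdot\tfrac{1}{T}\sum_{t=1}^{T-1}(t+1)^{-1/3}\bigr)$, and the standard integral estimate $\tfrac{1}{T}\sum_{t=1}^{T-1}(t+1)^{-1/3} \leq \tfrac{3}{2}T^{-1/3}$ yields a dominant term of order $\sqrt{C}\,T^{-1/3} = O(\sqrt{d}\,T^{-1/3})$, since $C$ is linear in $d$. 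The bias term $\tfrac{\beta L d}{2} = \tfrac{L}{2}T^{-2/3}$ and the step-size term $\tfrac{L\eta D^2}{2} = O(T^{-2/3})$ are both absorbed into the lower-order $O(\sqrt{d}\ln(T)/T^{4/3})$ bucket once combined with the telescoping residuals. Summing everything and dividing by $T$ gives the claimed rate $\mathbb{E}[\mathcal{G}(z_\zeta)] \leq O(\sqrt{d}/T^{1/3}) + O(\sqrt{d}\ln(T)/T^{4/3})$.

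The main obstacle will be step three: establishing the descent inequality for $f(z_{t+1}) - f(z_t)$ rather than $f(y_{t+1}) - f(z_t)$. The carefully tuned relation $\gamma_t - \eta_t = \theta_t \eta_t$ with $\theta_t = 1/((t+1)(t+2))$ is designed so that the residual from writing $z_{t+1}$ in terms of both $y_{t+1}$ and $x_{t+1}$ produces a telescoping contribution of the form $\sum_t \tfrac{1}{(t+1)(t+2)}$, which contributes only a logarithmic factor on a $1/T^{4/3}$ scale. Verifying this telescoping, together with cleanly separating the bias (from $\nabla f_\beta$ versus $\nabla f$, captured by $\beta L d/2$ in Lemma~6) from the stochastic error, is the only place where the UniGE analysis genuinely differs from the CooGE proof of Theorem~\ref{th:3}; everything else is a mechanical substitution of constants.
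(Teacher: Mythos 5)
Your proposal is correct and follows essentially the same route as the paper: the paper likewise reduces Theorem~\ref{th:4} to the argument of Theorem~\ref{th:3}, using the one-step smoothness expansion along $z_{t+1}-z_t$, the bound $\langle \nabla f(z_t), w_t-z_t\rangle \leq D\|\nabla f(z_t)-v_t\|-\mathcal{G}(z_t)$ from the linear-optimality of $w_t$, a recursion on $\langle \nabla f(z_t), x_t-z_t\rangle$ driven by $\gamma_t-\eta_t=\theta_t\eta_t$ with summable $\theta_t$, the UniGE error lemma with $a=2/3$ and $C=O(d)$, and the integral estimates $\sum_t (t+1)^{-1/3}\leq \frac{3}{2}T^{2/3}$ and $\sum_t \theta_t\ln(T/(t+1))\leq \ln(T)$. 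The only slips are cosmetic: the bias term $\beta Ld/2=O(T^{-2/3})$ and the step-size term are absorbed by the leading $O(\sqrt{d}\,T^{-1/3})$ term rather than the $O(\sqrt{d}\ln(T)T^{-4/3})$ term, and your constant $2D$ in the gap inequality is a harmless overestimate of the paper's $D$.
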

\begin{remark}
 Theorem \ref{th:4} states that the Acc-SZOFW*(UniGE) algorithm has $O(\sqrt{d} T^{-\frac{1}{3}} )$ convergence rate.
 It needs $2$ samples to estimate the zeroth-order gradient $v_t$ at each iteration,
 and needs $T$ iterations. By $\sqrt{d}T^{-\frac{1}{3}} \leq \epsilon$,
 we choose $T=d^{\frac{3}{2}}\epsilon^{-3}$. Thus, the Acc-SZOFW* has the function query complexity of $2T = O(d^{\frac{3}{2}}\epsilon^{-3})$
 for finding an $\epsilon$-stationary point.
\end{remark}
\begin{figure}[htbp]
    \vskip 0.2in
    \begin{center}
	\subfigure[MNIST]{
		\begin{minipage}[b]{0.225\textwidth}
			\includegraphics[width=1\textwidth]{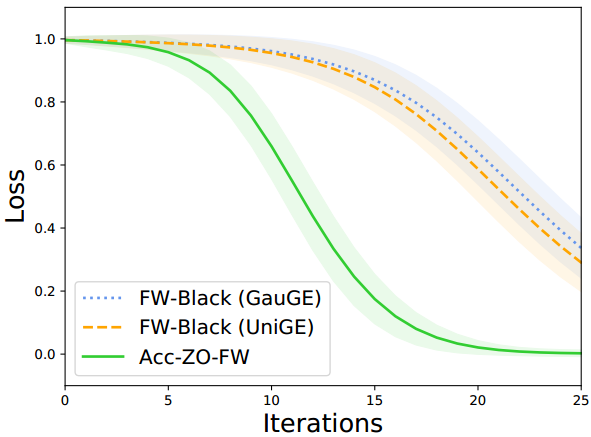}
		\end{minipage}
		\label{fig:sap_mnist}
	}
    \subfigure[CIFAR10]{
        \begin{minipage}[b]{0.225\textwidth}
        \includegraphics[width=1\textwidth]{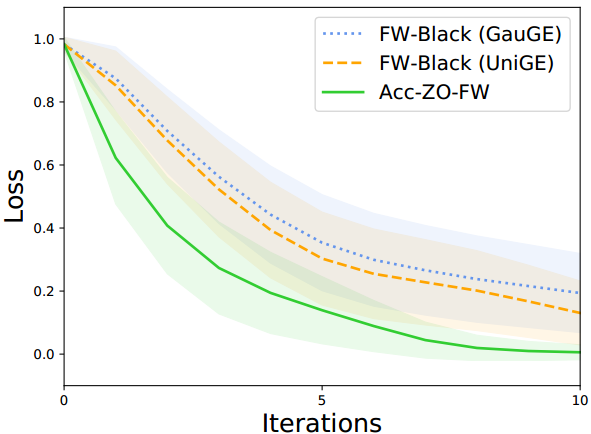}
        \end{minipage}
        \label{fig:sap_cifar10}
    }
    \vspace*{-4pt}
	\caption{The convergence of attack loss against iterations of three algorithms on the SAP problem. }
    \label{fig:sap}
    \end{center}
    \vskip -0.2in
\end{figure}
\begin{figure}[tb]
    \vskip 0.2in
    \begin{center}
	\subfigure[MNIST]{
		\begin{minipage}[b]{0.225\textwidth}
			\includegraphics[width=1\textwidth]{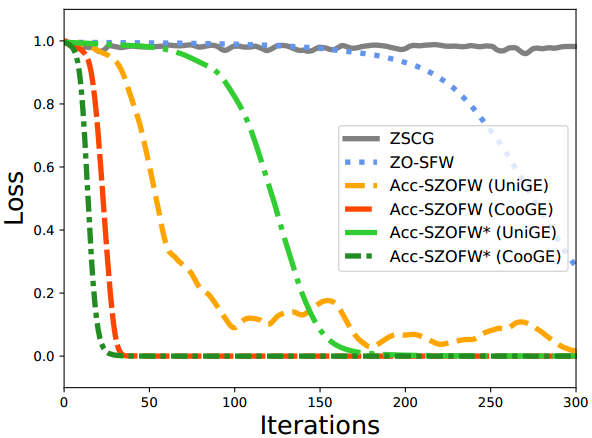}
		\end{minipage}
		\label{fig:uap_mnist_iteration}
	}
    \subfigure[CIFAR10]{
        \begin{minipage}[b]{0.225\textwidth}
            \includegraphics[width=1\textwidth]{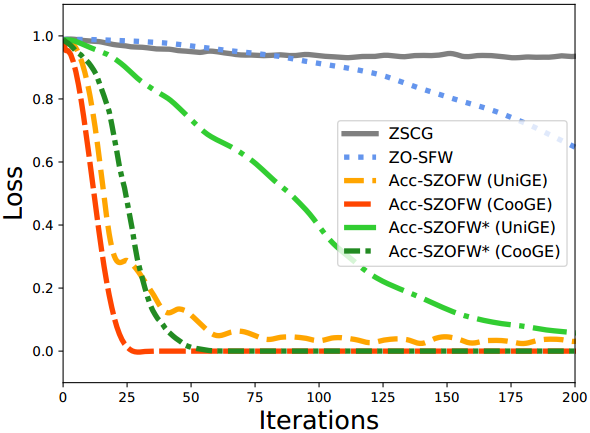}
        \end{minipage}
        \label{fig:uap_cifar10_iteration}
    }
	\subfigure[MNIST]{
		\begin{minipage}[b]{0.225\textwidth}
		    \includegraphics[width=1\textwidth]{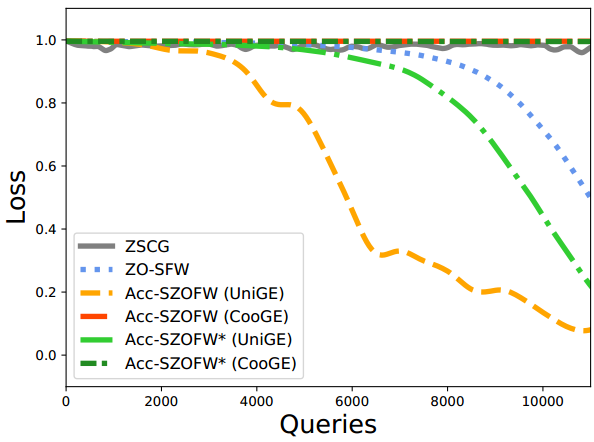}
		\end{minipage}
		\label{fig:uap_mnist_queries}
	}
    \subfigure[CIFAR10]{
        \begin{minipage}[b]{0.225\textwidth}
            \includegraphics[width=1\textwidth]{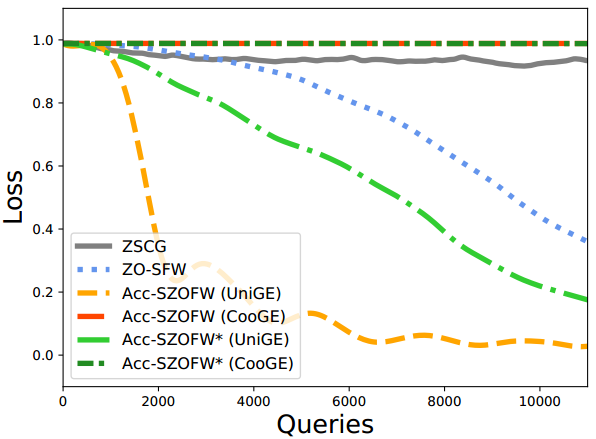}
        \end{minipage}
        \label{fig:uap_cifar10_queries}
    }
	\caption{Comparison of six algorithms for the UAP problem. Above: the convergence of attack loss against iterations. Below: the convergence of attack loss against queries.}
    \label{fig:uap}
    \end{center}
    \vskip -0.2in
\end{figure}
\section{Experiments}
In this section, we evaluate the performance of our proposed algorithms on two applications:
1) generating adversarial examples from black-box deep neural networks (DNNs)
and 2) robust black-box binary classification with $\ell_1$ norm bound constraint.
In the first application, we focus on two types of black-box adversarial attacks: \textit{single adversarial perturbation} (SAP) against an image
and \textit{universal adversarial perturbation} (UAP) against multiple images.
Specifically, we apply the SAP to demonstrate the efficiency of our deterministic Acc-ZO-FW algorithm
and compare with the FW-Black \cite{chen2018frank} algorithm.
While we apply the UAP and robust black-box binary classification to verify the efficiency of
our stochastic algorithms (i.e., Acc-SZOFW and Acc-SZOFW*)
and compare with the ZO-SFW \cite{sahu2019towards} algorithm and the ZSCG \cite{balasubramanian2018zeroth} algorithm.
All of our experiments are conducted on a server with an Intel Xeon 2.60GHz CPU
and an NVIDIA Titan Xp GPU.
Our implementation is based on PyTorch and the code to reproduce our results is publicly available at https://github.com/TLMichael/Acc-SZOFW.
\subsection{Black-box Adversarial Attack}
In this subsection, we apply the zeroth-order algorithms to generate adversarial perturbations to attack the pre-trained black-box DNNs,
whose parameters are hidden and only its outputs are accessible.
Let $(a, b)$ denote an image $a$ with its true label $b \in\{1,2, \cdots, K\}$,
where $K$ is the total number of image classes. For the SAP, we will design a perturbation $x$ for a single image $(a, b)$;
For the UAP, we will design a universal perturbation $x$ for multiple images $\{a_{i}, b_{i}\}_{i=1}^{n}$.
Following \cite{guo2019simple}, we solve the untargeted attack problem as follows:
\begin{equation}
    \min_{x \in \mathbb{R}^d} \frac{1}{n} \sum_{i=1}^{n} \ p (b_i \ |\  a_i+x), \quad \textrm{s.t.} \  \|x\|_\infty \leq \varepsilon \label{eq:ap}
\end{equation}
where $p(\cdot\ |\ a)$ represents probability associated with each class, that is, the final output after softmax of
neural network. In the problem \eqref{eq:ap}, we normalize the pixel values to $[0, 1]^d$.

In the experiment, we use the pre-trained DNN models on MNIST \cite{lecun2010mnist} and CIFAR10 \cite{krizhevsky2009learning} datasets
as the target black-box models,
which can attain 99.16\% and 93.07\% test accuracy, respectively.
In the SAP experiment, we choose $\varepsilon=0.3$ for MNIST and $\varepsilon=0.1$ for CIFAR10.
In the UAP experiment, we choose $\varepsilon=0.3$ for both MNIST dataset and CIFAR10 dataset.
For fair comparison, we choose the mini-batch size $b=20$ for all stochastic zeroth-order methods.
We refer readers to Appendix \ref{Appendix:A3}
for more details of the experimental setups
and the generated adversarial examples by our proposed algorithms.

Figure~\ref{fig:sap} shows that the convergence behaviors of three algorithms on SAP problem, where for each curve,
we generate 1000 adversarial perturbations on MNIST and 100 adversarial perturbations on CIFAR10,
the mean value of loss are plotted and the range of standard deviation is shown as a shadow overlay.
For both datasets, the results show that the attack loss values of our Acc-ZO-FW algorithm faster decrease than those of the FW-Black algorithms,
as the iteration increases, which demonstrates the superiority of our novel momentum technique and CooGE used in the Acc-ZO-FW algorithm.

Figure~\ref{fig:uap} shows that the convergence of six algorithms on UAP problem. For both datesets, the results show that all of our accelerated zeroth-order algorithms have faster convergence speeds (i.e. less iteration complexity) than the existing algorithms, while the Acc-SZOFW (UniGE) algorithm and the Acc-SZOFW* (UniGE) have faster convergence speeds (i.e. less function query complexity) than other algorithms (especially ZSCG and ZO-SFW), which verifies that the effectiveness of the variance reduced technique and the novel momentum technique in our accelerated algorithms. We notice that the periodic jitter of the curve of Acc-SZOFW (UniGE), which is due to the gradient variance reduction period of the variance reduced technique and the imprecise estimation of the uniform smoothing gradient estimator makes the jitter more significant. The jitter is less obvious in Acc-SZOFW (CooGE). Figure~\ref{fig:uap_mnist_queries} and Figure~\ref{fig:uap_cifar10_queries} represent the attack loss against the number of function queries. We observe that the performance of our CooGE-based algorithms degrade since the need of large number of queries to construct coordinate-wise gradient estimates. From these results, we also find that the CooGE-based methods can not be competent to high-dimensional datasets due to estimating each coordinate-wise gradient required at least $d$ queries. In addition, the performance of the Acc-SZOFW algorithms is better than the Acc-SZOFW* algorithms in most cases, which is due to the considerable mini-batch size used in the Acc-SZOFW algorithms.
\begin{figure*}[t]
    \vskip 0.2in
    \begin{center}
        \subfigure[phishing]{
            \begin{minipage}[b]{0.46\columnwidth}
                \includegraphics[width=1\textwidth]{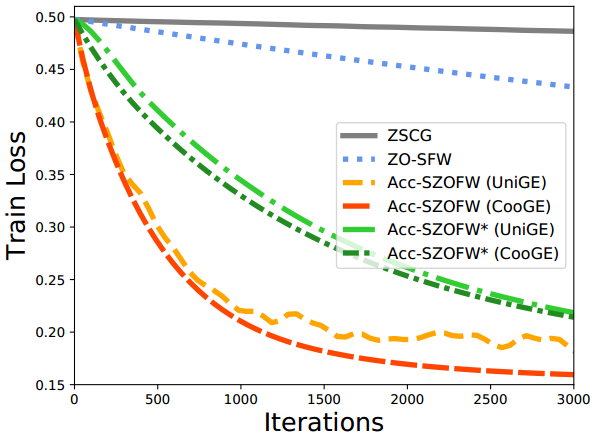}
            \end{minipage}
            \label{fig:rlc_phishing_iteration}
        }
        \subfigure[a9a]{
            \begin{minipage}[b]{0.46\columnwidth}
                \includegraphics[width=1\textwidth]{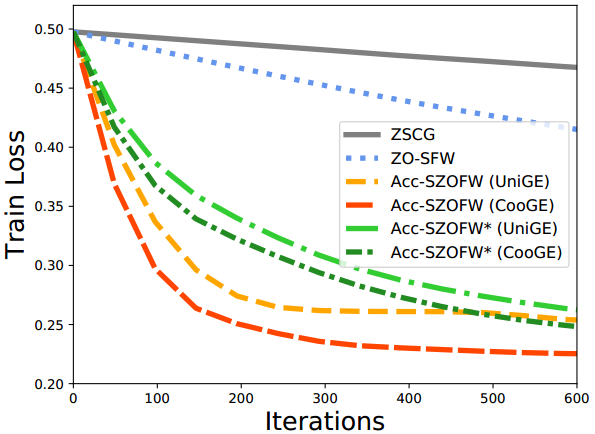}
            \end{minipage}
            \label{fig:rlc_a9a_iteration}
        }
        \subfigure[w8a]{
            \begin{minipage}[b]{0.46\columnwidth}
                \includegraphics[width=1\textwidth]{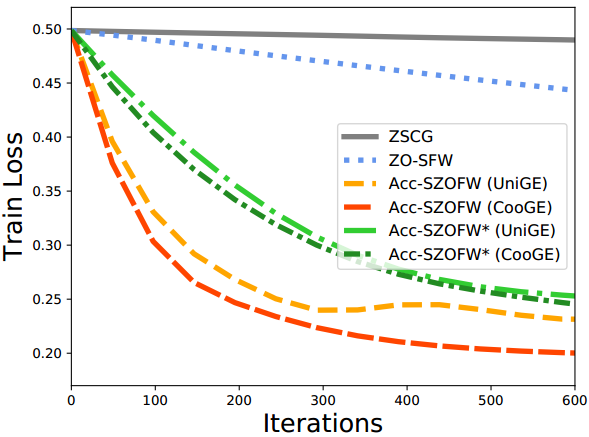}
            \end{minipage}
            \label{fig:rlc_w8a_iteration}
        }
        \subfigure[covtype.binary]{
            \begin{minipage}[b]{0.46\columnwidth}
                \includegraphics[width=1\textwidth]{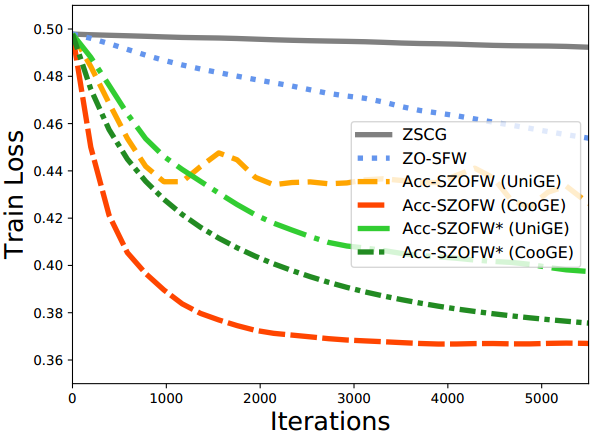}
            \end{minipage}
            \label{fig:rlc_covtype_iteration}
        }

        \subfigure[phishing]{
            \begin{minipage}[b]{0.46\columnwidth}
                \includegraphics[width=1\columnwidth]{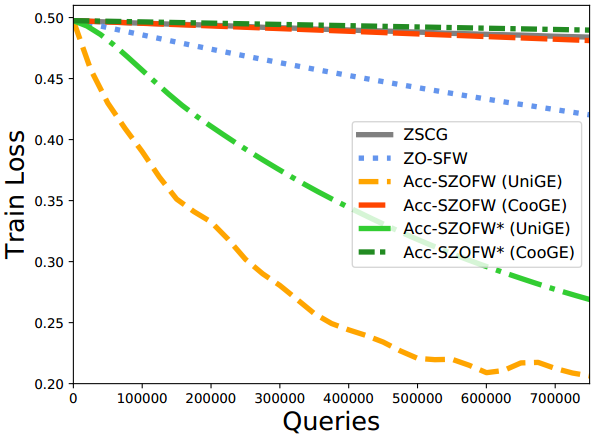}
            \end{minipage}
            \label{fig:rlc_phishing_queries}
        }
        \subfigure[a9a]{
            \begin{minipage}[b]{0.46\columnwidth}
                \includegraphics[width=1\columnwidth]{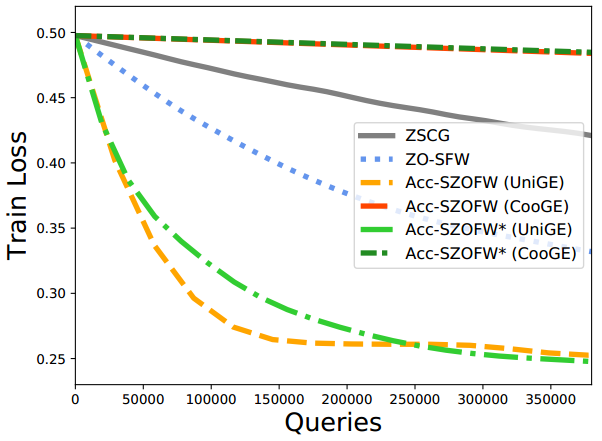}
            \end{minipage}
            \label{fig:rlc_a9a_queries}
        }
        \subfigure[w8a]{
            \begin{minipage}[b]{0.46\columnwidth}
                \includegraphics[width=1\columnwidth]{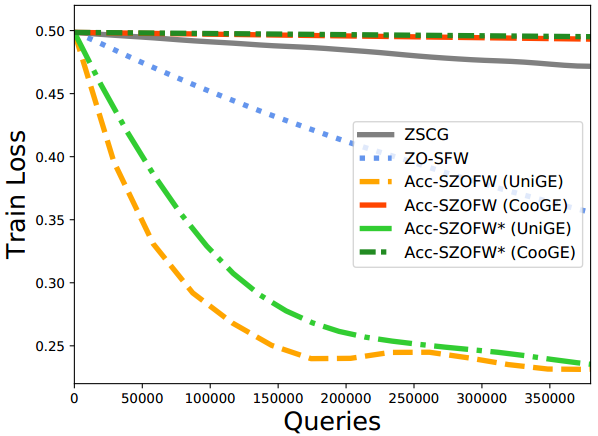}
            \end{minipage}
            \label{fig:rlc_w8a_queries}
        }
        \subfigure[covtype.binary]{
            \begin{minipage}[b]{0.46\columnwidth}
                \includegraphics[width=1\columnwidth]{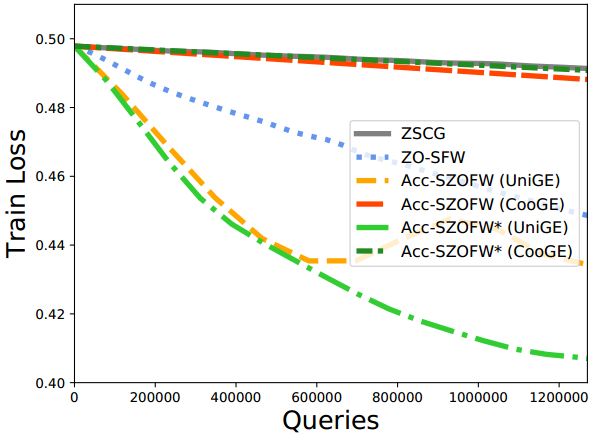}
            \end{minipage}
            \label{fig:rlc_covtype_queries}
        }
	\caption{Comparison of six algorithms for robust black-box binary classification. Above: the convergence of train loss against iterations. Below: the convergence of train loss against queries.}
    \label{fig:rlc-trainloss}
    \end{center}
    \vskip -0.2in
\end{figure*}
\subsection{Robust Black-box Binary Classification}
In this subsection, we apply the proposed algorithms to solve the robust black-box binary classification task.
Given a set of training samples $(a_{i}, l_{i})_{i=1}^{n}$, where $a_i \in \mathbb{R}^d$ and $l_i \in \{-1, +1\}$, we find optimal parameter $x \in \mathbb{R}^d$ by solving the problem:
\begin{equation}
    \min _{x \in \mathbb{R}^{d}} \frac{1}{n} \sum_{i=1}^{n} f_{i}(x), \quad \text { s.t. }\|x\|_{1} \leq \theta,
\end{equation}
where $f_{i}(x)$ is the black-box loss function, that only returns the function value given an input.
Here, we specify the loss function $f_{i}(x)=\frac{\sigma^{2}}{2}(1-\exp (-\frac{(l_{i}-a_{i}^{T} x)^{2}}{\sigma^{2}}))$,
which is the \textit{nonconvex} robust correntropy induced loss.
In the experiment, we use four public real datasets\footnote{These data are from the website \url{https://www.csie.ntu.edu.tw/~cjlin/libsvmtools/datasets/}}.
We set $\sigma=10$ and $\theta=10$.
For fair comparison, we choose the mini-batch size $b=100$ for all stochastic zeroth-order methods.
In the experiment, we use four public real datasets, which are summarized in Table \ref{tab:datasets-rlr}.
For each dataset, we use half of the samples as training data and the rest as testing data.
We elaborate the details of the parameter setting in Appendix \ref{Appendix:A4}.

\begin{table}[htb]
    \caption{Real datasets for black-box binary classification.}
    \label{tab:datasets-rlr}
    \vskip 0.15in
    \begin{center}
    \begin{small}
    \begin{sc}
    \begin{tabular}{lcccr}
    \toprule
    Data set & \#Samples & \#Features & \#Classes \\
    \midrule
    \textit{phishing}        & 11,055   & 68    & 2 \\
    \textit{a9a}             & 32,561   & 123   & 2 \\
    \textit{w8a}             & 49,749   & 300   & 2 \\
    \textit{covtype.binary}  & 581,012  & 54    & 2 \\
    \bottomrule
    \end{tabular}
    \end{sc}
    \end{small}
    \end{center}
    \vskip -0.1in
\end{table}

Figure~\ref{fig:rlc-trainloss} shows that the convergence of six algorithms on the black-box binary classification problem. We see that
the results are similar as in the case of the UAP problem. For all datasets, the results show that all of our accelerated algorithms have faster convergence speeds (i.e. less iteration complexity) than the existing algorithms, while the Acc-SZOFW (UniGE) algorithm and the Acc-SZOFW* (UniGE) have faster convergence speeds (i.e. less function query complexity) than other algorithms (especially ZSCG and ZO-SFW), which further demonstrates the efficiency of our accelerated algorithms. Similar to Figure~\ref{fig:uap}, the periodic jitter of the curve of Acc-SZOFW (UniGE) also appears and seems to be more intense in the covtype.binary dataset. We speculate that this is because the variance of the random gradient estimator is too high in this situation. We also provides the convergence of test loss in Appendix \ref{Appendix:A4},
which is analogous to those of train loss.
\vspace*{-6pt}
\section{Conclusions}
In the paper, we proposed a class of accelerated stochastic gradient-free and projection-free (zeroth-order Frank-Wolfe) methods.
In particular, we also proposed a momentum accelerated framework for the Frank-Wolfe methods.
Specifically, we presented an accelerated stochastic zeroth-order Frank-Wolfe (Acc-SZOFW) method
based on the variance reduced technique of SPIDER and the proposed momentum accelerated technique.
Further, we proposed a novel accelerated stochastic zeroth-order Frank-Wolfe (Acc-SZOFW*) to relax the large mini-batch size required in the Acc-SZOFW. Moreover, both the Acc-SZOFW and Acc-SZOFW* methods obtain a lower query complexity, which improves the state-of-the-art query complexity in both finite-sum and stochastic settings.

\section*{Acknowledgements}
We thank the anonymous reviewers for their valuable comments.
This paper was partially supported by the Natural Science Foundation of China (NSFC) under Grant No.
61806093 and No. 61682281, and the Key Program of
NSFC under Grant No. 61732006, and Jiangsu Postdoctoral Research Grant Program No. 2018K004A.

\bibliography{ZOFW}
\bibliographystyle{icml2020}

%%%%%%%%%%%%%%%%%%%%%%%%%%%%%%%%%%%%%%%%%%%%%%%%%%%%%%%%%%%%%%%%%%%%%%%%%%%%%%%
%%%%%%%%%%%%%%%%%%%%%%%%%%%%%%%%%%%%%%%%%%%%%%%%%%%%%%%%%%%%%%%%%%%%%%%%%%%%%%%
% DELETE THIS PART. DO NOT PLACE CONTENT AFTER THE REFERENCES!
%%%%%%%%%%%%%%%%%%%%%%%%%%%%%%%%%%%%%%%%%%%%%%%%%%%%%%%%%%%%%%%%%%%%%%%%%%%%%%%
%%%%%%%%%%%%%%%%%%%%%%%%%%%%%%%%%%%%%%%%%%%%%%%%%%%%%%%%%%%%%%%%%%%%%%%%%%%%%%%

\clearpage

\begin{onecolumn}
%\appendices
\begin{appendices}

\section{Supplementary materials}
In this section, we first give the convergence analysis of our algorithms.
Next, we further provide detailed experimental setup and additional experimental results.

We first give some useful lemmas for both Acc-SZOFW and Acc-SZOFW* algorithms.

\begin{lemma} \label{lem:A1}
 The sequence $\{x_t,y_t,z_t\}_{t=0}^{T-1}$ is generated from the Acc-SZOFW or Acc-SZOFW* algorithm. Let $\alpha_t = \frac{1}{t+1}$, $\eta=\eta_t$ and $\gamma_t = (1+\theta_t)\eta_t$
 with $\theta_t = \frac{1}{(t+1)(t+2)}$,
 we have
 \begin{align}
  \mathbb{E}[\|z_{t+1}-z_t\|] \leq 2\eta D, \quad \mathbb{E}[\|x_{t+1} - z_{t+1}\| ]\leq \eta D.
 \end{align}
\end{lemma}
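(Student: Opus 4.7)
The plan is to prove both inequalities by first writing out the updates in closed form, reducing $x_{t+1}-z_{t+1}$ to a simple recursion, proving the second bound by induction, and then using that bound in a direct decomposition to get the first.

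First I would rewrite the three updates as convex combinations: $x_{t+1} = (1-\gamma_t)x_t + \gamma_t w_t$, $y_{t+1} = (1-\eta_t)z_t + \eta_t w_t$, and $z_{t+1} = (1-\alpha_{t+1})y_{t+1} + \alpha_{t+1}x_{t+1}$. The key algebraic identity is
\begin{align*}
x_{t+1}-z_{t+1} &= (1-\alpha_{t+1})(x_{t+1}-y_{t+1}),\\
x_{t+1}-y_{t+1} &= (1-\gamma_t)(x_t-z_t) + (\gamma_t-\eta_t)(w_t-z_t),
\end{align*}
which, using $\gamma_t-\eta_t = \theta_t\eta_t$ and the diameter bound $\|w_t-z_t\|\le D$, yields the recursion
\[
\|x_{t+1}-z_{t+1}\| \le (1-\alpha_{t+1})\bigl[(1-\gamma_t)\|x_t-z_t\| + \theta_t\eta D\bigr].
\]

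Next I would prove $\|x_t-z_t\|\le \eta D$ by induction on $t$. The base case is immediate since $x_0=z_0$. For the inductive step, dropping the favorable factor $(1-\gamma_t)\le 1$, plugging in $\alpha_{t+1}=1/(t+2)$ and $\theta_t = 1/((t+1)(t+2))$, and using the inductive hypothesis gives
\[
\|x_{t+1}-z_{t+1}\| \le \frac{t+1}{t+2}\left[\frac{1}{(t+1)(t+2)} + 1\right]\eta D = \frac{(t+1)(t+2)+1}{(t+2)^2}\eta D = \left(1 - \frac{t+1}{(t+2)^2}\right)\eta D \le \eta D,
\]
closing the induction. Taking expectations (the bound is actually pathwise) gives the second claim.

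For the first bound, I would decompose
\[
z_{t+1}-z_t = (1-\alpha_{t+1})(y_{t+1}-z_t) + \alpha_{t+1}(x_{t+1}-z_t) = (1-\alpha_{t+1})\eta_t(w_t-z_t) + \alpha_{t+1}(x_t-z_t) + \alpha_{t+1}\gamma_t(w_t-x_t),
\]
and apply the triangle inequality together with $\|w_t-z_t\|,\|w_t-x_t\|\le D$ and the just-established bound $\|x_t-z_t\|\le \eta D$. This yields
\[
\|z_{t+1}-z_t\| \le \bigl[(1-\alpha_{t+1})\eta + \alpha_{t+1}\eta + \alpha_{t+1}(1+\theta_t)\eta\bigr] D = \bigl[1 + \alpha_{t+1}(1+\theta_t)\bigr]\eta D \le 2\eta D,
\]
using $\alpha_{t+1}\le 1/2$ and $\theta_t\le 1/2$, which finishes the proof.

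The only real obstacle is the recursion's bookkeeping: one must verify that the inductive step closes with the chosen schedules for $\alpha_t, \gamma_t, \theta_t$. The schedule is tuned precisely so that the factor $(1-\alpha_{t+1})(\theta_t + 1)$ is at most $1$, and the $(1-\gamma_t)$ term provides extra slack; everything else reduces to the triangle inequality and compactness of $\mathcal{X}$.
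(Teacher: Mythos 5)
Your proof is correct. It starts from the same key identity as the paper, namely
\begin{align}
x_{t+1}-z_{t+1} = (1-\alpha_{t+1})\bigl[(1-\gamma_t)(x_t-z_t) + (\gamma_t-\eta_t)(w_t-z_t)\bigr], \nonumber
\end{align}
but from there the routes diverge slightly. The paper unrolls this recursion into the explicit sum $x_{t+1}-z_{t+1} = (1-\alpha_{t+1})\sum_{i=0}^t\bigl(\prod_{j=i+1}^t(1-\alpha_j)(1-\gamma_j)\bigr)(\gamma_i-\eta_i)(w_i-z_i)$, discards the products (each factor lies in $[0,1]$), and invokes the telescoping identity $\sum_{i=0}^t\theta_i = 1-\frac{1}{t+2}\leq 1$ to get $\|x_{t+1}-z_{t+1}\|\leq\eta D$; you instead close an induction using the pointwise inequality $(1-\alpha_{t+1})(1+\theta_t)\leq 1$, which is a genuinely different (and slightly cleaner) piece of arithmetic relying on the same tuned schedule. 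For the increment bound, the paper writes $z_{t+1}-z_t = \eta_t(w_t-z_t) + \frac{\alpha_{t+1}}{1-\alpha_{t+1}}(x_{t+1}-z_{t+1})$ and reuses the unrolled sum, whereas you use the three-term decomposition $(1-\alpha_{t+1})\eta_t(w_t-z_t)+\alpha_{t+1}(x_t-z_t)+\alpha_{t+1}\gamma_t(w_t-x_t)$ together with the already-established $\|x_t-z_t\|\leq\eta D$; both land at $2\eta D$ with room to spare. Your version avoids the product-of-factors bookkeeping entirely and makes explicit that the bounds are pathwise, at the cost of not producing the explicit linear-combination formula \eqref{eq:A2}, which the paper reuses later in its momentum interpretation; as a proof of the lemma itself, your argument is complete.
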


\begin{proof}
By using the steps 12 to 14 in Algorithm \ref{alg:1} or the steps 10 to 12 in Algorithm \ref{alg:2}, we have
\begin{align} \label{eq:A1}
 z_{t+1}-x_{t+1} & = (1 - \alpha_{t+1})(y_{t+1}-x_{t+1}) \nonumber \\
 & = (1 - \alpha_{t+1})(z_t + \eta_t(w_t - x_t) - x_{t} - \gamma_t(w_t-x_t)) \nonumber \\
 & = (1 - \alpha_{t+1})(1-\gamma_t)(z_t-x_t) + (1 - \alpha_{t+1})(\eta_t-\gamma_t)(w_t-z_t).
\end{align}
By recursion to the above equality \eqref{eq:A1},
we have
\begin{align} \label{eq:A2}
 x_{t+1} - z_{t+1} = (1-\alpha_{t+1}) \sum_{i=0}^t\big( \prod_{j=i+1}^t (1-\alpha_j)(1-\gamma_j)\big) (\gamma_i-\eta_i)(w_i-z_i).
\end{align}
According to \eqref{eq:A2}, we can obtain
\begin{align} \label{eq:A3}
 z_{t+1} - z_t & = \eta_t(w_t - z_t) - \alpha_{t+1}(y_{t+1} - x_{t+1}) \nonumber \\
 & = \eta_t(w_t - z_t) + \frac{\alpha_{t+1}}{1-\alpha_{t+1}}(x_{t+1} - z_{t+1}) \nonumber \\
 & = \eta_t(w_t - z_t) + \alpha_{t+1}\sum_{i=0}^t\big( \prod_{j=i+1}^t (1-\alpha_j)(1-\gamma_j)\big) (\gamma_i-\eta_i)(w_i-z_i).
\end{align}
By \eqref{eq:A3}, we have
\begin{align} \label{eq:A4}
 \|z_{t+1}-z_t\| & \leq \eta_t\|w_t - z_t\| + \alpha_{t+1}\|\sum_{i=0}^t\big( \prod_{j=i+1}^t (1-\alpha_j)(1-\gamma_j)\big) (\gamma_i-\eta_i)(w_i-z_i)\| \nonumber \\
 & \leq  \eta_t\|w_t - z_t\| + \alpha_{t+1}\sum_{i=0}^t\|\big( \prod_{j=i+1}^t (1-\alpha_j)(1-\gamma_j)\big) (\gamma_i-\eta_i)(w_i-z_i)\| \nonumber \\
 & \leq  \eta_t\|w_t - z_t\| + \alpha_{t+1}\sum_{i=0}^t\| (\gamma_i-\eta_i)(w_i-z_i)\| \nonumber \\
 & =  \eta_t\|w_t - z_t\| + \alpha_{t+1}\eta\sum_{i=0}^t \theta_i \| w_i-z_i\| \nonumber \\
 & \leq 2\eta D,
\end{align}
where the third inequality follows by $\prod_{j=i+1}^t (1-\alpha_j)(1-\gamma_j) \in [0,1]$
due to that $0 \leq \alpha_j \leq 1$ and $0<\gamma_j <1$, and the above equality follows by $\gamma_i = (1+\theta_i)\eta_i$,
and the last inequality holds by Assumption 3 and $\alpha_{t+1}=\frac{1}{t+2}$, $\theta_i=\frac{1}{(i+1)(i+2)}\in (0,1)$.
Meanwhile, by \eqref{eq:A2}, we have
\begin{align} \label{eq:A5}
 \|x_{t+1} - z_{t+1}\| & = \|(1-\alpha_{t+1}) \sum_{i=0}^t\big( \prod_{j=i+1}^t (1-\alpha_j)(1-\gamma_j)\big) (\gamma_i-\eta_i)(w_i-z_i)\| \nonumber \\
 & \leq \sum_{i=0}^t\|\big( \prod_{j=i+1}^t (1-\alpha_j)(1-\gamma_j)\big) (\gamma_i-\eta_i)(w_i-z_i)\| \nonumber \\
 & \leq \sum_{i=0}^t\| (\gamma_i-\eta_i)(w_i-z_i)\| \leq \eta \sum_{i=0}^t \theta_i \|w_i-z_i\|\nonumber \\
 & \leq \eta \sum_{i=0}^t \theta_i D \leq \eta D,
\end{align}
where the first inequality holds by Triangle inequality and $\alpha_{t+1}\in [0,1]$, and the second inequality follows by $\prod_{j=i+1}^t (1-\alpha_j)(1-\gamma_j) \in [0,1]$
due to that $0 \leq \alpha_j \leq 1$ and $0<\gamma_j <1$, and
the last inequality follows by $\sum_{i=0}^t \theta_i = \sum_{i=0}^t\frac{1}{(i+1)(i+2)}=1-\frac{1}{t+2}\leq 1$.
\end{proof}

\begin{lemma}  \label{lem:A2}
 Suppose that random variables $\zeta_1, \cdots, \zeta_m$ are independent and its individual mean is $0$, \emph{i.e.,} $\mathbb{E}[\zeta_i]=0$ for $i=1,\cdots,m$,
 we have
\begin{align}
 \mathbb{E}[ \|\zeta_1+\cdots+\zeta_m\|^2 ] = \mathbb{E}[ \|\zeta_1\|^2 + \cdots + \|\zeta_m\|^2].
\end{align}
\end{lemma}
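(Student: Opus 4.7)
The plan is to establish this identity via a direct bilinear expansion of the squared norm, followed by elimination of the cross terms using independence and the zero-mean hypothesis. First I would rewrite the squared norm of the sum as an inner product and expand it, yielding
\begin{align}
\|\zeta_1 + \cdots + \zeta_m\|^2 = \sum_{i=1}^m \|\zeta_i\|^2 + \sum_{i \neq j} \langle \zeta_i, \zeta_j \rangle,  \nonumber
\end{align}
which follows purely from the bilinearity of the inner product on $\mathbb{R}^d$ and requires no probabilistic input.

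Next I would take expectations on both sides and invoke linearity of expectation to pull the sum outside. The diagonal terms then already match the right-hand side of the claimed identity, so the whole proof reduces to verifying that each off-diagonal expectation $\mathbb{E}[\langle \zeta_i, \zeta_j\rangle]$ vanishes for $i \neq j$. Writing out the inner product coordinate-wise as $\langle \zeta_i, \zeta_j\rangle = \sum_{k=1}^d \zeta_i^{(k)} \zeta_j^{(k)}$, I would apply independence of $\zeta_i$ and $\zeta_j$ componentwise to factor $\mathbb{E}[\zeta_i^{(k)} \zeta_j^{(k)}] = \mathbb{E}[\zeta_i^{(k)}] \cdot \mathbb{E}[\zeta_j^{(k)}]$, and then invoke the hypothesis $\mathbb{E}[\zeta_i] = 0$ to conclude that every such product is zero. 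Summing over $k$ gives $\mathbb{E}[\langle \zeta_i, \zeta_j\rangle] = \langle \mathbb{E}[\zeta_i], \mathbb{E}[\zeta_j]\rangle = 0$.

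Honestly, there is no real obstacle here; the lemma is a standard textbook fact restated for vector-valued variables. The only minor point worth flagging is that we are implicitly assuming the $\zeta_i$ are square-integrable (so that the expectations on both sides make sense and Fubini applies when swapping the finite sum and expectation), which is compatible with the contexts in Lemma \ref{lem:A1} and the algorithm analyses where this lemma is invoked. With those integrability conditions in hand, the proof collapses to one line of expansion plus one line of independence, and the stated equality follows.
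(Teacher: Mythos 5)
Your proof is correct and follows essentially the same route as the paper's: expand the squared norm bilinearly, take expectations, and kill the cross terms $\mathbb{E}[\langle \zeta_i,\zeta_j\rangle]$ using independence and the zero-mean hypothesis. Your version is slightly more careful than the paper's (which writes the cross terms loosely as $\mathbb{E}[\zeta_i\zeta_j]$ without making the inner product explicit), but the argument is identical in substance.
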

\begin{proof}
 It is easy verified that
 \begin{align}
  \mathbb{E}[ \|\zeta_1+\cdots+\zeta_m\|^2 ] & = \sum_{i, j=1}^m \mathbb{E}[\zeta_i\zeta_j]
  = \mathbb{E}[ \|\zeta_1\|^2 + \cdots + \|\zeta_m\|^2] + \sum_{i\neq j} \mathbb{E}[\zeta_i\zeta_j] \nonumber \\
  & = \mathbb{E}[ \|\zeta_1\|^2 + \cdots + \|\zeta_m\|^2],
 \end{align}
 where the last inequality holds by the fact that random variables $\zeta_1, \cdots, \zeta_m$ are independent and its individual mean is $0$.
\end{proof}

Next, we review some useful lemmas.
\begin{lemma} \label{lem:A3}
(Lemma 3 in \cite{ji2019improved}) For any $L$-smooth function $f(x)$, given the zeroth-order gradient
 $\hat{\nabla}_{coo}f(x) = \sum_{j=1}^d \frac{f(x + \mu e_j)-f(x - \mu e_j)}{2\mu}e_j$, for any parameter $\mu>0$ and
any $x\in \mathbb{R}^d$, we have
\begin{align}
\|\hat{\nabla}_{coo}f(x) - \nabla f(x)\|^2 \leq L^2d\mu^2.
\end{align}
\end{lemma}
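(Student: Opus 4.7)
The statement decouples nicely along the coordinate axes, so my plan is to reduce the problem to a per-coordinate error bound and then sum. Concretely, since $\hat{\nabla}_{coo} f(x) - \nabla f(x) = \sum_{j=1}^d \bigl(\tfrac{f(x+\mu e_j)-f(x-\mu e_j)}{2\mu} - \partial_j f(x)\bigr) e_j$ and the $\{e_j\}$ are orthonormal, I would expand the squared Euclidean norm as the sum of the squared coordinate errors:
\begin{equation*}
\|\hat{\nabla}_{coo} f(x) - \nabla f(x)\|^2 = \sum_{j=1}^d \Bigl(\tfrac{f(x+\mu e_j) - f(x-\mu e_j)}{2\mu} - \partial_j f(x)\Bigr)^2.
\end{equation*}
The entire proof then hinges on controlling a single scalar central-difference error.

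For the per-coordinate bound, the key tool is the standard two-sided consequence of $L$-smoothness (Assumption 1): for all $x,y$ one has $|f(y) - f(x) - \nabla f(x)^T(y-x)| \le \tfrac{L}{2}\|y-x\|^2$. Applying this with $y = x \pm \mu e_j$ (so $\|y-x\|=\mu$) yields
\begin{align*}
\bigl|f(x+\mu e_j) - f(x) - \mu\,\partial_j f(x)\bigr| &\le \tfrac{L}{2}\mu^2, \\
\bigl|f(x-\mu e_j) - f(x) + \mu\,\partial_j f(x)\bigr| &\le \tfrac{L}{2}\mu^2.
\end{align*}
Subtracting the two quantities inside the absolute values (and using the triangle inequality), the $f(x)$ and $\mu\,\partial_j f(x)$ terms combine so that $|f(x+\mu e_j) - f(x-\mu e_j) - 2\mu\,\partial_j f(x)| \le L\mu^2$. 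Dividing by $2\mu$ gives a per-coordinate error of at most $L\mu/2$ in absolute value, and hence at most $L^2\mu^2/4$ in square.

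Plugging this back into the coordinate decomposition yields $\|\hat{\nabla}_{coo} f(x) - \nabla f(x)\|^2 \le d\cdot L^2\mu^2/4 \le L^2 d \mu^2$, which is the claimed bound (and in fact slightly stronger). There is essentially no obstacle here: the only thing to be careful about is invoking the \emph{two-sided} form of smoothness rather than just the upper quadratic bound, since the descent lemma alone would not let me subtract cleanly. Everything else is a mechanical application of the triangle inequality and the orthonormality of the standard basis.
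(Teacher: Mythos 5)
Your proof is correct, and in fact it establishes the sharper bound $\|\hat{\nabla}_{coo}f(x)-\nabla f(x)\|^2 \le \tfrac{L^2 d\mu^2}{4}$, of which the stated inequality is a weakening. Note, however, that the paper does not prove this lemma at all: it imports it verbatim as Lemma 3 of \cite{ji2019improved} and only ``reviews'' it in the appendix, so there is no internal proof to compare against. Your argument is the standard one and is the same as in that reference: decompose the error coordinatewise using orthonormality of $\{e_j\}$, control each central-difference term by applying the two-sided quadratic bound $|f(y)-f(x)-\nabla f(x)^T(y-x)|\le \tfrac{L}{2}\|y-x\|^2$ at $y=x\pm\mu e_j$, and subtract so that the zeroth- and first-order terms cancel, leaving a per-coordinate error of at most $L\mu/2$. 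You are also right to flag that the two-sided bound (a consequence of the gradient Lipschitz condition in Assumption 1) is what is needed here, since the one-sided descent inequality alone would not allow the cancellation. The only cosmetic remark is that the lemma is stated for an arbitrary $L$-smooth $f$ on $\mathbb{R}^d$, so the smoothness inequality should be invoked for that $f$ globally rather than via Assumption 1 restricted to $\mathcal{X}$; this does not affect the argument.
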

\begin{lemma} \label{lem:A4}
(Lemma 5 in \cite{ji2019improved}) Let $f_{\beta}(x)=\mathbb{E}_{u\sim U_B}[f(x+\beta u)]$ be a smooth approximation of $f(x)$,
where $U_B$ is the uniform distribution over the $d$-dimensional unit Euclidean ball $B$. Then we have
\begin{itemize}
 \vspace*{-4pt}
\item[(1)] $|f_{\beta}(x) - f(x)| \leq \frac{\beta^2 L}{2}$ and $\|\nabla f_{\beta}(x) - \nabla f(x)\| \leq \frac{\beta Ld}{2}$ for any $x\in \mathbb{R}^d$;
 \vspace*{-4pt}
\item[(2)] $\mathbb{E}[\frac{1}{|\mathcal{S}|}\sum_{j\in \mathcal{S}}\hat{\nabla}_{uni}f_j(x)] = \nabla f_{\beta}(x)$ for any $x\in \mathbb{R}^d$;
 \vspace*{-4pt}
\item[(3)] $\mathbb{E}\|\hat{\nabla}_{uni}f_j(x)-\hat{\nabla}_{uni}f_j(y)\|^2 \leq 3dL^2\|x-y\|^2 + \frac{3L^2d^2\beta}{2}$ for any $x, y \in \mathbb{R}^d$ and any $j$ draw from certain distribution.
\end{itemize}
\end{lemma}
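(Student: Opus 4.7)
The plan is to prove each of the three parts of Lemma~\ref{lem:A4} separately, exploiting the rotational symmetry of the uniform measure on the ball $B$ and on the sphere $S^{d-1}$ together with the $L$-smoothness of each $f_j$.

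For part (1), the function-approximation bound follows from a first-order Taylor expansion: $L$-smoothness gives $|f(x+\beta u)-f(x)-\beta\nabla f(x)^{\top}u|\le L\beta^2\|u\|^2/2$, and since $\mathbb{E}_{u\sim U_B}[u]=0$ by symmetry, the linear term vanishes upon taking expectations and the $L\beta^2/2$ bound drops out. For the gradient bound I would invoke the Stokes/divergence identity $\nabla f_\beta(x)=\frac{d}{\beta}\mathbb{E}_{v\sim S^{d-1}}[f(x+\beta v)\,v]$, which converts the ball-smoothing into a spherical expectation; combined with $\mathbb{E}_{v\sim S^{d-1}}[vv^{\top}]=I_d/d$, the leading Taylor term reproduces $\nabla f(x)$ exactly, and the quadratic remainder, once bounded by $L$-smoothness, yields the $\beta Ld/2$ estimate.

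For part (2), the unbiasedness is the same Stokes identity applied to $f_j$: the $-f_j(x)\,u$ piece of the estimator has mean zero, so $\mathbb{E}_u[\hat{\nabla}_{uni} f_j(x)]=\frac{d}{\beta}\mathbb{E}_{u\sim S^{d-1}}[f_j(x+\beta u)u]=\nabla f_{j,\beta}(x)$, and linearity of expectation extends the identity to the mini-batch average over $\mathcal{S}$.

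Part (3) is the most delicate and is where I expect the main obstacle. The key decomposition is
\[
\hat{\nabla}_{uni} f_j(x)=d\,uu^{\top}\nabla f_j(x)+\tfrac{d}{\beta}\,r_j(x;u)\,u,\qquad r_j(x;u):=f_j(x+\beta u)-f_j(x)-\beta\nabla f_j(x)^{\top}u,
\]
which cleanly separates the estimator into a \emph{linearized} piece whose covariance is controlled via $\mathbb{E}_u[uu^{\top}]=I_d/d$ and a smoothness residual bounded pointwise by $|r_j(x;u)|\le L\beta^2/2$. Subtracting the corresponding decomposition at $y$ and applying $\|a+b\|^2\le 3\|a\|^2+3\|b\|^2$ splits $\mathbb{E}\|\hat{\nabla}_{uni} f_j(x)-\hat{\nabla}_{uni} f_j(y)\|^2$ into two contributions: (i) $\mathbb{E}\|d\,uu^{\top}(\nabla f_j(x)-\nabla f_j(y))\|^2$, which equals $d\,\|\nabla f_j(x)-\nabla f_j(y)\|^2\le dL^2\|x-y\|^2$ by the rank-one identity for $uu^\top$ together with $L$-smoothness, and (ii) $\tfrac{d^2}{\beta^2}\mathbb{E}[(r_j(x;u)-r_j(y;u))^2]$, which inherits the smoothness-residual bound. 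Recombining these with the factor $3$ yields the claimed $3dL^2\|x-y\|^2+\tfrac{3L^2d^2\beta}{2}$ estimate. The non-trivial step is precisely the argument that the linearization reduces the prefactor from the naive $d^2L^2$ (which a direct Cauchy-Schwarz on $\frac{d}{\beta}(g(x)-g(y))u$ would give) down to $dL^2$; this improvement is exactly what the identity $\mathbb{E}_u[uu^{\top}]=I_d/d$ buys us, and it is the whole reason the ensuing convergence rates avoid an extra factor of $d$.
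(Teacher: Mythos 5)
The paper itself offers no proof of this lemma: it is imported verbatim from Lemma~5 of \cite{ji2019improved}, with part~(3) described only as ``an extended result'' of that lemma, so there is no in-paper argument to compare yours against. Your reconstruction is the standard one and is correct in all three parts; in particular, your identification of the rank-one identity $\mathbb{E}_u[uu^{\top}]=I_d/d$ as the mechanism that reduces the leading prefactor from $d^2L^2$ to $dL^2$ is exactly the right emphasis, and parts~(1) and~(2) go through as you describe via the sphere--ball (Stokes) identity and $\mathbb{E}_{u\sim U_B}[u]=0$. Two points of precision on part~(3). First, your two-term split $\|a+b\|^2\le 3\|a\|^2+3\|b\|^2$ followed by the pointwise bound $|r_j(x;u)-r_j(y;u)|\le L\beta^2$ yields $3L^2d^2\beta^2$ for the residual contribution, a factor of $2$ looser than the stated constant; to land exactly on $\tfrac{3}{2}L^2d^2\beta^2$ you should keep the two residuals separate and apply the three-term inequality $\|a+b+c\|^2\le 3(\|a\|^2+\|b\|^2+\|c\|^2)$ with $b=\tfrac{d}{\beta}r_j(x;u)u$ and $c=-\tfrac{d}{\beta}r_j(y;u)u$, each contributing $\tfrac{d^2}{\beta^2}\cdot\tfrac{L^2\beta^4}{4}$. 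This is cosmetic and affects no rate. Second, your derivation produces $\beta^2$ in that term, whereas the lemma as printed reads $\tfrac{3L^2d^2\beta}{2}$; this is evidently a typographical error in the statement, since the paper itself substitutes $\tfrac{3L^2d^2\beta^2}{2}$ when invoking the lemma in the proofs of Lemmas~\ref{lem:D1} and~\ref{lem:G1}, and your version is the dimensionally consistent one.
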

Note that the result (3) of Lemma \ref{lem:A4} is an extended result from Lemma 5 in \cite{ji2019improved}.
\subsection{ Convergence Analysis of the Acc-SZOFW Algorithm }
\label{Appendix:A1}
In this subsection, we study the convergence properties of the Acc-SZOFW Algorithm based on the CooGE and UniGE,
respectively. We first study the convergence properties of the deterministic \textbf{Acc-ZO-FW} Algorithm as a baseline,
which is Algorithm \ref{alg:1} using the \textbf{deterministic} zeroth-order gradient $v_t=\frac{1}{n}\sum_{i=1}^n \hat{\nabla}_{coo} f_i(z_t)$
for solving the \textbf{finite-sum} problem \eqref{eq:1}.

\subsubsection{ Convergence Analysis of the Acc-ZO-FW Algorithm }
\begin{theorem}
Suppose $\{x_t,y_t,z_t\}^{T-1}_{t=0}$ be generated from Algorithm \ref{alg:1} by using the \textbf{deterministic}
zeroth-order gradient $v_t=\frac{1}{n}\sum_{i=1}^n \hat{\nabla}_{coo} f_i(z_t)$,
and let $\alpha_t = \frac{1}{t+1}$, $\theta_t = \frac{1}{(t+1)(t+2)}$,
$\gamma_t = (1+\theta_t)\eta_t$, $\eta = \eta_t= T^{-\frac{1}{2}}$, $\mu=d^{-\frac{1}{2}}T^{-\frac{1}{2}}$,
then we have
\begin{align}
\frac{1}{T}\sum_{t=1}^{T-1}\mathcal{G}(z_{t}) \leq O(\frac{1}{T^{\frac{1}{2}}}) + O(\frac{\ln(T)}{T^{\frac{3}{2}}}).
\end{align}
\end{theorem}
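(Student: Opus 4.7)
The plan is to derive a one-step descent inequality for $f(z_t)$ via $L$-smoothness of $f$, convert the resulting Frank-Wolfe "inner-product" term to the gap $\mathcal{G}(z_t)$ using the optimality of $w_t$, then telescope. The core estimate I would rely on is the decomposition of $z_{t+1}-z_t$ already established inside the proof of Lemma \ref{lem:A1} (equation (A3)):
\begin{equation*}
z_{t+1}-z_t \;=\; \eta_t(w_t-z_t) \;+\; M_t,\qquad M_t \;:=\; \alpha_{t+1}\!\!\sum_{i=0}^t\!\Big(\!\prod_{j=i+1}^t\!(1{-}\alpha_j)(1{-}\gamma_j)\Big)(\gamma_i-\eta_i)(w_i-z_i),
\end{equation*}
together with the norm bounds $\|z_{t+1}-z_t\|\le 2\eta D$ and $\|M_t\|\le \alpha_{t+1}\eta D$ that fall out of the same computation (recall $\gamma_i-\eta_i=\eta\theta_i$ with $\sum_i\theta_i\le 1$).

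First, I would apply $L$-smoothness to write $f(z_{t+1})-f(z_t)\le \langle\nabla f(z_t),z_{t+1}-z_t\rangle+\tfrac{L}{2}\|z_{t+1}-z_t\|^2$ and split the inner product along the decomposition above. For the principal Frank-Wolfe piece $\eta_t\langle\nabla f(z_t),w_t-z_t\rangle$, I would inject $v_t$ and use the optimality of $w_t=\arg\max_{w\in\mathcal{X}}\langle w,-v_t\rangle$ to rewrite $\langle w_t-z_t,-v_t\rangle=\mathcal{G}_{v_t}(z_t):=\max_{w\in\mathcal{X}}\langle w-z_t,-v_t\rangle$. The elementary inequality $\mathcal{G}(z_t)\le \mathcal{G}_{v_t}(z_t)+D\|\nabla f(z_t)-v_t\|$ (by Cauchy-Schwarz and Assumption 3) then yields
\begin{equation*}
\eta_t\langle\nabla f(z_t),w_t-z_t\rangle \;\le\; -\eta_t\,\mathcal{G}(z_t) \;+\; 2\eta_t D\,\|\nabla f(z_t)-v_t\|.
\end{equation*}

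Second, I would control the two remaining errors. Since $v_t=\tfrac1n\sum_i\hat{\nabla}_{coo}f_i(z_t)=\hat{\nabla}_{coo}f(z_t)$, Lemma \ref{lem:A3} gives $\|\nabla f(z_t)-v_t\|\le L\sqrt{d}\mu$ deterministically. The momentum cross-term is bounded via $|\langle\nabla f(z_t),M_t\rangle|\le G\,\|M_t\|\le GD\alpha_{t+1}\eta$, where $G:=\sup_{z\in\mathcal{X}}\|\nabla f(z)\|<\infty$ by smoothness plus compactness of $\mathcal{X}$. Finally $\tfrac{L}{2}\|z_{t+1}-z_t\|^2\le 2L\eta^2 D^2$ from Lemma \ref{lem:A1}.

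Third, I would telescope the one-step inequality from $t=0$ to $T-1$, invoke Assumption 4 to bound $f(z_0)-f(z_T)\le\triangle$, and use $\sum_{t=0}^{T-1}\alpha_{t+1}=\sum_{t=0}^{T-1}\frac{1}{t+2}=O(\ln T)$ to get
\begin{equation*}
\frac{1}{T}\sum_{t=0}^{T-1}\mathcal{G}(z_t) \;\le\; \frac{\triangle}{T\eta} \;+\; 2DL\sqrt{d}\,\mu \;+\; O\!\Big(\frac{\ln T}{T}\Big) \;+\; 2LD^2\eta.
\end{equation*}
Plugging in $\eta=T^{-1/2}$ and $\mu=d^{-1/2}T^{-1/2}$ makes every constant-factor term $O(T^{-1/2})$, giving the advertised rate.

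The step I expect to be trickiest is the momentum cross-term $\sum_t\langle\nabla f(z_t),M_t\rangle$. A direct Cauchy-Schwarz only yields $O(\ln T/T)$ after normalization, which is already subsumed by the $O(T^{-1/2})$ leading term but does not quite match the $O(\ln T/T^{3/2})$ lower-order term advertised in the theorem. Recovering that sharper constant would require exploiting the additional $\theta_i=\Theta(1/i^2)$ decay inside $M_t$ and/or telescoping cancellations across consecutive momentum corrections; I would attempt a summation-by-parts on $\sum_t\langle\nabla f(z_t),M_t\rangle$ using the factorization $\gamma_i-\eta_i=\eta\theta_i$ and the recursive product structure of $M_t$ to pull out an extra factor of $\eta$, which is exactly what is needed to convert $O(\ln T/T)$ into $O(\eta\ln T/T)=O(\ln T/T^{3/2})$.
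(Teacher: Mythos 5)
Your proposal is correct and would establish the theorem, but it diverges from the paper's proof at the one genuinely delicate step: the momentum correction. The paper does not use the decomposition $z_{t+1}-z_t=\eta_t(w_t-z_t)+M_t$ directly in the descent inequality; instead it splits $\langle \nabla f(z_t),z_{t+1}-z_t\rangle$ through $y_{t+1}$ and $x_{t+1}$, isolates the term $\alpha_{t+1}(1-\gamma_t)\langle\nabla f(z_t),x_t-z_t\rangle$, and unrolls it by the recursion \eqref{eq:A1}, re-applying the Frank--Wolfe-gap inequality at every earlier iterate and discarding the nonnegative $\mathcal{G}(z_i)$ terms. This produces the double sum $\sum_t\alpha_{t+1}\sum_{i<t}\theta_i\|\nabla f(z_i)-v_i\|$, which after swapping the order of summation and using $\sum_{i=t+1}^{T-1}\alpha_{i+1}\le\ln(T/(t+1))$ yields the $O(\ln T/T^{3/2})$ secondary term with constants depending only on $L,D,d,\mu$. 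Your route instead bounds $|\langle\nabla f(z_t),M_t\rangle|\le G\|M_t\|\le GD\alpha_{t+1}\eta$ with $G:=\sup_{z\in\mathcal{X}}\|\nabla f(z)\|$, which is finite by smoothness plus compactness but is a constant the paper never needs to introduce; as you note, this gives only $O(\ln T/T)$ for the momentum contribution. Since $\ln T/T=o(T^{-1/2})$ this is absorbed by the leading term and the stated conclusion still follows, so the loss is purely in the sharpness of the lower-order term (and your suggested summation-by-parts to recover the extra factor of $\eta$ is essentially a rediscovery of what the paper's recursive reuse of the gap inequality accomplishes). The rest of your argument --- the $L$-smoothness descent step, the conversion $\langle\nabla f(z_t),w_t-z_t\rangle\le 2D\|\nabla f(z_t)-v_t\|-\mathcal{G}(z_t)$ via the optimality of $w_t$, the deterministic bound $\|\nabla f(z_t)-v_t\|\le L\sqrt{d}\mu$ from Lemma~\ref{lem:A3}, and the telescoping with Assumption~4 --- matches the paper's proof up to a harmless factor of $2$ in the gap-perturbation bound.
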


\begin{proof}
By using the Assumption 1, \emph{i.e.,} $f(x)$ is L-smooth, we have
\begin{align} \label{eq:H1}
 f(z_{t+1}) &\le f(z_t) + \langle \nabla f(z_t), z_{t+1} - z_k \rangle + \frac{L}{2} \|z_{t+1} - z_t \|^2 \nonumber \\
&= f(z_t) + (1 -\alpha_{t+1}) \langle \nabla f(z_t), y_{t+1} - z_t\rangle  + \alpha_{t+1} \langle \nabla f(z_t), x_{t+1} - z_t \rangle + \frac{L}{2} \|z_{t+1} - z_t \|^2 \nonumber \\
&= f(z_t) + ((1 -\alpha_{t+1}) \eta_t + \alpha_{t+1} \gamma_t) \langle \nabla f(z_t),  w_t - z_t \rangle + \alpha_{t+1} (1 - \gamma_t) \langle \nabla f(z_t), x_t - z_t \rangle \nonumber \\
&  \quad + \frac{L}{2} \|z_{t+1} - z_t\|^2,
\end{align}
where the first equality holds by $z_{t+1} = (1 - \alpha_{t+1}) y_{t+1} + \alpha_{t+1} x_{t+1}$, and the last equality holds by
$x_{t+1}=x_t+\gamma_t(w_t-x_t)$ and $y_{t+1}=z_t+\eta_t(w_t-z_t)$.

Let $\hat{w}_t = \arg\max_{w\in \mathcal{X}} \langle w, -\nabla f(z_t)\rangle = \arg\min_{w\in \mathcal{X}} \langle w, \nabla f(z_t)\rangle$, we have
\begin{align} \label{eq:H2}
 \langle \nabla f(z_t), w_t - z_t\rangle & = \langle \nabla f(z_t) - v_t, w_t - z_t\rangle + \langle v_t, w_t - z_t\rangle \nonumber \\
 & \leq \langle  \nabla f(z_t) - v_t, w_t - z_t\rangle + \langle v_t, \hat{w}_t - z_t\rangle \nonumber \\
 & = \langle  \nabla f(z_t) - v_t, w_t - \hat{w}_t\rangle + \langle \nabla f(z_t), \hat{w}_t - z_t\rangle \nonumber \\
 & = \langle  \nabla f(z_t) - v_t, w_t - \hat{w}_t\rangle - \mathcal{G}(z_t) \nonumber \\
 & \leq D\|\nabla f(z_t) - v_t\| - \mathcal{G}(z_t),
\end{align}
where the first inequality holds by the step 11 of Algorithm \ref{alg:1}, and the third equality holds by the definition of Frank-Wolfe gap
$\mathcal{G}(z_t)=\max_{w\in \mathcal{X}}\langle w-z_t,-\nabla f(z_t)\rangle=\langle \hat{w}_t-z_t,-\nabla f(z_t)\rangle$,
and the last inequality follows by Cauchy-Schwarz inequality and Assumption 3.

Next, we consider the upper bound of $\langle \nabla f(z_t), x_t - z_t \rangle$. We have
\begin{align} \label{eq:H3}
& \langle \nabla f(z_t), x_t - z_t \rangle =  \langle \nabla f(z_t)-\nabla f(z_{t-1}), x_t - z_t \rangle + \langle \nabla f(z_{t-1}), x_t - z_t \rangle \nonumber \\
 & \leq \|\nabla f(z_t)-\nabla f(z_{t-1})\| \|x_t - z_t\| + \langle \nabla f(z_{t-1}), x_t - z_t \rangle \nonumber \\
 & \leq L\|z_t-z_{t-1}\| \|x_t - z_t\| + \langle \nabla f(z_{t-1}), x_t - z_t \rangle \nonumber \\
 & \leq 2 L\eta^2 D^2   + \langle \nabla f(z_{t-1}), x_t - z_t \rangle  \nonumber \\
 & = 2 L\eta^2 D^2 + (1 - \alpha_{t})(1-\gamma_{t-1})\langle \nabla f(z_{t-1}),z_{t-1}-x_{t-1}\rangle + (1 - \alpha_{t})(\eta_{t-1}-\gamma_{t-1})\langle \nabla f(z_{t-1}), w_{t-1}-z_{t-1} \rangle \nonumber \\
 & \leq 2 L\eta^2 D^2 + (1 - \alpha_{t})(1-\gamma_{t-1})\langle \nabla f(z_{t-1}),z_{t-1}-x_{t-1}\rangle + (1 - \alpha_{t})\theta_{t-1}\eta_{t-1}\big( D\|\nabla f(z_{t-1}) - v_{t-1}\|
 - \mathcal{G}(z_{t-1}) \big)\nonumber \\
  & \leq 2 L\eta^2 D^2  + \langle \nabla f(z_{t-1}),z_{t-1}-x_{t-1}\rangle + \theta_{t-1}\eta_{t-1}D\|\nabla f(z_{t-1}) - v_{t-1}\|
\end{align}
where the first inequality holds by Cauchy-Schwarz inequality, and the third inequality holds by Lemma \ref{lem:A1}, and the second equality follows by the above equality \eqref{eq:A1},
and the forth inequality holds by the inequality \eqref{eq:H2}, and the last inequality follows by $\mathcal{G}(z_{t-1}) \geq 0$ and $0\leq 1 - \alpha_{t}\leq 1$.
By recursion to \eqref{eq:H3}, we can obtain
\begin{align} \label{eq:H4}
 \langle \nabla f(z_t), x_t - z_t \rangle \leq 2 t L\eta^2 D^2 + \eta D\sum_{i=0}^{t-1}\theta_{i}\|\nabla f(z_{i}) - v_{i}\|
\end{align}

By using the above inequalities \eqref{eq:H1}, \eqref{eq:H2} and \eqref{eq:H4}, we have
\begin{align} \label{eq:H5}
 f(z_{t+1}) &\le f(z_t) + ((1 -\alpha_{t+1}) \eta_t + \alpha_{t+1} \gamma_t) \langle \nabla f(z_t),  w_t - z_t \rangle + \alpha_{t+1} (1 - \gamma_t) \langle \nabla f(z_t), x_t - z_t \rangle \nonumber \\
&  \quad + \frac{L}{2} \|z_{t+1} - z_t\|^2 \nonumber \\
& \le f(z_t)+ \frac{L}{2} \|z_{t+1} - z_t\|^2 - ((1 -\alpha_{t+1}) \eta_t + \alpha_{t+1} \gamma_t)\mathcal{G}(z_{t}) + ((1 -\alpha_{t+1}) \eta_t + \alpha_{t+1} \gamma_t)D\|\nabla f(z_t) - v_t\| \nonumber \\
& \quad +  2t\alpha_{t+1} (1 - \gamma_t) L\eta^2 D^2 + \alpha_{t+1} (1 - \gamma_t)\eta D\sum_{i=0}^{t-1}\theta_{i}\|\nabla f(z_{i}) - v_{i}\| \nonumber \\
& \le f(z_t) +  2L\eta^2D^2 - \eta \mathcal{G}(z_{t}) + 2\eta D\|\nabla f(z_t) - v_t\| +  2L\eta^2 D^2 + \alpha_{t+1}\eta D\sum_{i=0}^{t-1}\theta_{i}\|\nabla f(z_{i}) - v_{i}\| \nonumber \\
& =  f(z_t) +  4L\eta^2D^2 - \eta \mathcal{G}(z_{t}) + 2\eta D\|\nabla f(z_t) - v_t\| + \alpha_{t+1}\eta D\sum_{i=0}^{t-1}\theta_{i}\|\nabla f(z_{i}) - v_{i}\|,
\end{align}
where the third inequality holds by the results in Lemma \ref{lem:A1}, and $\gamma_t\in (0,1)$, $\alpha_{t+1}=\frac{1}{t+2}$ and $\alpha_{t+1}(\gamma_t-\eta_t)=\frac{1}{(t+2)^2(t+1)}\eta \leq \eta$.
Summing the inequality \eqref{eq:H5} from $t=0$ to $T-1$, we can obtain
\begin{align}
 \eta \sum_{t=1}^{T-1}\mathcal{G}(z_{t}) & \leq f(z_0) - f(z_{T-1}) +  4TL\eta^2D^2 + 2\eta D \sum_{t=0}^{T-1}\|\nabla f(z_t) - v_t\|
 + \eta D \sum_{t=0}^{T-1}\alpha_{t+1}\sum_{i=0}^{t-1}\theta_{i}\|\nabla f(z_{i}) - v_{i}\| \nonumber \\
 & = f(z_0) - f(z_{T-1}) +  4TL\eta^2D^2 + 2\eta D \sum_{t=0}^{T-1}\|\nabla f(z_t) - v_t\| + \eta D \sum_{t=0}^{T-2}\theta_{t}\big(\sum_{i=t+1}^{T-1}\alpha_{i+1}\big)\|\nabla f(z_{t}) - v_{t}\| \nonumber \\
 & \leq f(z_0) - \inf_{z\in \mathcal{X}} f(z) +  4TL\eta^2D^2 + 2\eta D \sum_{t=0}^{T-1}\|\nabla f(z_t) - v_t\|
 + \eta D \sum_{t=0}^{T-2}\theta_{t}\big(\sum_{i=t+1}^{T-1}\alpha_{i+1}\big)\|\nabla f(z_{t}) - v_{t}\|\nonumber \\
 & \leq \triangle +  4TL\eta^2D^2 + 2\eta D \sum_{t=0}^{T-1}\|\nabla f(z_t) - v_t\| + \eta D \sum_{t=0}^{T-2}\theta_{t}\big(\sum_{i=t+1}^{T-1}\alpha_{i+1}\big)\|\nabla f(z_{t}) - v_{t}\|,
\end{align}
where the last inequality holds by the Assumption 4.
Since $v_t =\frac{1}{n}\sum_{i=1}^n \hat{\nabla}_{coo} f_i(z_t) = \hat{\nabla}_{coo} f(z_t)$, we have
\begin{align}
 \frac{1}{T}\sum_{t=1}^{T-1}\mathcal{G}(z_{t}) &\leq \frac{\triangle}{\eta T} +  4L\eta D^2 + \frac{2D}{T} \sum_{t=0}^{T-1}\|\nabla f(z_t) - v_t\| +  \frac{D}{T} \sum_{t=0}^{T-2}\theta_{t}\big(\sum_{i=t+1}^{T-1}\alpha_{i+1}\big)\|\nabla f(z_{t}) - v_{t}\| \nonumber \\
 & \leq \frac{\triangle}{\eta T} +  4L\eta D^2 + 2DL\sqrt{d}\mu +  \frac{D}{T} \sum_{t=0}^{T-2}\frac{1}{(t+1)(t+2)}\big(\sum_{i=t+1}^{T-1}\frac{1}{i+1}\big) L\sqrt{d}\mu \nonumber \\
 & \leq \frac{\triangle}{\eta T} +  4L\eta D^2 + 2DL\sqrt{d}\mu +  \frac{D}{T} \sum_{t=0}^{T-1}\frac{1}{(t+1)(t+2)}\ln(\frac{T}{t+1}) L\sqrt{d}\mu \nonumber \\
 & \leq \frac{\triangle}{\eta T} +  4L\eta D^2 + 2DL\sqrt{d}\mu  +  \frac{D\ln(T)}{T} L\sqrt{d}\mu,
\end{align}
where the second inequality holds by Lemma \ref{lem:A3}, and the third inequality follows by the inequality $\sum_{i=t+1}^{T-1}\frac{1}{i+1} \leq \int^{T}_{t+1}\frac{1}{x}dx \leq \ln(\frac{T}{t+1})$,
and the last inequality holds by $\sum_{t=0}^{T-1} \frac{1}{(t+1)(t+2)}\ln(\frac{T}{t+1}) \leq \ln(T)\sum_{t=0}^{T-1} \frac{1}{(t+1)(t+2)}=\ln(T)(1-\frac{1}{T+1})\leq \ln(T)$.
Let $\eta = T^{-\frac{1}{2}}$, $\mu=d^{-\frac{1}{2}}T^{-\frac{1}{2}}$,
then we have
\begin{align}
 \frac{1}{T}\sum_{t=1}^{T-1}\mathcal{G}(z_{t}) \leq O(\frac{1}{T^{\frac{1}{2}}}) + O(\frac{\ln(T)}{T^{\frac{3}{2}}}).
\end{align}

\end{proof}

\subsubsection{ Convergence Analysis of the Acc-SZOFW (CooGE) Algorithm }
In this subsection, we study the convergence properties of the Acc-SZOFW (CooGE) Algorithm, which uses the CooGE to estimate gradients.
We begin with giving an upper bound of variance of stochastic zeroth-order gradient $v_t$.
\begin{lemma} \label{lem:B1}
 Suppose the zeroth-order gradient $v_t $ be generated from Algorithm \ref{alg:1} by using the CooGE zeroth-order gradient.
 Let $\alpha_t = \frac{1}{t+1}$, $\theta_t = \frac{1}{(t+1)(t+2)}$, $\eta=\eta_t$ and
$\gamma_t = (1+\theta_t)\eta_t$ in Algorithm \ref{alg:1}.
 For the \textbf{finite-sum} setting, we have
 \begin{align}
  \mathbb{E} \|\nabla f(z_t)-v_t\|  \leq L\sqrt{d}\mu + \frac{L(\sqrt{6d}\mu + 2\sqrt{3D}\eta)}{\sqrt{b/q}}.
 \end{align}
 For the \textbf{stochastic} setting, we have
 \begin{align}
  \mathbb{E} \|\nabla f(z_t)-v_t\|  \leq L\sqrt{d}\mu + \frac{L(\sqrt{6d}\mu + 2\sqrt{3D}\eta)}{\sqrt{b_2/q}} + \frac{\sqrt{3}\sigma_1}{\sqrt{b_1}} + \sqrt{6d}L\mu.
 \end{align}
\end{lemma}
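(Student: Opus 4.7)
The strategy is a standard SPIDER/SpiderBoost variance-control argument, modified to account for the bias introduced by the coordinate-wise zeroth-order estimator. I will split the error into a CooGE bias term and a stochastic deviation term, and then telescope the latter within the current epoch $[n_t, t]$, where $n_t = \lfloor t/q \rfloor \cdot q$ denotes the most recent checkpoint.

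\textbf{Step 1: Bias/variance split.} I would write
\begin{equation}
\nabla f(z_t) - v_t = \bigl[\nabla f(z_t) - \hat{\nabla}_{coo} f(z_t)\bigr] + \bigl[\hat{\nabla}_{coo} f(z_t) - v_t\bigr],\nonumber
\end{equation}
where in the stochastic setting $\hat{\nabla}_{coo} f(z) := \mathbb{E}_\xi[\hat{\nabla}_{coo} f_\xi(z)]$. By Lemma \ref{lem:A3}, the first bracket is deterministically bounded in norm by $L\sqrt{d}\mu$.

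\textbf{Step 2: SPIDER telescoping of the second bracket.} Using the recursive step of Algorithm \ref{alg:1}, one checks inductively that
\begin{equation}
v_t - \hat{\nabla}_{coo} f(z_t) \;=\; \bigl[v_{n_t} - \hat{\nabla}_{coo} f(z_{n_t})\bigr] + \sum_{s=n_t+1}^{t}\epsilon_s,\nonumber
\end{equation}
where $\epsilon_s := \tfrac{1}{b_2}\sum_{j\in\mathcal{B}_s}\bigl[\hat{\nabla}_{coo} f_j(z_s) - \hat{\nabla}_{coo} f_j(z_{s-1})\bigr] - \bigl[\hat{\nabla}_{coo} f(z_s) - \hat{\nabla}_{coo} f(z_{s-1})\bigr]$ has zero conditional mean given the history up to $s-1$. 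In the finite-sum case with $b$ in place of $b_2$, the anchor term $v_{n_t} - \hat{\nabla}_{coo} f(z_{n_t})$ vanishes exactly; in the stochastic case it is an average of $b_1$ i.i.d.\ zero-mean random vectors.

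\textbf{Step 3: Bound the per-step variance.} By Lemma \ref{lem:A2} the cross terms vanish, so $\mathbb{E}\|\sum_{s}\epsilon_s\|^2 = \sum_{s}\mathbb{E}\|\epsilon_s\|^2$, and each $\mathbb{E}\|\epsilon_s\|^2$ is bounded by $\tfrac{1}{b_2}\mathbb{E}\|\hat{\nabla}_{coo} f_j(z_s) - \hat{\nabla}_{coo} f_j(z_{s-1})\|^2$. I would control the latter by inserting $\pm\nabla f_j(z_s)$ and $\pm\nabla f_j(z_{s-1})$, applying $\|a+b+c\|^2 \le 3(\|a\|^2+\|b\|^2+\|c\|^2)$, then using Lemma \ref{lem:A3} on the two CooGE-bias terms and $L$-smoothness of $f_j$ together with Lemma \ref{lem:A1} (which gives $\|z_s - z_{s-1}\| \le 2\eta D$) on the remaining term. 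This yields the per-step bound $6L^2 d\mu^2 + 12 L^2 D^2 \eta^2$. Summing over at most $q$ epoch steps and taking square roots, I get $\mathbb{E}\|\sum_s\epsilon_s\| \le \sqrt{q/b_2}\,(L\sqrt{6d}\,\mu + 2\sqrt{3}\,LD\,\eta)$.

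\textbf{Step 4: Anchor term in the stochastic case.} For $\mathbb{E}\|v_{n_t} - \hat{\nabla}_{coo} f(z_{n_t})\|^2$, I would again apply Lemma \ref{lem:A2} on the $b_1$ independent zero-mean summands $\hat{\nabla}_{coo} f_i(z_{n_t}) - \hat{\nabla}_{coo} f(z_{n_t})$. Splitting each summand through $\pm\nabla f_i(z_{n_t})$ and $\pm\nabla f(z_{n_t})$, using Lemma \ref{lem:A3} on the bias parts and Assumption~2 on the stochastic-gradient variance, the per-sample variance is at most $6L^2 d\mu^2 + 3\sigma_1^2$, giving an $\mathbb{E}\|\cdot\|$-bound of $(\sqrt{6d}\,L\mu + \sqrt{3}\,\sigma_1)/\sqrt{b_1}$. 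Finally I apply triangle inequality to recombine Steps 1--4 and use $1/\sqrt{b_1}\le 1$ on the $\sqrt{6d}\,L\mu$ piece to obtain the stated form.

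The only non-routine point is Step 3: one must be careful that the independence/martingale structure is with respect to the \emph{mini-batch randomness conditional on the past iterates}, since $z_s$ and $z_{s-1}$ are random but measurable with respect to the prior sigma-algebra, so $\epsilon_s$ is a sum of conditionally zero-mean terms; this justifies applying Lemma \ref{lem:A2} summand by summand. Everything else is Cauchy--Schwarz, Jensen ($\mathbb{E}\|X\|\le \sqrt{\mathbb{E}\|X\|^2}$), and bookkeeping.
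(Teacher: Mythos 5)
Your proposal is correct and follows essentially the same route as the paper's proof: the same bias/variance split through $\hat{\nabla}_{coo}f(z_t)$, the same per-epoch SPIDER recursion with Lemma \ref{lem:A2} applied to the conditionally zero-mean batch increments, the same anchor bound at the checkpoint, and the same Jensen/subadditivity finish. The only (cosmetic) divergence is that your per-step bound yields the dimensionally consistent $2\sqrt{3}LD\eta$ where the paper's derivation uses $\|z_s-z_{s-1}\|^2\le 4\eta^2 D$ and hence states $2L\sqrt{3D}\eta$ --- your constant is the correct one, and this does not affect the order of the result.
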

\begin{proof}
 Without loss of generality, we first give an upper bound of $\mathbb{E}\|v_t - \hat{\nabla} f_{coo}(z_{t})\|^2$ in the stochastic setting. By the definition of $v_t$,
 we have
 \begin{align} \label{eq:B1}
  &\mathbb{E}\| \hat{\nabla} f_{coo}(z_{t})-v_t\|^2= \mathbb{E}\|\underbrace{\hat{\nabla} f_{coo}(z_{t}) -\hat{\nabla} f_{coo}(z_{t-1}) -\frac{1}{b_2} \sum_{j \in \mathcal{B}_2} [\hat{\nabla}_{coo} f_{j}(z_t) - \hat{\nabla}_{coo} f_{j}(z_{t-1})] }_{=T_1} + \underbrace{\hat{\nabla} f_{coo}(z_{t-1})- v_{t-1}}_{=T_2}\|^2 \nonumber \\
  & = \mathbb{E}\|\hat{\nabla} f_{coo}(z_{t}) -\hat{\nabla} f_{coo}(z_{t-1}) -\frac{1}{b_2} \sum_{j \in \mathcal{B}_2} [\hat{\nabla}_{coo} f_{j}(z_t) - \hat{\nabla}_{coo} f_{j}(z_{t-1})] \|^2 + \mathbb{E}\| \hat{\nabla} f_{coo}(z_{t-1})- v_{t-1}\|^2 \nonumber \\
  & = \frac{1}{b_2^2} \sum_{j \in \mathcal{B}_2}\mathbb{E}\| \hat{\nabla} f_{coo}(z_{t}) -\hat{\nabla} f_{coo}(z_{t-1}) - \hat{\nabla}_{coo} f_{j}(z_t) + \hat{\nabla}_{coo} f_{j}(z_{t-1}) \|^2 + \mathbb{E}\| \hat{\nabla} f_{coo}(z_{t-1})- v_{t-1}\|^2 \nonumber \\
  & \leq \frac{1}{b^2_2} \sum_{j \in \mathcal{B}_2} \mathbb{E}\|\hat{\nabla}_{coo} f_{j}(z_t) - \hat{\nabla}_{coo} f_{j}(z_{t-1}) \|^2 + \mathbb{E}\| \hat{\nabla} f_{coo}(z_{t-1})- v_{t-1}\|^2 \nonumber \\
  & \leq \frac{1}{b_2^2} \sum_{j \in \mathcal{B}_2}\mathbb{E}\| \hat{\nabla}_{coo} f_{j}(z_t) \!-\! \nabla f_{j}(z_t) \!+\! \nabla f_{j}(z_t) \!-\! \nabla f_{j}(z_{t-1}) \!+\! \nabla f_{j}(z_{t-1})\!-\! \hat{\nabla}_{coo} f_{j}(z_{t-1}) \|^2 \!+\! \mathbb{E}\| \hat{\nabla} f_{coo}(z_{t-1})- v_{t-1}\|^2 \nonumber \\
  & \leq \frac{1}{b_2} \big( 3L^2\|z_{t}-z_{t-1}\|^2 + 6L^2d\mu^2 \big) +   \mathbb{E}\| \hat{\nabla} f_{coo}(z_{t-1})- v_{t-1}\|^2 \nonumber \\
  & \leq \frac{6L^2d\mu^2 }{b_2} + \frac{12L^2\eta^2D}{b_2} +   \mathbb{E}\| \hat{\nabla} f_{coo}(z_{t-1})- v_{t-1}\|^2,
 \end{align}
 where the second equality holds by $\mathbb{E}[T_1]=0$ and $T_2$ is independent to $\mathcal{B}_2$, and the third equality holds by Lemma \ref{lem:A2},
 and the third inequality holds by Cauchy-Schwarz inequality and Lemma \ref{lem:A3}, and the last inequality follows by Lemma \ref{lem:A1}.

 Let $n_t = \lfloor t/q\rfloor $ such that $n_tq \leq t \leq (n_t+1)q-1$.
 When $t=n_tq$, $v_t = \frac{1}{b_1}\sum_{j\in\mathcal{B}_1}\hat{\nabla}_{coo} f_{j}(z_{t})$,
 so we have
 \begin{align}
 &\mathbb{E}\| \hat{\nabla} f_{coo}(z_{n_tq})-  \frac{1}{b_1}\sum_{j\in\mathcal{B}_1}\hat{\nabla}_{coo} f_{j}(z_{n_tq})\|^2 \nonumber \\
 & = \mathbb{E}\| \hat{\nabla} f_{coo}(z_{n_tq}) - \nabla f(z_{n_tq}) + \nabla f(z_{n_tq}) -\frac{1}{b_1}\sum_{j\in\mathcal{B}_1}\nabla f_{j}(z_{n_tq})
 + \frac{1}{b_1}\sum_{j\in\mathcal{B}_1}\nabla f_{j}(z_{n_tq})-  \frac{1}{b_1}\sum_{j\in\mathcal{B}_1}\hat{\nabla}_{coo} f_{j}(z_{n_tq})\|^2 \nonumber \\
 & \leq 3\mathbb{E}\| \nabla f(z_{n_tq}) -\frac{1}{b_1}\sum_{j\in\mathcal{B}_1}\nabla f_{j}(z_{n_tq})\|^2 + 3\mathbb{E}\| \hat{\nabla} f_{coo}(z_{n_tq}) - \nabla f(z_{n_tq})\|^2 + 3\mathbb{E}\|\frac{1}{b_1}\sum_{j\in\mathcal{B}_1}\nabla f_{j}(z_{n_tq})-  \frac{1}{b_1}\sum_{j\in\mathcal{B}_1}\hat{\nabla}_{coo} f_{j}(z_{n_tq})\|^2 \nonumber \\
 & \leq \frac{3\sigma^2_1}{b_1} + 3\mathbb{E}\| \hat{\nabla} f_{coo}(z_{n_tq}) - \nabla f(z_{n_tq})\|^2 + 3\mathbb{E}\|\frac{1}{b_1}\sum_{j\in\mathcal{B}_1}\nabla f_{j}(z_{n_tq})-  \frac{1}{b_1}\sum_{j\in\mathcal{B}_1}\hat{\nabla}_{coo} f_{j}(z_{n_tq})\|^2 \nonumber \\
 & \leq \frac{3\sigma^2_1}{b_1} + 3L^2d\mu^2 + 3L^2d\mu^2 = \frac{3\sigma^2_1}{b_1} + 6L^2d\mu^2,
 \end{align}
 where the first inequality holds by the Young's inequality; the second inequality holds by Lemma \ref{lem:A2};
 the third inequality follows by Lemma \ref{lem:A3}.
 By recursion to \eqref{eq:B1}, we have
 \begin{align}
 \mathbb{E}\| \hat{\nabla} f_{coo}(z_{t})-v_t\|^2 & \leq (t-n_t q )\frac{6L^2(d\mu^2 + 2\eta^2D)}{b_2} + \mathbb{E}\| \hat{\nabla} f_{coo}(z_{n_tq})- v_{n_tq}\|^2 \nonumber \\
 & \leq \frac{6qL^2(d\mu^2 + 2\eta^2D)}{b_2} + \frac{3\sigma^2_1}{b_1} + 6L^2d\mu^2.
 \end{align}
  By Jensen's inequality and the inequality $(a+b+c+d)^{1/2} \leq a^{1/2} + b^{1/2} + c^{1/2} + d^{1/2}$ with $a,b,c,d\geq0$, we have
 \begin{align}
  \mathbb{E} \|\hat{\nabla} f_{coo}(z_{t}) - v_t\| \leq \sqrt{\mathbb{E} \|\hat{\nabla} f_{coo}(z_{t}) - v_t\|^2} \leq \frac{L(\sqrt{6d}\mu + 2\sqrt{3D}\eta)}{\sqrt{b_2/q}} +  \frac{\sqrt{3}\sigma_1}{\sqrt{b_1}} + \sqrt{6d}L\mu.
 \end{align}
 Thus we have
 \begin{align}
  \mathbb{E} \|\nabla f(z_t)-v_t\| &=\mathbb{E} \|\nabla f(z_t)- \hat{\nabla}_{coo} f(z_t) + \hat{\nabla}_{coo} f(z_t)- v_t\| \nonumber\\
  & \leq \mathbb{E} \|\nabla f(z_t)- \hat{\nabla}_{coo} f(z_t)\| + \mathbb{E} \|\hat{\nabla}_{coo} f(z_t)- v_t\| \nonumber\\
  & \leq L\sqrt{d}\mu + \frac{L(\sqrt{6d}\mu + 2\sqrt{3D}\eta)}{\sqrt{b_2/q}} +  \frac{\sqrt{3}\sigma_1}{\sqrt{b_1}} + \sqrt{6d}L\mu,
 \end{align}
 where the last inequality holds by the above Lemma \ref{lem:A3}.

 For the finite-sum setting, when $t=n_tq$, $v_t = \hat{\nabla} f_{coo}(z_{t})$, so we have $\mathbb{E}\| \hat{\nabla} f_{coo}(z_{n_tq})- v_{n_tq}\|^2=0$.
 Following the above result, we have
  \begin{align}
  \mathbb{E} \|\nabla f(z_t)-v_t\| \leq L\sqrt{d}\mu + \frac{L(\sqrt{6d}\mu + 2\sqrt{3D}\eta)}{\sqrt{b/q}}.
 \end{align}
\end{proof}

Next, based on the above lemma,
we will give the convergence properties of the Acc-SZOFW (CooGE) algorithm.
\begin{theorem} \label{th:C1}
Suppose $\{x_t,y_t,z_t\}^{T-1}_{t=0}$ be generated from Algorithm \ref{alg:1} by using the CooGE zeroth-order gradient, and let $\alpha_t = \frac{1}{t+1}$, $\theta_t = \frac{1}{(t+1)(t+2)}$,
$\gamma_t = (1+\theta_t)\eta_t$, $\eta = \eta_t= T^{-\frac{1}{2}}$, $\mu=d^{-\frac{1}{2}}T^{-\frac{1}{2}}$, $b_2 = q$ or $b=q$, and $b_1 = T$,
then we have
\begin{align}
\frac{1}{T}\sum_{t=1}^{T-1}\mathcal{G}(z_{t}) \leq O(\frac{1}{T^{\frac{1}{2}}}) + O(\frac{\ln(T)}{T^{\frac{3}{2}}}).
\end{align}
\end{theorem}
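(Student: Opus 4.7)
The plan is to mirror the structure of the deterministic Acc-ZO-FW analysis (Theorem~1 in the excerpt), but with expectations taken throughout and with the new variance bound from Lemma~\ref{lem:B1} replacing the deterministic CooGE bias bound. First, I would invoke $L$-smoothness of $f$ together with the identity $z_{t+1}=(1-\alpha_{t+1})y_{t+1}+\alpha_{t+1}x_{t+1}$, $y_{t+1}=z_t+\eta_t(w_t-z_t)$, $x_{t+1}=x_t+\gamma_t(w_t-x_t)$ to obtain
\begin{align*}
f(z_{t+1}) &\le f(z_t) + \bigl((1-\alpha_{t+1})\eta_t+\alpha_{t+1}\gamma_t\bigr)\langle \nabla f(z_t), w_t-z_t\rangle \\
&\quad + \alpha_{t+1}(1-\gamma_t)\langle \nabla f(z_t), x_t-z_t\rangle + \tfrac{L}{2}\|z_{t+1}-z_t\|^2,
\end{align*}
and then use Lemma~\ref{lem:A1} to control $\|z_{t+1}-z_t\|^2\le 4\eta^2 D^2$.

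Next, I would bound $\langle \nabla f(z_t), w_t - z_t\rangle$ exactly as in the baseline proof: inserting $\pm v_t$, using that $w_t$ is the linear minimizer of $v_t$ over $\mathcal{X}$, and invoking Cauchy--Schwarz plus the definition of the Frank-Wolfe gap to get $\langle \nabla f(z_t), w_t-z_t\rangle \le D\|\nabla f(z_t)-v_t\| - \mathcal{G}(z_t)$. For $\langle \nabla f(z_t), x_t - z_t\rangle$ I would unroll the recursion (as in \eqref{eq:H3}--\eqref{eq:H4}) using the identity \eqref{eq:A1}, $L$-smoothness, and Lemma~\ref{lem:A1}, producing a telescoping remainder and an accumulated term $\eta D\sum_{i=0}^{t-1}\theta_i\|\nabla f(z_i)-v_i\|$.

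Combining these bounds and summing from $t=0$ to $T-1$, using $\alpha_{t+1}(\gamma_t-\eta_t)\le \eta$ and Assumption~4, yields
\begin{align*}
\eta \sum_{t=1}^{T-1}\mathbb{E}[\mathcal{G}(z_t)] \le \triangle + 4TL\eta^2D^2 + 2\eta D\sum_{t=0}^{T-1}\mathbb{E}\|\nabla f(z_t)-v_t\| + \eta D\sum_{t=0}^{T-2}\theta_t\Bigl(\sum_{i=t+1}^{T-1}\alpha_{i+1}\Bigr)\mathbb{E}\|\nabla f(z_t)-v_t\|.
\end{align*}
Dividing through by $\eta T$ and applying Lemma~\ref{lem:B1} gives a uniform bound $\mathbb{E}\|\nabla f(z_t)-v_t\|\le L\sqrt{d}\mu + L(\sqrt{6d}\mu + 2\sqrt{3D}\eta)/\sqrt{b/q}$ (finite-sum) or the analogous expression with $b_2,b_1$ (stochastic). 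The double-sum term contributes an extra $\ln(T)$ factor via $\sum_{i=t+1}^{T-1}\frac{1}{i+1}\le \ln(T/(t+1))$ and $\sum_t \theta_t\ln(T/(t+1))\le \ln(T)$, exactly as in the baseline derivation.

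The main obstacle, and the step that justifies all the parameter tuning, is ensuring that each of the four error contributions ($\triangle/(\eta T)$, $L\eta D^2$, the CooGE bias $L\sqrt{d}\mu$, the variance term $L(\sqrt{d}\mu+\sqrt{D}\eta)/\sqrt{b_2/q}$, and the initial-batch variance $\sigma_1/\sqrt{b_1}$) is simultaneously of order $T^{-1/2}$. Plugging in $\eta=T^{-1/2}$, $\mu=d^{-1/2}T^{-1/2}$, $b_2=q$ (or $b=q$), and $b_1=T$ balances these: each of the first four terms becomes $O(T^{-1/2})$ and the log-weighted double sum becomes $O(\ln(T)\,T^{-3/2})$, giving the claimed rate. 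The remaining book-keeping is routine but must be done carefully to verify that $b_2=q$ makes the $1/\sqrt{b_2/q}$ factor a constant while the initial-batch term $\sqrt{3}\sigma_1/\sqrt{b_1}=O(T^{-1/2})$, thereby absorbing all terms into the stated bound.
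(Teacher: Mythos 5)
Your proposal follows essentially the same route as the paper's proof: the $L$-smoothness descent step, the Frank--Wolfe-gap bound $\langle \nabla f(z_t), w_t-z_t\rangle \le D\|\nabla f(z_t)-v_t\|-\mathcal{G}(z_t)$, the unrolled recursion for $\langle \nabla f(z_t), x_t-z_t\rangle$, the summation with the $\ln(T)$ double-sum estimate, and finally Lemma~\ref{lem:B1} with the stated parameter choices to balance all error terms at $O(T^{-1/2})$. The argument is correct and matches the paper's proof in both structure and detail.
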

\begin{proof}
By using the Assumption 1, \emph{i.e.,} $f(x)$ is $L$-smooth, we have
\begin{align} \label{eq:C1}
 f(z_{t+1}) &\le f(z_t) + \langle \nabla f(z_t), z_{t+1} - z_k \rangle + \frac{L}{2} \|z_{t+1} - z_t \|^2 \nonumber \\
&= f(z_t) + (1 -\alpha_{t+1}) \langle \nabla f(z_t), y_{t+1} - z_t\rangle  + \alpha_{t+1} \langle \nabla f(z_t), x_{t+1} - z_t \rangle + \frac{L}{2} \|z_{t+1} - z_t \|^2 \nonumber \\
&= f(z_t) + ((1 -\alpha_{t+1}) \eta_t + \alpha_{t+1} \gamma_t) \langle \nabla f(z_t),  w_t - z_t \rangle + \alpha_{t+1} (1 - \gamma_t) \langle \nabla f(z_t), x_t - z_t \rangle \nonumber \\
&  \quad + \frac{L}{2} \|z_{t+1} - z_t\|^2,
\end{align}
where the first equality holds by $z_{t+1} = (1 - \alpha_{t+1}) y_{t+1} + \alpha_{t+1} x_{t+1}$, and the last equality holds by
$x_{t+1}=x_t+\gamma_t(w_t-x_t)$ and $y_{t+1}=z_t+\eta_t(w_t-z_t)$.

Let $\hat{w}_t = \arg\max_{w\in \mathcal{X}} \langle w, -\nabla f(z_t)\rangle = \arg\min_{w\in \mathcal{X}} \langle w, \nabla f(z_t)\rangle$, we have
\begin{align} \label{eq:C2}
 \langle \nabla f(z_t), w_t - z_t\rangle & = \langle \nabla f(z_t) - v_t, w_t - z_t\rangle + \langle v_t, w_t - z_t\rangle \nonumber \\
 & \leq \langle  \nabla f(z_t) - v_t, w_t - z_t\rangle + \langle v_t, \hat{w}_t - z_t\rangle \nonumber \\
 & = \langle  \nabla f(z_t) - v_t, w_t - \hat{w}_t\rangle + \langle \nabla f(z_t), \hat{w}_t - z_t\rangle \nonumber \\
 & = \langle  \nabla f(z_t) - v_t, w_t - \hat{w}_t\rangle - \mathcal{G}(z_t) \nonumber \\
 & \leq D\|\nabla f(z_t) - v_t\| - \mathcal{G}(z_t),
\end{align}
where the first inequality holds by the step 11 of Algorithm \ref{alg:1}, and the third equality holds by the definition of Frank-Wolfe gap $\mathcal{G}(z_t)$,
and the second inequality follows by Cauchy-Schwarz inequality and Assumption 3.

Next, we consider the upper bound of $\langle \nabla f(z_t), x_t - z_t \rangle$. We have
\begin{align} \label{eq:C3}
& \langle \nabla f(z_t), x_t - z_t \rangle =  \langle \nabla f(z_t)-\nabla f(z_{t-1}), x_t - z_t \rangle + \langle \nabla f(z_{t-1}), x_t - z_t \rangle \nonumber \\
 & \leq \|\nabla f(z_t) - \nabla f(z_{t-1})\| \|x_t - z_t\| + \langle \nabla f(z_{t-1}), x_t - z_t \rangle \nonumber \\
 & \leq L\|z_t-z_{t-1}\| \|x_t - z_t\| + \langle \nabla f(z_{t-1}), x_t - z_t \rangle \nonumber \\
 & \leq 2 L\eta^2 D^2   + \langle \nabla f(z_{t-1}), x_t - z_t \rangle  \nonumber \\
 & = 2 L\eta^2 D^2 + (1 - \alpha_{t})(1-\gamma_{t-1})\langle \nabla f(z_{t-1}),z_{t-1}-x_{t-1}\rangle + (1 - \alpha_{t})(\eta_{t-1}-\gamma_{t-1})\langle \nabla f(z_{t-1}), w_{t-1}-z_{t-1} \rangle \nonumber \\
 & \leq 2 L\eta^2 D^2 + (1 - \alpha_{t})(1-\gamma_{t-1})\langle \nabla f(z_{t-1}),z_{t-1}-x_{t-1}\rangle + (1 - \alpha_{t})\theta_{t-1}\eta_{t-1}\big( D\|\nabla f(z_{t-1}) - v_{t-1}\|
 - \mathcal{G}(z_{t-1}) \big)\nonumber \\
  & \leq 2 L\eta^2 D^2  + \langle \nabla f(z_{t-1}),z_{t-1}-x_{t-1}\rangle + \theta_{t-1}\eta_{t-1}D\|\nabla f(z_{t-1}) - v_{t-1}\|,
\end{align}
where the first inequality holds by Cauchy-Schwarz inequality, and the third inequality holds by Lemma \ref{lem:A1},
and the forth inequality holds by the inequality \eqref{eq:C2}, and the last inequality follows by $\alpha_{t} \in [0,1]$, $\gamma_t\in (0,1)$ and $\mathcal{G}(z_{t-1}) \geq 0$.
By recursion to \eqref{eq:C3}, we can obtain
\begin{align} \label{eq:C4}
 \langle \nabla f(z_t), x_t - z_t \rangle \leq 2 t L\eta^2 D^2 + \eta D\sum_{i=0}^{t-1}\theta_{i}\|\nabla f(z_{i}) - v_{i}\|
\end{align}

By using the above inequalities \eqref{eq:C1}, \eqref{eq:C2} and \eqref{eq:C4} and the results in Lemma \ref{lem:A1}, we have
\begin{align} \label{eq:C5}
 f(z_{t+1}) &\le f(z_t) + ((1 -\alpha_{t+1}) \eta_t + \alpha_{t+1} \gamma_t) \langle \nabla f(z_t),  w_t - z_t \rangle + \alpha_{t+1} (1 - \gamma_t) \langle \nabla f(z_t), x_t - z_t \rangle \nonumber \\
&  \quad + \frac{L}{2} \|z_{t+1} - z_t\|^2 \nonumber \\
& \le f(z_t) +  2L\eta^2D^2 - ((1 -\alpha_{t+1}) \eta_t + \alpha_{t+1} \gamma_t)\mathcal{G}(z_{t}) + ((1 -\alpha_{t+1}) \eta_t + \alpha_{t+1} \gamma_t)D\|\nabla f(z_t) - v_t\| \nonumber \\
& \quad +  2t\alpha_{t+1} (1 - \gamma_t) L\eta^2 D^2 + \alpha_{t+1} (1 - \gamma_t)\eta D\sum_{i=0}^{t-1}\theta_{i}\|\nabla f(z_{i}) - v_{i}\| \nonumber \\
& \le f(z_t) +  2L\eta^2D^2 - \eta \mathcal{G}(z_{t}) + 2\eta D\|\nabla f(z_t) - v_t\| +  2L\eta^2 D^2 + \alpha_{t+1}\eta D\sum_{i=0}^{t-1}\theta_{i}\|\nabla f(z_{i}) - v_{i}\| \nonumber \\
& =  f(z_t) +  4L\eta^2D^2 - \eta \mathcal{G}(z_{t}) + 2\eta D\|\nabla f(z_t) - v_t\| + \alpha_{t+1}\eta D\sum_{i=0}^{t-1}\theta_{i}\|\nabla f(z_{i}) - v_{i}\|.
\end{align}
Summing the inequality \eqref{eq:C5} from $t=0$ to $T-1$, we can obtain
\begin{align}
 \eta \sum_{t=1}^{T-1}\mathcal{G}(z_{t}) & \leq f(z_0) - f(z_{T-1}) +  4TL\eta^2D^2 + 2\eta D \sum_{t=0}^{T-1}\|\nabla f(z_t) - v_t\|
 + \eta D \sum_{t=0}^{T-1}\alpha_{t+1}\sum_{i=0}^{t-1}\theta_{i}\|\nabla f(z_{i}) - v_{i}\| \nonumber \\
 & = f(z_0) - f(z_{T-1}) +  4TL\eta^2D^2 + 2\eta D \sum_{t=0}^{T-1}\|\nabla f(z_t) - v_t\| + \eta D \sum_{t=0}^{T-2}\theta_{t}\big(\sum_{i=t+1}^{T-1}\alpha_{i+1}\big)\|\nabla f(z_{t}) - v_{t}\| \nonumber \\
 & \leq f(z_0) - \inf_{z\in \mathcal{X}} f(z) +  4TL\eta^2D^2 + 2\eta D \sum_{t=0}^{T-1}\|\nabla f(z_t) - v_t\|
 + \eta D \sum_{t=0}^{T-2}\theta_{t}\big(\sum_{i=t+1}^{T-1}\alpha_{i+1}\big)\|\nabla f(z_{t}) - v_{t}\|\nonumber \\
 & \leq \triangle +  4TL\eta^2D^2 + 2\eta D \sum_{t=0}^{T-1}\|\nabla f(z_t) - v_t\| + \eta D \sum_{t=0}^{T-2}\theta_{t}\big(\sum_{i=t+1}^{T-1}\alpha_{i+1}\big)\|\nabla f(z_{t}) - v_{t}\|,
\end{align}
where the last inequality holds by the Assumption 4.
Without loss of generality, we first consider the stochastic setting. Then we have
\begin{align}
 \frac{1}{T}\sum_{t=1}^{T-1}\mathcal{G}(z_{t}) &\leq \frac{\triangle}{\eta T} +  4L\eta D^2 + \frac{2D}{T} \sum_{t=0}^{T-1}\|\nabla f(z_t) - v_t\| +  \frac{D}{T} \sum_{t=0}^{T-2}\theta_{t}\big(\sum_{i=t+1}^{T-1}\alpha_{i+1}\big)\|\nabla f(z_{t}) - v_{t}\| \nonumber \\
 & \leq \frac{\triangle}{\eta T} +  4L\eta D^2 + 2DL\sqrt{d}\mu + \frac{2DL(\sqrt{6d}\mu + 2\sqrt{3D}\eta)}{\sqrt{b_2/q}}  + \frac{2\sqrt{3}D\sigma_1}{\sqrt{b_1}} + 2\sqrt{6d}D L\mu \nonumber \\
 & \quad +  \frac{D}{T} \sum_{t=0}^{T-2}\frac{1}{(t+1)(t+2)}\big(\sum_{i=t+1}^{T-1}\frac{1}{i+1}\big)\big( L\sqrt{d}\mu + \frac{L(\sqrt{6d}\mu + 2\sqrt{3D}\eta)}{\sqrt{b_2/q}} + \frac{\sqrt{3}\sigma_1}{\sqrt{b_1}} + \sqrt{6d}L\mu\big) \nonumber \\
 & \leq \frac{\triangle}{\eta T} +  4L\eta D^2 + 2DL\sqrt{d}\mu + \frac{2DL(\sqrt{6d}\mu + 2\sqrt{3D}\eta)}{\sqrt{b_2/q}} +\frac{2\sqrt{3}D\sigma_1}{\sqrt{b_1}} + 2\sqrt{6d}DL\mu \nonumber \\
 & \quad +  \frac{D}{T} \sum_{t=0}^{T-1}\frac{1}{(t+1)(t+2)}\ln(\frac{T}{t+1})\big( L\sqrt{d}\mu + \frac{L(\sqrt{6d}\mu + 2\sqrt{3D}\eta)}{\sqrt{b_2/q}} + \frac{\sqrt{3}\sigma_1}{\sqrt{b_1}} + \sqrt{6d}L\mu \big)\nonumber \\
 & \leq \frac{\triangle}{\eta T} +  4L\eta D^2 + 2DL\sqrt{d}\mu + \frac{2DL(\sqrt{6d}\mu + 2\sqrt{3D}\eta)}{\sqrt{b_2/q}} + \frac{2\sqrt{3}D\sigma_1}{\sqrt{b_1}} + 2\sqrt{6d}DL\mu\nonumber \\
 & \quad   +  \frac{D\ln(T)}{T} \big( L\sqrt{d}\mu + \frac{L(\sqrt{6d}\mu + 2\sqrt{3D}\eta)}{\sqrt{b_2/q}} + \frac{\sqrt{3}\sigma_1}{\sqrt{b_1}} + \sqrt{6d}L\mu \big),
\end{align}
where the second inequality holds by Lemma \ref{lem:B1}, and the third inequality follows by the inequality $\sum_{i=t+1}^{T-1}\frac{1}{i+1} \leq \int^{T}_{t+1}\frac{1}{x}dx \leq \ln(\frac{T}{t+1})$,
and the last inequality holds by $\sum_{t=0}^{T-1} \frac{1}{(t+1)(t+2)}\ln(\frac{T}{t+1}) \leq \ln(T)\sum_{t=0}^{T-1} \frac{1}{(t+1)(t+2)}=\ln(T)(1-\frac{1}{T+1})\leq \ln(T)$.
Let $\eta = T^{-\frac{1}{2}}$, $\mu=d^{-\frac{1}{2}}T^{-\frac{1}{2}}$, $b_2 = q$ and $b_1 = T^{-1}$,
then we have
\begin{align}
 \frac{1}{T}\sum_{t=1}^{T-1}\mathcal{G}(z_{t}) \leq O(\frac{1}{T^{\frac{1}{2}}}) + O(\frac{\ln(T)}{T^{\frac{3}{2}}}).
\end{align}

In the finite-sum setting, let $\eta = T^{-\frac{1}{2}}$, $\mu=d^{-\frac{1}{2}}T^{-\frac{1}{2}}$ and $b_2 = q$.
Following the above the above result, we also have
\begin{align}
 \frac{1}{T}\sum_{t=1}^{T-1}\mathcal{G}(z_{t}) \leq O(\frac{1}{T^{\frac{1}{2}}}) + O(\frac{\ln(T)}{T^{\frac{3}{2}}}).
\end{align}

\end{proof}

\subsubsection{ Convergence Analysis of the Acc-SZOFW (UniGE) Algorithm }
In this subsection, we study the convergence properties of the Acc-SZOFW (UniGE) Algorithm.
We begin with giving an upper bound of variance of stochastic zeroth-order gradient $v_t$.

\begin{lemma} \label{lem:D1}
 Suppose the zeroth-order stochastic gradient $v_t$ be generated from Algorithm \ref{alg:1} by using the UniGE zeroth-order gradient.
 Let $\alpha_t = \frac{1}{t+1}$, $\theta_t = \frac{1}{(t+1)(t+2)}$, $\gamma_t = (1+\theta_t)\eta_t$ in Algorithm \ref{alg:1}.
 For the \textbf{stochastic} setting, we have
 \begin{align}
  \mathbb{E} \|\nabla f(z_t)-v_t\|  \leq \frac{\beta L d}{2} + \frac{L(\sqrt{3}d\beta + 2\sqrt{6Dd}\eta)}{\sqrt{2b_2/q}} +  \frac{\sigma_2}{\sqrt{b_1}}.
 \end{align}
\end{lemma}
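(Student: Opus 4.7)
My plan is to mimic closely the proof of Lemma \ref{lem:B1}, replacing the CooGE-specific tools (Lemma \ref{lem:A3}) with the UniGE-specific ones (Lemma \ref{lem:A4}), and separating the bias toward the smoothed surrogate $f_\beta$ from the variance of the stochastic recursion. Concretely, I would first split
\begin{align}
\mathbb{E}\|\nabla f(z_t) - v_t\| \le \mathbb{E}\|\nabla f(z_t) - \nabla f_\beta(z_t)\| + \mathbb{E}\|\nabla f_\beta(z_t) - v_t\|, \nonumber
\end{align}
and use Lemma \ref{lem:A4}(1) to bound the first (bias) term by $\beta Ld/2$. This yields the $\beta Ld/2$ piece appearing in the target inequality, so all the remaining work is on the variance $\mathbb{E}\|\nabla f_\beta(z_t) - v_t\|^2$ around the \emph{smoothed} gradient, for which $v_t$ is designed to be unbiased via Lemma \ref{lem:A4}(2).

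Next, for $t \neq n_t q$, I would use the defining SPIDER/SpiderBoost recursion and decompose
\begin{align}
\nabla f_\beta(z_t) - v_t = \underbrace{\nabla f_\beta(z_t) - \nabla f_\beta(z_{t-1}) - \tfrac{1}{b_2}\!\sum_{j\in\mathcal{B}_2}\!\bigl[\hat{\nabla}_{uni} f_j(z_t) - \hat{\nabla}_{uni} f_j(z_{t-1})\bigr]}_{T_1} + \underbrace{\nabla f_\beta(z_{t-1}) - v_{t-1}}_{T_2}, \nonumber
\end{align}
observe $\mathbb{E}[T_1 \mid \mathcal{F}_{t-1}] = 0$ by Lemma \ref{lem:A4}(2), and $T_2$ is $\mathcal{F}_{t-1}$-measurable, so cross terms vanish; this is exactly the setup of Lemma \ref{lem:A2}. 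Applying the same lemma a second time to the sum over $\mathcal{B}_2$ gives
\begin{align}
\mathbb{E}\|\nabla f_\beta(z_t) - v_t\|^2 \le \tfrac{1}{b_2}\mathbb{E}\|\hat{\nabla}_{uni} f_j(z_t) - \hat{\nabla}_{uni} f_j(z_{t-1})\|^2 + \mathbb{E}\|\nabla f_\beta(z_{t-1}) - v_{t-1}\|^2. \nonumber
\end{align}
Then Lemma \ref{lem:A4}(3) bounds the per-sample increment by $3dL^2\|z_t - z_{t-1}\|^2 + \tfrac{3}{2}L^2 d^2\beta^2$, and Lemma \ref{lem:A1} controls $\|z_t - z_{t-1}\| \le 2\eta D$, giving a per-step increase of at most $\tfrac{1}{b_2}\bigl(12 dL^2 \eta^2 D + \tfrac{3}{2}L^2 d^2\beta^2\bigr)$.

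For the checkpoint step $t = n_t q$, where $v_t = \tfrac{1}{b_1}\sum_{j\in\mathcal{B}_1}\hat{\nabla}_{uni} f_j(z_t)$, Lemma \ref{lem:A4}(2) makes $v_t$ an unbiased average for $\nabla f_\beta(z_t)$, so Assumption 2 gives $\mathbb{E}\|\nabla f_\beta(z_t) - v_t\|^2 \le \sigma_2^2/b_1$. Unrolling the one-step recursion for at most $q$ inner steps then yields
\begin{align}
\mathbb{E}\|\nabla f_\beta(z_t) - v_t\|^2 \le \tfrac{q}{b_2}\bigl(12 dL^2 \eta^2 D + \tfrac{3}{2}L^2 d^2\beta^2\bigr) + \tfrac{\sigma_2^2}{b_1}. \nonumber
\end{align}
Finally, Jensen's inequality together with the elementary bound $\sqrt{a+b+c} \le \sqrt{a}+\sqrt{b}+\sqrt{c}$ for nonnegative $a,b,c$ converts the variance into the stated $L_2$-style bound $\tfrac{L(\sqrt{3}d\beta + 2\sqrt{6Dd}\,\eta)}{\sqrt{2 b_2/q}} + \sigma_2/\sqrt{b_1}$, and adding back the bias $\beta Ld/2$ delivers the lemma.

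The main obstacle I expect is purely bookkeeping: tracking the constants so that the coefficient $\sqrt{3}$ on $d\beta$ and $2\sqrt{6Dd}$ on $\eta$ come out exactly right after combining Lemma \ref{lem:A4}(3) (whose $\tfrac{3}{2}L^2 d^2\beta^2$ constant produces the $\sqrt{3}d\beta$) with Lemma \ref{lem:A1}'s $2\eta D$ (which, squared and multiplied by $3dL^2$, produces $12 d L^2 \eta^2 D^2$, whose square root is $2\sqrt{3Dd}\cdot \eta \cdot \sqrt{D}$ and must be reconciled with the stated $2\sqrt{6Dd}$). Subject to fixing these constants by tightening or loosening a constant in the intermediate inequalities (or by using an extra factor of $2$ in the variance bound on $T_1+T_2$), the overall strategy is a direct transcription of the CooGE proof.
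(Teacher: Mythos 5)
Your proposal is correct and follows essentially the same route as the paper's own proof: the same bias/variance split around $\nabla f_\beta$, the same $T_1+T_2$ martingale decomposition handled via Lemma \ref{lem:A2}, the same use of Lemma \ref{lem:A4}(2)--(3), Lemma \ref{lem:A1}, and Assumption 2 at the checkpoint, followed by unrolling over at most $q$ inner steps and Jensen plus subadditivity of the square root. The constant tension you flag ($2\sqrt{3Dd}\,\eta\sqrt{D}$ versus the stated $2\sqrt{6Dd}\,\eta$) is not a flaw in your argument but traces to the paper itself writing $12dL^2\eta^2 D$ where squaring the bound $\|z_t-z_{t-1}\|\le 2\eta D$ actually yields $12dL^2\eta^2 D^2$.
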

\begin{proof}
First, we define $f_{\beta}(x)=\mathbb{E}_{u\sim U_B}[f(x+\beta u)]$ be a smooth approximation of $f(x)$,
where $U_B$ is the uniform distribution over the $d$-dimensional unit Euclidean ball $B$. By Lemma 5 in \cite{ji2019improved},
we have $\mathbb{E}_{(u,\xi)}[\hat{\nabla}_{uni} f_{\xi}(x)]=f_{\beta}(x)$.
We give an upper bound of $\mathbb{E}\|v_t - \nabla f_{\beta}(z_{t})\|^2$ in the stochastic setting. By the definition of $v_t$,
 we have
 \begin{align} \label{eq:D1}
  &\mathbb{E}\| \nabla f_{\beta}(z_{t})-v_t\|^2= \mathbb{E}\|\underbrace{\nabla f_{\beta}(z_{t}) -\nabla f_{\beta}(z_{t-1}) -\frac{1}{b_2} \sum_{j \in \mathcal{B}_2} [\hat{\nabla}_{uni} f_{j}(z_t) - \hat{\nabla}_{uni} f_{j}(z_{t-1})] }_{=T_1} + \underbrace{\nabla f_{\beta}(z_{t-1})- v_{t-1}}_{=T_2}\|^2 \nonumber \\
  & = \mathbb{E}\|\nabla f_{\beta}(z_{t}) -\nabla f_{\beta}(z_{t-1}) -\frac{1}{b_2} \sum_{j \in \mathcal{B}_2} [\hat{\nabla}_{uni} f_{j}(z_t) - \hat{\nabla}_{uni} f_{j}(z_{t-1})] \|^2
  + \mathbb{E}\| \nabla f_{\beta}(z_{t-1})- v_{t-1}\|^2 \nonumber \\
  & = \frac{1}{b_2^2} \sum_{j \in \mathcal{B}_2}\mathbb{E}\| \nabla f_{\beta}(z_{t}) -\nabla f_{\beta}(z_{t-1}) - \hat{\nabla}_{uni} f_{j}(z_t) + \hat{\nabla}_{uni} f_{j}(z_{t-1}) \|^2 + \mathbb{E}\| \nabla f_{\beta}(z_{t-1}) - v_{t-1}\|^2 \nonumber \\
  & \leq\frac{1}{b^2_2} \sum_{j \in \mathcal{B}_2}\mathbb{E}\| \hat{\nabla}_{uni} f_{j}(z_t) - \hat{\nabla}_{uni} f_{j}(z_{t-1}) \|^2 + \mathbb{E}\| \nabla f_{\beta}(z_{t-1}) - v_{t-1}\|^2 \nonumber \\
  & \leq \frac{1}{b_2} \big( 3dL^2\|z_{t}-z_{t-1}\|^2 + \frac{3L^2d^2\beta^2}{2} \big) +   \mathbb{E}\| \nabla f_{\beta}(z_{t-1})- v_{t-1}\|^2 \nonumber \\
  & \leq \frac{3L^2d^2\beta^2 }{2b_2} + \frac{12dL^2\eta^2D}{b_2} +   \mathbb{E}\| \nabla f_{\beta}(z_{t-1})- v_{t-1}\|^2,
 \end{align}
 where the second equality holds by $\mathbb{E}[T_1]=0$ and $T_2$ is independent to $\mathcal{B}_2$, and the second inequality holds by Lemma \ref{lem:A4}.

 Let $n_t = \lfloor t/q\rfloor $ such that $n_tq \leq t \leq (n_t+1)q-1$.
 When $t=n_tq$, $v_t = \frac{1}{b_1}\sum_{j\in\mathcal{B}_1}\hat{\nabla}_{uni} f_{j}(z_{t})$,
 so we have $\mathbb{E}\| \nabla f_{\beta}(z_{n_tq})- v_{n_tq}\|^2 \leq \frac{\sigma_2^2}{b_1}$.
 By recursion to \eqref{eq:D1}, we have
 \begin{align}
 \mathbb{E}\| \nabla f_{\beta}(z_{t})-v_t\|^2 & \leq (t-n_t q )\frac{3L^2(d^2\beta^2 + 8d\eta^2D)}{2b_2} + \mathbb{E}\| \nabla f_{\beta}(z_{n_tq})- v_{n_tq}\|^2 \nonumber \\
 & \leq \frac{3qL^2(d^2\beta^2 + 8d\eta^2D)}{2b_2} +  \frac{\sigma_2^2}{b_1}.
 \end{align}
  By Jensen's inequality and the inequality $(a+b+c)^{1/2} \leq a^{1/2} + b^{1/2} + c^{1/2}$ with $a,b,c\geq0$, we have
 \begin{align}
  \mathbb{E} \|\nabla f_{\beta}(z_{t}) - v_t\| \leq \sqrt{\mathbb{E} \|\nabla f_{\beta}(z_{t}) - v_t\|^2} \leq \frac{L(\sqrt{3}d\beta + 2\sqrt{6Dd}\eta)}{\sqrt{2b_2/q}} +  \frac{\sigma_2}{\sqrt{b_1}}.
 \end{align}
 Thus we have
 \begin{align}
  \mathbb{E} \|\nabla f(z_t)-v_t\| &=\mathbb{E} \|\nabla f(z_t)- \nabla f_{\beta}(z_{t}) + \nabla f_{\beta}(z_{t})- v_t\| \nonumber\\
  & \leq \mathbb{E} \|\nabla f(z_t)- \nabla f_{\beta}(z_{t})\| + \mathbb{E} \|\nabla f_{\beta}(z_{t}) - v_t\| \nonumber\\
  & \leq \frac{\beta L d}{2} + \frac{L(\sqrt{3}d\beta + 2\sqrt{6Dd}\eta)}{\sqrt{2b_2/q}} +  \frac{\sigma_2}{\sqrt{b_1}},
 \end{align}
 where the last inequality holds by the above Lemma \ref{lem:A4}.

\end{proof}

Next, based on the above lemma,
we will give the convergence properties of the Acc-SZOFW (UniGE) algorithm.
\begin{theorem}
Suppose $\{x_t,y_t,z_t\}^{T-1}_{t=0}$ be generated from Algorithm \ref{alg:1}, and let $\alpha_t = \frac{1}{t+1}$, $\theta_t = \frac{1}{(t+1)(t+2)}$,
$\gamma_t = (1+\theta_t)\eta_t$, $\eta = \eta_t= T^{-\frac{1}{2}}$, $\beta=d^{-1}T^{-\frac{1}{2}}$, $b_2 = q$, and $b_1 = T$,
then we have
\begin{align}
\frac{1}{T}\sum_{t=1}^{T-1}\mathcal{G}(z_{t}) \leq O(\frac{\sqrt{d}}{T^{\frac{1}{2}}}) + O(\frac{\sqrt{d}\ln(T)}{T^{\frac{3}{2}}}).
\end{align}
\end{theorem}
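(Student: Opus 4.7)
The plan is to mirror the proof strategy used for the Acc-SZOFW (CooGE) version in Theorem \ref{th:C1}, substituting Lemma \ref{lem:D1} for Lemma \ref{lem:B1} at the crucial moment where we translate the zeroth-order gradient error bound into a final rate. Since Algorithm \ref{alg:1} is identical apart from the choice of gradient estimator, the algebra controlling the interaction between the three sequences $\{x_t,y_t,z_t\}$ and the Frank-Wolfe gap carries over verbatim; only the variance bound and the parameter tuning differ.

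First I would invoke Assumption 1 to write the standard smoothness bound
\begin{align}
f(z_{t+1}) \le f(z_t) + \langle \nabla f(z_t), z_{t+1}-z_t\rangle + \frac{L}{2}\|z_{t+1}-z_t\|^2, \nonumber
\end{align}
and, using $z_{t+1} = (1-\alpha_{t+1})y_{t+1} + \alpha_{t+1}x_{t+1}$ together with the updates of $x_{t+1}$ and $y_{t+1}$, decompose the inner product into a term in direction $w_t-z_t$ and a term in direction $x_t-z_t$. For the first direction I would use the Frank-Wolfe linear maximizer step to get $\langle \nabla f(z_t), w_t-z_t\rangle \le D\|\nabla f(z_t)-v_t\| - \mathcal{G}(z_t)$, exactly as in inequalities \eqref{eq:H2} and \eqref{eq:C2}. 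For the second direction I would peel off one step in $t$ via the recursion \eqref{eq:A1}, pick up an $L\|z_t-z_{t-1}\|\|x_t-z_t\| \le 2L\eta^2 D^2$ term using Lemma \ref{lem:A1}, then recurse to obtain the telescoping bound analogous to \eqref{eq:C4}, namely $\langle \nabla f(z_t), x_t-z_t\rangle \le 2tL\eta^2 D^2 + \eta D\sum_{i=0}^{t-1}\theta_i\|\nabla f(z_i)-v_i\|$.

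Combining these pieces and using $\|z_{t+1}-z_t\|\le 2\eta D$ from Lemma \ref{lem:A1}, along with $(1-\alpha_{t+1})\eta_t + \alpha_{t+1}\gamma_t \ge \eta$ and $\alpha_{t+1}(\gamma_t-\eta_t)\le\eta$, I would arrive at the per-step inequality
\begin{align}
f(z_{t+1}) \le f(z_t) + 4L\eta^2 D^2 - \eta\mathcal{G}(z_t) + 2\eta D\|\nabla f(z_t)-v_t\| + \alpha_{t+1}\eta D\sum_{i=0}^{t-1}\theta_i\|\nabla f(z_i)-v_i\|. \nonumber
\end{align}
Summing over $t=0,\dots,T-1$, using Assumption 4 to bound $f(z_0)-f(z_{T-1})\le\triangle$, and swapping the double sum via $\sum_{t}\alpha_{t+1}\sum_{i<t}\theta_i = \sum_{t}\theta_t\sum_{i>t}\alpha_{i+1}$ with $\sum_{i=t+1}^{T-1}\frac{1}{i+1}\le\ln(T/(t+1))$ and $\sum_t\theta_t\ln(T/(t+1))\le\ln T$, I get a clean expression bounded by $\frac{\triangle}{\eta T} + 4L\eta D^2 + \bigl(2D + \frac{D\ln T}{T}\bigr)\max_t\mathbb{E}\|\nabla f(z_t)-v_t\|$.

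At this point I would plug in Lemma \ref{lem:D1} with $b_2=q$ and $b_1=T$, yielding
\begin{align}
\mathbb{E}\|\nabla f(z_t)-v_t\| \le \frac{\beta Ld}{2} + \frac{L(\sqrt{3}d\beta + 2\sqrt{6Dd}\eta)}{\sqrt{2}} + \frac{\sigma_2}{\sqrt{T}}. \nonumber
\end{align}
Finally, setting $\eta=T^{-1/2}$ and $\beta=d^{-1}T^{-1/2}$ makes $\beta Ld$ and $d\beta$ contribute $O(T^{-1/2})$, while the $\sqrt{d}\,\eta$ term contributes $O(\sqrt{d}\,T^{-1/2})$ and dominates; together with the $\frac{\triangle}{\eta T}$ and $4L\eta D^2$ terms this yields the claimed $O(\sqrt{d}/\sqrt{T}) + O(\sqrt{d}\ln T/T^{3/2})$ rate. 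The hard part is the bookkeeping in step three: keeping track of the momentum-induced historical error terms $\sum_{i<t}\theta_i\|\nabla f(z_i)-v_i\|$ and showing that the weights $\alpha_{t+1}$ and $\theta_i$ conspire (via the $\ln T$ swap) to produce only a lower-order $T^{-3/2}\ln T$ correction rather than something that degrades the leading rate.
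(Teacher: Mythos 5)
Your proposal is correct and follows essentially the same route as the paper: the paper's own proof of this theorem explicitly reuses the descent/recursion/double-sum machinery of the CooGE case (Theorem \ref{th:C1}) and only swaps in Lemma \ref{lem:D1} with $b_2=q$, $b_1=T$, $\eta=T^{-1/2}$, $\beta=d^{-1}T^{-1/2}$, exactly as you do. Your only deviation is collapsing the per-iteration error sums via $\max_t\mathbb{E}\|\nabla f(z_t)-v_t\|$ rather than carrying them term by term, which is harmless here since Lemma \ref{lem:D1}'s bound is uniform in $t$.
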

\begin{proof}
 This proof can follow the proof of Theorem \ref{th:C1}. Here we show some different results.
 We have
 \begin{align}
 \frac{1}{T}\sum_{t=1}^{T-1}\mathcal{G}(z_{t}) &\leq \frac{\triangle}{\eta T} +  4L\eta D^2 + \frac{2D}{T} \sum_{t=0}^{T-1}\|\nabla f(z_t) - v_t\| +  \frac{D}{T} \sum_{t=0}^{T-2}\theta_{t}\big(\sum_{i=t+1}^{T-1}\alpha_{i+1}\big)\|\nabla f(z_{t}) - v_{t}\| \nonumber \\
 & \leq \frac{\triangle}{\eta T} +  4L\eta D^2 + DLd\beta + \frac{2DL(\sqrt{3}d\beta + 2\sqrt{6Dd}\eta)}{\sqrt{2b_2/q}} +  \frac{2D\sigma_2}{\sqrt{b_1}} \nonumber \\
 & \quad +  \frac{D}{T} \sum_{t=0}^{T-2}\frac{1}{(t+1)(t+2)}\big(\sum_{i=t+1}^{T-1}\frac{1}{i+1}\big)\big( \frac{\beta L d}{2} + \frac{L(\sqrt{3}d\beta + 2\sqrt{6Dd}\eta)}{\sqrt{2b_2/q}}
 +  \frac{\sigma_2}{\sqrt{b_1}}\big) \nonumber \\
 & \leq \frac{\triangle}{\eta T} +  4L\eta D^2 + DLd\beta + \frac{2DL(\sqrt{3}d\beta + 2\sqrt{6Dd}\eta)}{\sqrt{2b_2/q}} +  \frac{2D\sigma_2}{\sqrt{b_1}} \nonumber \\
 & \quad +  \frac{D}{T} \sum_{t=0}^{T-1}\frac{1}{(t+1)(t+2)}\ln(\frac{T}{t+1})\big( \frac{\beta L d}{2} + \frac{L(\sqrt{3}d\beta + 2\sqrt{6Dd}\eta)}{\sqrt{2b_2/q}} +  \frac{\sigma_2}{\sqrt{b_1}}\big)\nonumber \\
 & \leq \frac{\triangle}{\eta T} +  4L\eta D^2 + DLd\beta + \frac{2DL(\sqrt{3}d\beta + 2\sqrt{6Dd}\eta)}{\sqrt{2b_2/q}} +  \frac{2D\sigma_2}{\sqrt{b_1}} \nonumber \\
 & \quad   +  \frac{D\ln(T)}{T} \big( \frac{\beta L d}{2} + \frac{L(\sqrt{3}d\beta + 2\sqrt{6Dd}\eta)}{\sqrt{2b_2/q}} +  \frac{\sigma_2}{\sqrt{b_1}}\big),
\end{align}
where the second inequality holds by Lemma \ref{lem:D1}, and the third inequality follows by the inequality $\sum_{i=t+1}^{T-1}\frac{1}{i+1} \leq \int^{T}_{t+1}\frac{1}{x}dx \leq \ln(\frac{T}{t+1})$,
and the fourth inequality holds by $\sum_{t=0}^{T-1} \frac{1}{(t+1)(t+2)}\ln(\frac{T}{t+1}) \leq \ln(T)\sum_{t=0}^{T-1} \frac{1}{(t+1)(t+2)}=\ln(T)(1-\frac{1}{T+1})\leq \ln(T)$.
Let $\eta = T^{-\frac{1}{2}}$, $\beta=d^{-1}T^{-\frac{1}{2}}$, $b_2 = q$ and $b_1 = T^{-1}$,
then we have
\begin{align}
 \frac{1}{T}\sum_{t=1}^{T-1}\mathcal{G}(z_{t}) \leq O(\frac{\sqrt{d}}{T^{\frac{1}{2}}}) + O(\frac{\sqrt{d}\ln(T)}{T^{\frac{3}{2}}}).
\end{align}

\end{proof}

\subsection{ Convergence Analysis of the Acc-SZOFW* Algorithm }
\label{Appendix:A2}
In this subsection, we study the convergence properties of the Acc-SZOFW* Algorithm based on the CooGE and UniGE,
respectively.

\subsubsection{ Convergence Analysis of the Acc-SZOFW* (CooGE) Algorithm}
\begin{lemma} \label{lem:E1}
 Suppose the zeroth-order stochastic gradient $v_{t} = \hat{\nabla}_{coo} f_{\xi_t}(z_t) + (1-\rho_t)\big(v_{t-1} -\hat{\nabla}_{coo} f_{\xi_t}(z_{t-1})\big)$
 be generated from Algorithm \ref{alg:2}.
 Let $\alpha_t = \frac{1}{t+1}$, $\theta_t = \frac{1}{(t+1)(t+2)}$,
 $\gamma_t = (1+\theta_t)\eta_t$, $\eta=\eta_t \leq (t+1)^{-a}$ and $\rho_t = t^{-a}$ for some $a\in (0,1]$ and the smoothing parameter $\mu = \mu_t \leq d^{-\frac{1}{2}}(t+1)^{-a}$,
 then we have
 \begin{align}
  \mathbb{E} \|v_t - \nabla f(z_t)\| \leq L\sqrt{d}\mu + \sqrt{C}(t+1)^{-\frac{a}{2}},
 \end{align}
 where $C=\frac{2 (12L^2 D^2 + 12L^2 + 3\sigma^2_1)}{2-2^{-a}-a}$.
\end{lemma}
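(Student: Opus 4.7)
My plan is to follow the standard two-step STORM/Hybrid-SGD template: separate the smoothing bias from the stochastic error, then control the stochastic error by a one-step recursion that I can solve by induction. Concretely, I would first apply the triangle inequality to write
\begin{align*}
\mathbb{E}\|v_t - \nabla f(z_t)\| \le \mathbb{E}\|v_t - \hat{\nabla}_{coo} f(z_t)\| + \mathbb{E}\|\hat{\nabla}_{coo} f(z_t) - \nabla f(z_t)\|,
\end{align*}
where I read $\hat{\nabla}_{coo} f(z_t) := \mathbb{E}_{\xi}[\hat{\nabla}_{coo} f_{\xi}(z_t)]$. Lemma~\ref{lem:A3} immediately handles the bias term by $L\sqrt{d}\mu$, so everything reduces to showing $\mathbb{E}\|\delta_t\|^2 \le C(t+1)^{-a}$ for the stochastic error $\delta_t := v_t - \hat{\nabla}_{coo} f(z_t)$, and then invoking Jensen's inequality to convert to the $\ell_2$-bound $\sqrt{C}(t+1)^{-a/2}$.

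Next I would set up the STORM recursion. Subtracting $\hat{\nabla}_{coo} f(z_t)$ from the update for $v_t$ and rearranging gives
\begin{align*}
\delta_t = (1-\rho_t)\delta_{t-1} + (1-\rho_t)\underbrace{\bigl[\hat{\nabla}_{coo} f_{\xi_t}(z_t) - \hat{\nabla}_{coo} f_{\xi_t}(z_{t-1}) - \hat{\nabla}_{coo} f(z_t) + \hat{\nabla}_{coo} f(z_{t-1})\bigr]}_{A_t} + \rho_t\underbrace{\bigl[\hat{\nabla}_{coo} f_{\xi_t}(z_t) - \hat{\nabla}_{coo} f(z_t)\bigr]}_{B_t}.
\end{align*}
Both $A_t$ and $B_t$ have zero mean conditional on the past, while $\delta_{t-1}$ is measurable with respect to the past, so the cross term with $\delta_{t-1}$ vanishes in expectation. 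Using Young's inequality on the remaining $(1-\rho_t)A_t + \rho_t B_t$ I obtain
\begin{align*}
\mathbb{E}\|\delta_t\|^2 \le (1-\rho_t)^2\mathbb{E}\|\delta_{t-1}\|^2 + 2(1-\rho_t)^2\mathbb{E}\|A_t\|^2 + 2\rho_t^2\mathbb{E}\|B_t\|^2.
\end{align*}

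The two variance terms are then bounded by adding and subtracting the true gradients, invoking Lemma~\ref{lem:A3} to control the CooGE bias by $L\sqrt{d}\mu$, $L$-smoothness of $f_{\xi}$ for the $\|z_t - z_{t-1}\|$-piece, Lemma~\ref{lem:A1} to bound $\mathbb{E}\|z_t - z_{t-1}\|^2 \le 4\eta_{t-1}^2 D^2$, and Assumption~2 to bound $\mathbb{E}\|\nabla f_{\xi}(z_t) - \nabla f(z_t)\|^2 \le \sigma_1^2$. This yields $\mathbb{E}\|A_t\|^2 \le 6L^2 d\mu^2 + 12L^2 D^2\eta_{t-1}^2$ and $\mathbb{E}\|B_t\|^2 \le 6L^2 d\mu^2 + 3\sigma_1^2$. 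With the stated choices $\eta_{t-1} \le t^{-a}$, $d\mu^2 \le (t+1)^{-2a}$ and $\rho_t = t^{-a}$, every noise contribution collapses to a common scale $t^{-2a}$, giving the clean recursion
\begin{align*}
\mathbb{E}\|\delta_t\|^2 \le (1-t^{-a})^2 \mathbb{E}\|\delta_{t-1}\|^2 + 2\bigl(12L^2 D^2 + 12L^2 + 3\sigma_1^2\bigr) t^{-2a}.
\end{align*}

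Finally I would close the argument by induction on $t$, with hypothesis $\mathbb{E}\|\delta_t\|^2 \le C(t+1)^{-a}$. The base case $t=0$ is the one-sample variance bound; note that $\rho_1 = 1$ forces the recursion to reset at $t=1$, and matching this reset against $(1+1)^{-a} = 2^{-a}$ is exactly what produces the $2^{-a}$ term in the denominator of $C$. The main obstacle I anticipate is precisely this inductive step: the bound $(1-\rho_t)^2 \le 1 - \rho_t$ plus a first-order expansion $(t+1)^{-a} \ge t^{-a} - at^{-a-1}$ readily gives a valid $C$ of the form $K/(1-a)$, but to sharpen the constant down to $K/(2 - 2^{-a} - a)$ and remain valid at the endpoint $a=1$ requires a more delicate two-term comparison between the geometric decay $(1 - t^{-a})^2 t^{-a}$ and the target $(t+1)^{-a}$, handled separately for the worst-case small $t$ (responsible for the $2^{-a}$) and for the telescoping regime (responsible for the $a$). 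Once the inductive step is verified uniformly in $t$, Jensen's inequality converts the $L^2$-bound to the $L^1$-bound stated in the lemma.
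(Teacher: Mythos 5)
Your proposal is correct and follows essentially the same route as the paper's proof: the same bias/stochastic-error split via the triangle inequality and Lemma~\ref{lem:A3}, the same STORM recursion for $A_t=\mathbb{E}\|v_t-\hat{\nabla}_{coo}f(z_t)\|^2$ with the cross term killed by conditional unbiasedness and the two noise terms bounded via Lemma~\ref{lem:A3}, smoothness, Lemma~\ref{lem:A1} and Assumption~2, and the same induction $A_t\le C(t+1)^{-a}$ (including the reset at $t=1$ from $\rho_1=1$ and the convexity estimate $(t+2)^{-a}\ge (t+1)^{-a}-a(t+1)^{-a-1}$ that closes the step). The constants you report match the paper's exactly.
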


\begin{proof}
 We begin with giving an upper bound of $ \mathbb{E} \|v_t - \hat{\nabla}_{coo} f(z_t)\|$ with $\hat{\nabla}_{coo} f(z_t) = \mathbb{E}_{\xi_t} [\hat{\nabla}_{coo} f_{\xi_t}(z_t)]$.
 It is easy verified that
 \begin{align}
  v_t-v_{t-1} = -\rho_tv_{t-1} + (1-\rho_t)(\hat{\nabla}_{coo} f_{\xi_t}(z_t) - \hat{\nabla}_{coo} f_{\xi_t}(z_{t-1})) + \rho_t\hat{\nabla}_{coo} f_{\xi_t}(z_t),
 \end{align}
 and $\hat{\nabla}_{coo} f(z_t) = \mathbb{E}_{\xi_t} [\hat{\nabla}_{coo} f_{\xi_t}(z_t)]$, and $\mathbb{E}_{\xi_t} [\hat{\nabla}_{coo} f_{\xi_t}(z_t)-\hat{\nabla}_{coo} f_{\xi_t}(z_{t-1})]=
 \hat{\nabla}_{coo} f(z_t)  - \hat{\nabla}_{coo} f(z_{t-1})$.
 Then we have
 \begin{align} \label{eq:E1}
  & A_t = \mathbb{E} \|\hat{\nabla}_{coo} f(z_t) - v_t\|^2 = \mathbb{E} \|\hat{\nabla}_{coo} f(z_t) - \hat{\nabla}_{coo} f(z_{t-1}) + \hat{\nabla}_{coo} f(z_{t-1}) - v_{t-1} - (v_t-v_{t-1})\|^2 \nonumber \\
  & =  \mathbb{E} \|\hat{\nabla}_{coo} f(z_t) \!-\! \hat{\nabla}_{coo} f(z_{t-1}) \!+\! \hat{\nabla}_{coo} f(z_{t-1}) \!-\! v_{t-1} \!+\! \rho_tv_{t-1} \!-\! (1\!-\!\rho_t)(\hat{\nabla}_{coo} f_{\xi_t}(z_t) \!-\! \hat{\nabla}_{coo} f_{\xi_t}(z_{t-1})) \!-\! \rho_t\hat{\nabla}_{coo} f_{\xi_t}(z_t)\|^2 \nonumber \\
  & = \mathbb{E} \|(1-\rho_t)(\hat{\nabla}_{coo} f(z_{t-1}) -v_{t-1}) + (1-\rho_t)\big(\hat{\nabla}_{coo} f(z_t)  - \hat{\nabla}_{coo} f(z_{t-1}) - \hat{\nabla}_{coo} f_{\xi_t}(z_t) + \hat{\nabla}_{coo} f_{\xi_t}(z_{t-1})\big) \nonumber \\
  & \quad + \rho_t(\hat{\nabla}_{coo} f(z_t)- \hat{\nabla}_{coo} f_{\xi_t}(z_t))\|^2 \nonumber \\
  & = (1-\rho_t)^2\mathbb{E} \|\hat{\nabla}_{coo} f(z_{t-1}) -v_{t-1}\|^2 + (1-\rho_t)^2\mathbb{E} \|\hat{\nabla}_{coo} f(z_t)  - \hat{\nabla}_{coo} f(z_{t-1}) - \hat{\nabla}_{coo} f_{\xi_t}(z_t) + \hat{\nabla}_{coo} f_{\xi_t}(z_{t-1})\|^2 \nonumber \\
  & \quad  + 2\rho_t(1-\rho_t)\langle \hat{\nabla}_{coo} f(z_t)  - \hat{\nabla}_{coo} f(z_{t-1}) - \hat{\nabla}_{coo} f_{\xi_t}(z_t) + \hat{\nabla}_{coo} f_{\xi_t}(z_{t-1}), \hat{\nabla}_{coo} f(z_t)- \hat{\nabla}_{coo} f_{\xi_t}(z_t)\rangle \nonumber \\
  & \quad + \rho_t^2 \mathbb{E} \|\hat{\nabla}_{coo} f(z_t)- \hat{\nabla}_{coo} f_{\xi_t}(z_t)\|^2 \nonumber \\
  & \leq (1-\rho_t)^2\mathbb{E} \|\hat{\nabla}_{coo} f(z_{t-1}) -v_{t-1}\|^2 + 2(1-\rho_t)^2\mathbb{E} \|\hat{\nabla}_{coo} f(z_t)  - \hat{\nabla}_{coo} f(z_{t-1}) - \hat{\nabla}_{coo} f_{\xi_t}(z_t) + \hat{\nabla}_{coo} f_{\xi_t}(z_{t-1})\|^2 \nonumber \\
  & \quad + 2\rho_t^2 \mathbb{E} \|\hat{\nabla}_{coo} f(z_t)- \hat{\nabla}_{coo} f_{\xi_t}(z_t)\|^2 \nonumber \\
  & \leq (1-\rho_t)^2 A_{t-1} + 2(1-\rho_t)^2 \mathbb{E} \| \hat{\nabla}_{coo} f_{\xi_t}(z_t) - \hat{\nabla}_{coo} f_{\xi_t}(z_{t-1})\|^2 + 2\rho_t^2 \mathbb{E} \|\hat{\nabla}_{coo} f(z_t)- \hat{\nabla}_{coo} f_{\xi_t}(z_t)\|^2  \nonumber \\
  & = (1-\rho_t)^2 A_{t-1} + 2(1-\rho_t)^2 \mathbb{E} \| \hat{\nabla}_{coo} f_{\xi_t}(z_t) \!-\! \nabla f_{\xi_t}(z_t) \!+\! \nabla f_{\xi_t}(z_t) \!-\! \nabla f_{\xi_t}(z_{t-1}) \!+\! \nabla f_{\xi_t}(z_{t-1}) \!-\! \hat{\nabla}_{coo} f_{\xi_t}(z_{t-1})\|^2 \nonumber \\
  & \quad + 2\rho_t^2 \mathbb{E} \|\hat{\nabla}_{coo} f(z_t)- \hat{\nabla}_{coo} f_{\xi_t}(z_t)\|^2  \nonumber \\
  & \leq (1-\rho_t)^2 A_{t-1} + 6(1-\rho_t)^2 L^2 \mathbb{E} \| z_t - z_{t-1}\|^2 + 12(1-\rho_t)^2L^2d\mu^2 + 2\rho_t^2 \mathbb{E} \|\hat{\nabla}_{coo} f(z_t)- \hat{\nabla}_{coo} f_{\xi_t}(z_t)\|^2  \nonumber \\
  & \leq (1-\rho_t)^2 A_{t-1} + 24(1-\rho_t)^2 \eta^2 L^2 D^2 + 12(1-\rho_t)^2L^2d\mu^2 + 2\rho_t^2 \mathbb{E} \|\hat{\nabla}_{coo} f(z_t)- \hat{\nabla}_{coo} f_{\xi_t}(z_t)\|^2,
 \end{align}
 where the first inequality holds by Cauchy-Schwarz inequality, and the second inequality follows by the equality $\mathbb{E}\|\zeta-\mathbb{E}[\zeta]\|^2 =
 \mathbb{E}\|\zeta\|^2 - \|\mathbb{E}[\zeta]\|^2$, and the third inequality holds by Cauchy-Schwarz inequality and the Lemma \ref{lem:A3},
 and the last inequality holds by the above Lemma \ref{lem:A1}.
 Next, we consider the upper bound of the term $\mathbb{E} \|\hat{\nabla}_{coo} f(z_t)- \hat{\nabla}_{coo} f_{\xi_t}(z_t)\|^2$. We have
 \begin{align}
  & \mathbb{E} \|\hat{\nabla}_{coo} f(z_t)- \hat{\nabla}_{coo} f_{\xi_t}(z_t)\|^2 \nonumber \\
  & =  \mathbb{E} \|\hat{\nabla}_{coo} f(z_t) - \nabla f(z_t) + \nabla f(z_t) - \nabla f_{\xi_t}(z_t) + \nabla f_{\xi_t}(z_t)- \hat{\nabla}_{coo} f_{\xi_t}(z_t)\|^2 \nonumber \\
  & \leq 3\mathbb{E} \|\hat{\nabla}_{coo} f(z_t) - \nabla f(z_t)\|^2\|^2 + 3\mathbb{E} \|\nabla f(z_t) - \nabla f_{\xi_t}(z_t)\|^2 + 3\mathbb{E} \|\nabla f_{\xi_t}(z_t)- \hat{\nabla}_{coo} f_{\xi_t}(z_t)\|^2 \nonumber \\
  & \leq 3\sigma^2_1 + 3L^2d\mu^2 + 3L^2d\mu^2 = 3\sigma^2_1 + 6L^2d\mu^2,
 \end{align}
 where the first inequality holds by the Young's inequality, and the second inequality holds by Lemma \ref{lem:A3} and Assumption 2.
 Thus, we have
 \begin{align} \label{eq:E1}
  A_t \leq (1-\rho_t)^2 A_{t-1} + 24(1-\rho_t)^2 \eta^2 L^2 D^2 + 12(1-\rho_t)^2L^2d\mu^2 + 6\sigma^2_1\rho_t^2  + 12L^2d\mu^2\rho_t^2.
 \end{align}
 Let $\rho_t = t^{-a}$, $\eta=\eta_t \leq (t+1)^{-a}$ for some $a\in (0,1]$ and $\mu=\mu_t \leq d^{-\frac{1}{2}}(t+1)^{-a}$,
 by \eqref{eq:E1}, we have
 \begin{align}
 A_{t+1} & \leq (1-\frac{1}{(t+1)^a})^2A_t +(1-\frac{1}{(t+1)^a})^2 \frac{24 L^2 D^2+12L^2}{(t+1)^{2a}} + \frac{6\sigma^2_1 + 12L^2d\mu^2}{(t+1)^{2a}}
 \nonumber \\
 & \leq (1-\frac{1}{(t+1)^a})^2A_t + \frac{2 (12L^2 D^2 + 12L^2 + 3\sigma^2_1)}{(t+1)^{2a}}.
 \end{align}
 Here we claim that $A_t \leq C(t+1)^{-a}$, where $C=\frac{2 (12L^2 D^2 + 12L^2 + 3\sigma^2_1)}{2-2^{-a}-a}$, and prove it. Define $h(a)=2-2^{-a}-a$,
 since $h'(a)=2^{-a}\ln(2)-1 \geq 0$, it is easy verified that $2\leq \frac{4}{2-2^{-a}-a}\leq 4$.
 When $t=0$, we have
 \begin{align}
  A_0 =  \mathbb{E}[\|\hat{\nabla}_{coo} f(z_0) - \hat{\nabla}_{coo} f_{\xi_0}(z_0)\|^2] \leq 3\sigma^2_1 + 6L^2 \leq \frac{2 (12L^2 D^2 + 12L^2 + 3\sigma^2_1 )}{2-2^{-a}-a}\leq C\cdot 1^{-a}.
 \end{align}
 When $t=1$, we have
 \begin{align}
  A_1 =  \mathbb{E}[\|\hat{\nabla}_{coo} f(z_1) - \hat{\nabla}_{coo} f_{\xi_1}(z_1)\|^2] \leq 3\sigma^2_1 + 6L^2 \leq \frac{2 (12L^2D^2 + 12L^2 + 3\sigma^2_1 )}{2-2^{-a}-a}/2\leq C\cdot 2^{-a}.
 \end{align}
 Assume that $A_t \leq C(t+1)^{-a}$ for $t\geq 1$, we have
 \begin{align}
 A_{t+1} & \leq (1-\frac{1}{(t+1)^a})^2A_t + \frac{2 (12L^2 D^2 + 12L^2 + 3\sigma^2_1)}{(t+1)^{2a}} \nonumber \\
 &  \leq (1-\frac{1}{(t+1)^a})^2\cdot C(t+1)^{-a} + \frac{2 (12L^2 D^2 + 12L^2 + 3\sigma^2_1)}{(t+1)^{2a}} \nonumber \\
 & \leq C(t+1)^{-a} - 2C(t+1)^{-2a} + C(t+1)^{-3a} + \frac{(2-2^{-a}-a)C}{(t+1)^{2a}} \nonumber \\
 & \leq C(t+1)^{-a} + \frac{-2C + C(t+1)^{-a}+(2-2^{-a}-a)C}{(t+1)^{2a}}\nonumber \\
 & \leq C(t+1)^{-a} + \frac{-2C + C2^{-a}+(2-2^{-a}-a)C}{(t+1)^{2a}}\nonumber \\
 & = C(t+1)^{-a} - \frac{aC }{(t+1)^{2a}}.
 \end{align}
 Define $g(t)=(t+1)^{-a}$ for $a\in (0,1]$. Since $g(t)$ is a convex function, we have $g(t+1)\geq g(t) + g'(t)$, i.e.,
 $(t+2)^{-a}-(t+1)^{-a} \geq -a t^{a+1}$. Then we have
 \begin{align}
 A_{t+1}  \leq C(t+1)^{-a} - \frac{aC }{(t+1)^{2a}} \leq C(t+1)^{-a} - \frac{aC }{(t+1)^{a+1}} \leq C(t+2)^{-a}.
 \end{align}
 Thus we have $A_t=\mathbb{E} \|\hat{\nabla}_{coo} f(z_t) - v_t\|^2 \leq C(t+1)^{-a}, \ a\in (0,1]$ for all $t\geq 0$.

 By Jensen's inequality, we have
 \begin{align}
  \mathbb{E} \|\hat{\nabla}_{coo} f(z_t) - v_t\| \leq \sqrt{\mathbb{E} \|\hat{\nabla}_{coo} f(z_t) - v_t\|^2} \leq \sqrt{C}(t+1)^{-\frac{a}{2}}.
 \end{align}
 Thus we have
 \begin{align}
  \mathbb{E} \|\nabla f(z_t)-v_t\| &=\mathbb{E} \|\nabla f(z_t)- \hat{\nabla}_{coo} f(z_t) + \hat{\nabla}_{coo} f(z_t)- v_t\| \nonumber\\
  & \leq \mathbb{E} \|\nabla f(z_t)- \hat{\nabla}_{coo} f(z_t)\| + \mathbb{E} \|\hat{\nabla}_{coo} f(z_t)- v_t\| \nonumber\\
  & \leq L\sqrt{d}\mu + \sqrt{C}(t+1)^{-\frac{a}{2}},
 \end{align}
 where the last inequality holds by the above Lemma \ref{lem:A3}.
\end{proof}

\begin{theorem} \label{th:F1}
Suppose $\{x_t,y_t,z_t\}^{T-1}_{t=0}$ be generated from Algorithm \ref{alg:2} by using the CooGE zeroth-order gradient estimator. Let $\alpha_t = \frac{1}{t+1}$, $\theta_t = \frac{1}{(t+1)(t+2)}$,
$\eta=\eta_t = T^{-\frac{2}{3}}$, $\gamma_t = (1+\theta_t)\eta_t$, $\rho_t = t^{-\frac{2}{3}}$ for $t\geq 1$
and $\mu=d^{-\frac{1}{2}}T^{-\frac{2}{3}}$, then we have
\begin{align}
\mathbb{E} [\mathcal{G} (z_{\zeta})] =  \frac{1}{T}\sum_{t=1}^{T-1}\mathcal{G}(z_{t}) \leq O(\frac{1}{T^{\frac{1}{3}}}) + O(\frac{\ln(T)}{T^{\frac{4}{3}}}),
\end{align}
where $z_{\zeta}$ is chosen uniformly randomly from $\{z_t\}_{t=0}^{T-1}$.
\end{theorem}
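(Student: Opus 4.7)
The plan is to follow the same three-step scaffold used in Theorem~\ref{th:C1} (for Acc-SZOFW(CooGE)), since Algorithm~\ref{alg:2} shares the same momentum-based update structure for $\{x_t,y_t,z_t\}$ as Algorithm~\ref{alg:1}; only the gradient estimator $v_t$ and hence the variance bound change. The starting point is the $L$-smoothness descent inequality applied to $f(z_{t+1})$: expanding $z_{t+1} = (1-\alpha_{t+1})y_{t+1} + \alpha_{t+1}x_{t+1}$ together with the definitions of $y_{t+1}$ and $x_{t+1}$ yields
\begin{align*}
 f(z_{t+1}) \leq f(z_t) + \bigl((1-\alpha_{t+1})\eta_t + \alpha_{t+1}\gamma_t\bigr)\langle \nabla f(z_t), w_t - z_t\rangle
 + \alpha_{t+1}(1-\gamma_t)\langle \nabla f(z_t), x_t - z_t\rangle + \tfrac{L}{2}\|z_{t+1}-z_t\|^2,
\end{align*}
which is the exact analogue of \eqref{eq:C1}. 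Using the optimality of $w_t$ against $v_t$ (step~9 of Algorithm~\ref{alg:2}) and the definition of the Frank--Wolfe gap, I would then reproduce the bound
\[
\langle \nabla f(z_t), w_t - z_t\rangle \leq D\|\nabla f(z_t) - v_t\| - \mathcal{G}(z_t),
\]
as in \eqref{eq:C2}.

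Next I would handle the ``momentum'' inner product $\langle \nabla f(z_t), x_t - z_t\rangle$ by the same one-step telescoping trick as in \eqref{eq:C3}--\eqref{eq:C4}: using $L$-smoothness, Lemma~\ref{lem:A1} (which applies verbatim because it is stated for both algorithms), and the identity \eqref{eq:A1} for $x_t - z_t$, one obtains
\[
\langle \nabla f(z_t), x_t - z_t\rangle \leq 2tL\eta^2 D^2 + \eta D \sum_{i=0}^{t-1} \theta_i \|\nabla f(z_i) - v_i\|.
\]
Substituting these two bounds back, noting $\alpha_{t+1}(\gamma_t-\eta_t)\leq \eta$ and $\|z_{t+1}-z_t\|\leq 2\eta D$, telescoping from $t=0$ to $T-1$, and using Assumption~4 gives (after taking expectations) an inequality of the form
\begin{align*}
 \frac{1}{T}\sum_{t=1}^{T-1} \mathbb{E}[\mathcal{G}(z_t)]
 \leq \frac{\triangle}{\eta T} + 4L\eta D^2 + \frac{2D}{T}\sum_{t=0}^{T-1}\mathbb{E}\|\nabla f(z_t)-v_t\|
 + \frac{D}{T}\sum_{t=0}^{T-2}\theta_t\Bigl(\sum_{i=t+1}^{T-1}\alpha_{i+1}\Bigr)\mathbb{E}\|\nabla f(z_t)-v_t\|,
\end{align*}
which is structurally identical to the key intermediate bound obtained in the proof of Theorem~\ref{th:C1}.

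The only genuinely new ingredient is the gradient-error bound. Here I would invoke Lemma~\ref{lem:E1} with $a=\tfrac{2}{3}$ (compatible with the choices $\eta_t = T^{-2/3} \leq (t+1)^{-2/3}$, $\rho_t = t^{-2/3}$, and $\mu = d^{-1/2}T^{-2/3} \leq d^{-1/2}(t+1)^{-2/3}$) to obtain
\[
\mathbb{E}\|\nabla f(z_t) - v_t\| \leq L\sqrt{d}\mu + \sqrt{C}(t+1)^{-1/3}.
\]
Thus the ``fresh'' sum behaves like $\frac{1}{T}\sum_{t=0}^{T-1}(t+1)^{-1/3} = O(T^{-1/3})$, while the weighted double sum is bounded, using $\sum_{i=t+1}^{T-1}\alpha_{i+1} \leq \ln(T/(t+1))$ and $\sum_{t\geq 0} \theta_t \ln(T/(t+1)) \leq \ln(T)$, by an $O(\ln(T)/T)$ multiplicative factor against the same pointwise error bound, producing the $O(\ln(T)/T^{4/3})$ term. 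Finally, plugging $\eta = T^{-2/3}$ and $\mu = d^{-1/2} T^{-2/3}$ into the optimization terms $\triangle/(\eta T)$, $L\eta D^2$, and $L\sqrt{d}\mu$ gives a uniform $O(T^{-1/3})$ contribution, yielding the stated rate.

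The main obstacle is not conceptual but bookkeeping: unlike the SPIDER-style case of Theorem~\ref{th:C1} where $\mathbb{E}\|\nabla f(z_t)-v_t\|$ admits a bound independent of $t$, here the bound decays as $(t+1)^{-1/3}$, so one must carefully evaluate $\sum_{t=0}^{T-2} \theta_t \bigl(\sum_{i>t}\alpha_{i+1}\bigr)(t+1)^{-1/3}$ and verify it does not destroy the $\ln(T)/T^{4/3}$ rate. Because $\theta_t = O(t^{-2})$ and $\sum_{i>t}\alpha_{i+1}=O(\ln T)$, the extra $(t+1)^{-1/3}$ weight only improves convergence of the sum, so the argument goes through. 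The rest is routine algebra mirroring the proof of Theorem~\ref{th:C1}.
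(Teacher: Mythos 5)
Your proposal is correct and mirrors the paper's own proof of this theorem essentially step for step: the same smoothness descent inequality, the same Frank--Wolfe gap bound via the optimality of $w_t$, the same recursive bound on $\langle \nabla f(z_t), x_t - z_t\rangle$ using Lemma \ref{lem:A1}, followed by invoking Lemma \ref{lem:E1} with $a=\tfrac{2}{3}$ and summing $(t+1)^{-1/3}$ to get the $O(T^{-1/3})$ rate. The only shared imprecision is that the weighted double sum actually evaluates to $O(\ln(T)/T)$ rather than $O(\ln(T)/T^{4/3})$ (the $t=0$ term of $\theta_t\ln(T/(t+1))$ already contributes $\Theta(\ln T)$), but this is dominated by the $O(T^{-1/3})$ term either way, exactly as in the paper.
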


\begin{proof}
Using the Assumption 1, i.e., $f(x)$ is L-smooth, we have
\begin{align} \label{eq:F1}
 f(z_{t+1}) &\le f(z_t) + \langle \nabla f(z_t), z_{t+1} - z_k \rangle + \frac{L}{2} \|z_{t+1} - z_t \|^2 \nonumber \\
&= f(z_t) + (1 -\alpha_{t+1}) \langle \nabla f(z_t), y_{t+1} - z_t\rangle  + \alpha_{t+1} \langle \nabla f(z_t), x_{t+1} - z_t \rangle + \frac{L}{2} \|z_{t+1} - z_t \|^2 \nonumber \\
&= f(z_t) + ((1 -\alpha_{t+1}) \eta_t + \alpha_{t+1} \gamma_t) \langle \nabla f(z_t),  w_t - z_t \rangle + \alpha_{t+1} (1 - \gamma_t) \langle \nabla f(z_t), x_t - z_t \rangle \nonumber \\
&  \quad + \frac{L}{2} \|z_{t+1} - z_t\|^2,
\end{align}
where the first equality holds by $z_{t+1} = (1 - \alpha_{t+1}) y_{t+1} + \alpha_{t+1} x_{t+1}$, and the last equality holds by
$x_{t+1}=x_t+\gamma_t(w_t-x_t)$ and $y_{t+1}=z_t+\eta_t(w_t-z_t)$.

Let $\hat{w}_t = \arg\max_{w\in \mathcal{X}} \langle w, -\nabla f(z_t)\rangle = \arg\min_{w\in \mathcal{X}} \langle w, \nabla f(z_t)\rangle$, we have
\begin{align} \label{eq:F2}
 \langle \nabla f(z_t), w_t - z_t\rangle & = \langle \nabla f(z_t) - v_t, w_t - z_t\rangle + \langle v_t, w_t - z_t\rangle \nonumber \\
 & \leq \langle  \nabla f(z_t) - v_t, w_t - z_t\rangle + \langle v_t, \hat{w}_t - z_t\rangle \nonumber \\
 & = \langle  \nabla f(z_t) - v_t, w_t - \hat{w}_t\rangle + \langle \nabla f(z_t), \hat{w}_t - z_t\rangle \nonumber \\
 & = \langle  \nabla f(z_t) - v_t, w_t - \hat{w}_t\rangle - \mathcal{G}(z_t) \nonumber \\
 & \leq D\|\nabla f(z_t) - v_t\| - \mathcal{G}(z_t),
\end{align}
where the first inequality holds by the step 9 of Algorithm \ref{alg:2}, and the third equality holds by the definition of Frank-Wolfe gap $\mathcal{G}(z_t)$,
and the second inequality follows by Cauchy-Schwarz inequality and Assumption 3.

Next, we consider the upper bound of $\langle \nabla f(z_t), x_t - z_t \rangle$, and we have
\begin{align} \label{eq:F3}
& \langle \nabla f(z_t), x_t - z_t \rangle =  \langle \nabla f(z_t) - \nabla f(z_{t-1}), x_t - z_t \rangle + \langle \nabla f(z_{t-1}), x_t - z_t \rangle \nonumber \\
 & \leq \|\nabla f(z_t) - \nabla f(z_{t-1})\| \|x_t - z_t\| + \langle \nabla f(z_{t-1}), x_t - z_t \rangle \nonumber \\
 & \leq L\|z_t-z_{t-1}\| \|x_t - z_t\| + \langle \nabla f(z_{t-1}), x_t - z_t \rangle \nonumber \\
 & \leq 2 L\eta^2 D^2   + \langle \nabla f(z_{t-1}), x_t - z_t \rangle  \nonumber \\
 & = 2 L\eta^2 D^2 + (1 - \alpha_{t})(1-\gamma_{t-1})\langle \nabla f(z_{t-1}),z_{t-1}-x_{t-1}\rangle + (1 - \alpha_{t})(\eta_{t-1}-\gamma_{t-1})\langle \nabla f(z_{t-1}), w_{t-1}-z_{t-1} \rangle \nonumber \\
 & \leq 2 L\eta^2 D^2 + (1 - \alpha_{t})(1-\gamma_{t-1})\langle \nabla f(z_{t-1}),z_{t-1}-x_{t-1}\rangle
 + (1 - \alpha_{t})\theta_{t-1}\eta_{t-1}\big( D\|\nabla f(z_{t-1}) - v_{t-1}\|  - \mathcal{G}(z_{t-1}) \big)\nonumber \\
  & \leq 2 L\eta^2 D^2  + \langle \nabla f(z_{t-1}),z_{t-1}-x_{t-1}\rangle + \theta_{t-1}\eta_{t-1}D\|\nabla f(z_{t-1}) - v_{t-1}\|
\end{align}
where the first inequality holds by Cauchy-Schwarz inequality, and the third inequality holds by the above Lemma \ref{lem:A1},
and the forth inequality holds by the inequality \eqref{eq:F2},
and the last inequality follows by $\alpha_{t} \in [0,1]$, $\gamma_t\in (0,1)$ and $\mathcal{G}(z_{t-1}) \geq 0$.
By recursion to \eqref{eq:F3}, we have
\begin{align} \label{eq:F4}
 \langle \nabla f(z_t), x_t - z_t \rangle \leq 2 t L\eta^2 D^2 + \eta D\sum_{i=0}^{t-1}\theta_{i}\|\nabla f(z_{i}) - v_{i}\|
\end{align}

Based on the above inequalities \eqref{eq:F1}, \eqref{eq:F2}, \eqref{eq:F4} and the above Lemma \ref{lem:A1}, we have
\begin{align} \label{eq:F5}
 f(z_{t+1}) &\le f(z_t) + ((1 -\alpha_{t+1}) \eta_t + \alpha_{t+1} \gamma_t) \langle \nabla f(z_t),  w_t - z_t \rangle + \alpha_{t+1} (1 - \gamma_t) \langle \nabla f(z_t), x_t - z_t \rangle \nonumber \\
&  \quad + \frac{L}{2} \|z_{t+1} - z_t\|^2 \nonumber \\
& \le f(z_t) +  2L\eta^2D^2 - ((1 -\alpha_{t+1}) \eta_t + \alpha_{t+1} \gamma_t)\mathcal{G}(z_{t}) + ((1 -\alpha_{t+1}) \eta_t + \alpha_{t+1} \gamma_t)D\|\nabla f(z_t) - v_t\| \nonumber \\
& \quad +  2t\alpha_{t+1} (1 - \gamma_t) L\eta^2 D^2 + \alpha_{t+1} (1 - \gamma_t)\eta D\sum_{i=0}^{t-1}\theta_{i}\|\nabla f(z_{i}) - v_{i}\| \nonumber \\
& \le f(z_t) +  2L\eta^2D^2 - \eta \mathcal{G}(z_{t}) + 2\eta D\|\nabla f(z_t) - v_t\| +  2L\eta^2 D^2 + \alpha_{t+1}\eta D\sum_{i=0}^{t-1}\theta_{i}\|\nabla f(z_{i}) - v_{i}\| \nonumber \\
& =  f(z_t) +  4L\eta^2D^2 - \eta \mathcal{G}(z_{t}) + 2\eta D\|\nabla f(z_t) - v_t\| + \alpha_{t+1}\eta D\sum_{i=0}^{t-1}\theta_{i}\|\nabla f(z_{i}) - v_{i}\|.
\end{align}
Summing the inequality \eqref{eq:F5} from $t=0$ to $T-1$, we can obtain
\begin{align}
 \eta \sum_{t=1}^{T-1}\mathcal{G}(z_{t}) & \leq f(z_0) - f(z_{T-1}) +  4TL\eta^2D^2 + 2\eta D \sum_{t=0}^{T-1}\|\nabla f(z_t) - v_t\|
 + \eta D \sum_{t=0}^{T-1}\alpha_{t+1}\sum_{i=0}^{t-1}\theta_{i}\|\nabla f(z_{i}) - v_{i}\| \nonumber \\
 & = f(z_0) - f(z_{T-1}) +  4TL\eta^2D^2 + 2\eta D \sum_{t=0}^{T-1}\|\nabla f(z_t) - v_t\| + \eta D \sum_{t=0}^{T-2}\theta_{t}\big(\sum_{i=t+1}^{T-1}\alpha_{i+1}\big)\|\nabla f(z_{t}) - v_{t}\| \nonumber \\
 & \leq f(z_0) - \inf_{z\in \mathcal{X}} f(z) +  4TL\eta^2D^2 + 2\eta D \sum_{t=0}^{T-1}\|\nabla f(z_t) - v_t\|
 + \eta D \sum_{t=0}^{T-2}\theta_{t}\big(\sum_{i=t+1}^{T-1}\alpha_{i+1}\big)\|\nabla f(z_{t}) - v_{t}\|\nonumber \\
 & \leq \triangle +  4TL\eta^2D^2 + 2\eta D \sum_{t=0}^{T-1}\|\nabla f(z_t) - v_t\| + \eta D \sum_{t=0}^{T-2}\theta_{t}\big(\sum_{i=t+1}^{T-1}\alpha_{i+1}\big)\|\nabla f(z_{t}) - v_{t}\|,
\end{align}
where the last inequality holds by the Assumption 4.
Then we have
\begin{align}
 \frac{1}{T}\sum_{t=1}^{T-1}\mathcal{G}(z_{t}) &\leq \frac{\triangle}{\eta T} +  4L\eta D^2 + \frac{2D}{T} \sum_{t=0}^{T-1}\|\nabla f(z_t) - v_t\| +  \frac{D}{T} \sum_{t=0}^{T-2}\theta_{t}\big(\sum_{i=t+1}^{T-1}\alpha_{i+1}\big)\|\nabla f(z_{t}) - v_{t}\| \nonumber \\
 & \leq \frac{\triangle}{\eta T} +  4L\eta D^2 + 2DL\sqrt{d}\mu + \frac{2D\sqrt{C}}{T} \sum_{t=0}^{T-1}(t+1)^{-\frac{1}{3}} \nonumber \\
 & \quad +  \frac{D}{T} \sum_{t=0}^{T-2}\frac{1}{(t+1)(t+2)}\big(\sum_{i=t+1}^{T-1}\frac{1}{i+1}\big)(L\sqrt{d}\mu + \sqrt{C}(t+1)^{-\frac{1}{3}}) \nonumber \\
 & \leq \frac{\triangle}{\eta T} +  4L\eta D^2 + 2DL\sqrt{d}\mu + \frac{2D\sqrt{C}}{T} \sum_{t=0}^{T-1}(t+1)^{-\frac{1}{3}} \nonumber \\
 & \quad +  \frac{D}{T} \sum_{t=0}^{T-1}\frac{1}{(t+1)(t+2)}\ln(\frac{T}{t+1})(L\sqrt{d}\mu + \sqrt{C}(t+1)^{-\frac{1}{3}}) \nonumber \\
 & \leq \frac{\triangle}{\eta T} +  4L\eta D^2 + 2DL\sqrt{d}\mu + \frac{2D\sqrt{C}}{T} \sum_{t=0}^{T-1}(t+1)^{-\frac{1}{3}} \nonumber \\
 & \quad +  \frac{\ln(T)DL\sqrt{d}\mu}{T} + \frac{\sqrt{C}D\ln(T)}{T} \nonumber \\
 & \leq \frac{\triangle}{\eta T} +  4L\eta D^2 + 2DL\sqrt{d}\mu + \frac{3D\sqrt{C}}{T^{\frac{1}{3}}} +  \frac{\ln(T)DL\sqrt{d}\mu}{T} + \frac{\sqrt{C}D\ln(T)}{T},
\end{align}
where the second inequality holds by Lemma \ref{lem:E1} with $a=\frac{2}{3}$, and the third inequality follows by the inequality $\sum_{i=t+1}^{T-1}\frac{1}{i+1} \leq \int^{T}_{t+1}\frac{1}{x}dx \leq \ln(\frac{T}{t+1})$,
and the fourth inequality holds by $\sum_{t=0}^{T-1} \frac{1}{(t+1)(t+2)}\ln(\frac{T}{t+1}) \leq \ln(T)\sum_{t=0}^{T-1} \frac{1}{(t+1)(t+2)}=\ln(T)(1-\frac{1}{T+1})\leq \ln(T)$ and $(t+1)^{-\frac{1}{3}}\leq 1$,
and the the fifth inequality holds by the inequality $\sum_{t=1}^Tt^{-\frac{1}{3}} \leq \int^T_0x^{-\frac{1}{3}}dx = \frac{3}{2}T^{\frac{2}{3}}$.
Let $\eta = T^{-\frac{2}{3}}$ and $\mu=d^{-\frac{1}{2}}T^{-\frac{2}{3}}$,
then we have
\begin{align}
 \frac{1}{T}\sum_{t=1}^{T-1}\mathcal{G}(z_{t}) \leq \frac{\triangle + 3D\sqrt{C}}{T^{\frac{1}{3}}} +  \frac{4LD^2+2DL}{T^{\frac{2}{3}}} +  \frac{\ln(T)DL}{T^{\frac{5}{3}}} + \frac{\ln(T)\sqrt{C}D}{T^{\frac{4}{3}}}.
\end{align}
Thus, we have
\begin{align}
\mathbb{E} [\mathcal{G} (z_{\zeta})] =  \frac{1}{T}\sum_{t=1}^{T-1}\mathcal{G}(z_{t}) \leq O(\frac{1}{T^{\frac{1}{3}}}) + O(\frac{\ln(T)}{T^{\frac{4}{3}}}),
\end{align}
where $z_{\zeta}$ is chosen uniformly randomly from $\{z_t\}_{t=0}^{T-1}$.
\end{proof}

\subsubsection{ Convergence Analysis of the Acc-SZOFW* (UniGE) Algorithm }
\begin{lemma} \label{lem:G1}
 Suppose the zeroth-order gradient $v_{t} = \hat{\nabla}_{uni} f_{\xi_t}(z_t) + (1-\rho_t)\big(v_{t-1} -\hat{\nabla}_{uni} f_{\xi_t}(z_{t-1})\big)$ be generated from Algorithm \ref{alg:2}.
 Let $\alpha_t = \frac{1}{t+1}$, $\theta_t = \frac{1}{(t+1)(t+2)}$,
 $\gamma_t = (1+\theta_t)\eta_t$, $\eta=\eta_t \leq (t+1)^{-a}$ and $\rho_t = t^{-a}$ for some $a\in (0,1]$ and the smoothing parameter $\beta=\beta_t \leq d^{-1}(t+1)^{-a}$, then
 we have
 \begin{align}
  \mathbb{E} \|v_t - \nabla f(z_t)\| \leq \frac{\beta L d}{2}+ \sqrt{C}(t+1)^{-\frac{a}{2}},
 \end{align}
 where $C=\frac{24L^2dD^2 + 3L^2+ 2\sigma^2_2}{2-2^{-a}-a}$.
\end{lemma}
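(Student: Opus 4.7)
The plan is to adapt the proof of Lemma~\ref{lem:E1} essentially verbatim, with the key substitution that the natural mean of the UniGE is not the coordinate-wise smoothed gradient $\hat{\nabla}_{coo} f$ but the Gaussian-ball-smoothed function $\nabla f_\beta$. Specifically, Lemma~\ref{lem:A4}(2) gives $\mathbb{E}_{\xi_t,u}[\hat{\nabla}_{uni} f_{\xi_t}(x)] = \nabla f_\beta(x)$, so I will introduce $A_t := \mathbb{E}\|v_t - \nabla f_\beta(z_t)\|^2$, bound $A_t$ by a one-step recursion and induction, and finally convert back to a bound on $\mathbb{E}\|v_t - \nabla f(z_t)\|$ using the bias estimate $\|\nabla f_\beta(x)-\nabla f(x)\|\le \beta Ld/2$ from Lemma~\ref{lem:A4}(1).

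For the recursion, I would rewrite
\[
v_t - \nabla f_\beta(z_t) = (1-\rho_t)\bigl(v_{t-1}-\nabla f_\beta(z_{t-1})\bigr) + (1-\rho_t)E^{(1)}_t + \rho_t E^{(2)}_t,
\]
with $E^{(1)}_t = \nabla f_\beta(z_t)-\nabla f_\beta(z_{t-1}) - \bigl(\hat{\nabla}_{uni} f_{\xi_t}(z_t) - \hat{\nabla}_{uni} f_{\xi_t}(z_{t-1})\bigr)$ and $E^{(2)}_t = \nabla f_\beta(z_t) - \hat{\nabla}_{uni} f_{\xi_t}(z_t)$. Both $E^{(1)}_t$ and $E^{(2)}_t$ have mean zero conditional on the past, so squaring and taking expectations decouples the recursion just as in~\eqref{eq:E1}. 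I would control $\mathbb{E}\|E^{(1)}_t\|^2$ by Lemma~\ref{lem:A4}(3), which yields $\mathbb{E}\|\hat{\nabla}_{uni} f_{\xi_t}(z_t)-\hat{\nabla}_{uni} f_{\xi_t}(z_{t-1})\|^2 \le 3dL^2\|z_t-z_{t-1}\|^2 + \tfrac{3}{2}L^2 d^2\beta^2$, combined with $\|z_t-z_{t-1}\|\le 2\eta D$ from Lemma~\ref{lem:A1}. For $\mathbb{E}\|E^{(2)}_t\|^2$ I would apply a triangle/Young decomposition through $\nabla f(z_t)$ and use Assumption~2 ($\mathbb{E}\|\hat{\nabla}_{uni} f_{\xi_t}(z_t)-\nabla f_\beta(z_t)\|^2\le \sigma_2^2$). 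With the schedule $\eta_t \le (t+1)^{-a}$, $\beta \le d^{-1}(t+1)^{-a}$, and $\rho_t = t^{-a}$, collecting the coefficients yields, up to the advertised constants,
\[
A_t \le (1-\rho_t)^2 A_{t-1} + \frac{24L^2 d D^2 + 3L^2 + 2\sigma_2^2}{(t+1)^{2a}}.
\]

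From here I would run the same induction used after~\eqref{eq:E1}: verify the base cases $t=0,1$ by bounding the raw variance directly, and then use convexity of $s\mapsto (s+1)^{-a}$ (so that $(t+2)^{-a}-(t+1)^{-a}\ge -a(t+1)^{-a-1}$) to absorb the residual $1/(t+1)^{2a}$ into the inductive hypothesis $A_t\le C(t+1)^{-a}$ with $C$ as stated. A single application of Jensen followed by the triangle inequality then gives
\[
\mathbb{E}\|v_t-\nabla f(z_t)\| \le \sqrt{A_t} + \|\nabla f_\beta(z_t)-\nabla f(z_t)\| \le \sqrt{C}(t+1)^{-a/2} + \tfrac{\beta Ld}{2}.
\]

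The proof is structurally a transcription of Lemma~\ref{lem:E1}, so the main obstacle is purely in the bookkeeping: the UniGE Lipschitz-like bound in Lemma~\ref{lem:A4}(3) carries an extra factor of $d$ relative to the CooGE bound in Lemma~\ref{lem:A3}, which is exactly what promotes $D^2$ to $d D^2$ in the final constant $C$. To keep the smoothing bias $\beta^2 L^2 d^2$ commensurate with the $(t+1)^{-2a}$ decay, it is essential to pick $\beta$ scaling like $d^{-1}(t+1)^{-a}$ (as assumed); any coarser choice would spoil the induction. Once the $d$-dependence is propagated consistently through the two error terms $E^{(1)}_t$ and $E^{(2)}_t$, the rest of the argument is routine.
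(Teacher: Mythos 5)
Your proposal matches the paper's proof essentially step for step: the same auxiliary quantity $A_t=\mathbb{E}\|v_t-\nabla f_\beta(z_t)\|^2$, the same mean-zero decomposition into the two error terms, the same use of Lemma~\ref{lem:A4}(3) with Lemma~\ref{lem:A1} to produce the $24L^2dD^2+3L^2$ contribution, the same induction with base cases $t=0,1$ and the convexity trick, and the same final Jensen-plus-bias conversion via $\|\nabla f_\beta-\nabla f\|\le \beta Ld/2$. The only cosmetic difference is that your mention of a Young decomposition of $E^{(2)}_t$ through $\nabla f(z_t)$ is unnecessary (and is not what the paper does), since Assumption~2 already bounds $\mathbb{E}\|\hat{\nabla}_{uni}f_{\xi_t}(z_t)-\nabla f_\beta(z_t)\|^2\le\sigma_2^2$ directly, which is exactly the constant $2\rho_t^2\sigma_2^2$ you end up quoting.
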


\begin{proof}
First, we define $f_{\beta}(x)=\mathbb{E}_{u\sim U_B}[f(x+\beta u)]$ be a smooth approximation of $f(x)$,
where $U_B$ is the uniform distribution over the $d$-dimensional unit Euclidean ball $B$. By Lemma 5 in \cite{ji2019improved},
we have $\mathbb{E}_{(u,\xi)}[\hat{\nabla}_{uni} f_{\xi}(x)]=f_{\beta}(x)$.
Next, we will give an upper bound of $ \mathbb{E} \|v_t - \nabla f_{\beta}(z_t)\|$.
 It is easy verified that
 \begin{align}
  v_t-v_{t-1} = -\rho_tv_{t-1} + (1-\rho_t)(\hat{\nabla}_{uni} f_{\xi_t}(z_t) - \hat{\nabla}_{uni} f_{\xi_t}(z_{t-1})) + \rho_t\hat{\nabla}_{uni} f_{\xi_t}(z_t),
 \end{align}
 and $\nabla f_{\beta}(z_t) = \mathbb{E}_{\xi_t} [\hat{\nabla}_{uni} f_{\xi_t}(z_t)]$ and $\mathbb{E}_{\xi_t} [\hat{\nabla}_{uni} f_{\xi_t}(z_t)-\hat{\nabla}_{uni} f_{\xi_t}(z_{t-1})]=
 \nabla f_{\beta}(z_t) - \nabla f_{\beta}(z_{t-1})$.
 Then we have
 \begin{align} \label{eq:G2}
  & A_t = \mathbb{E} \|\nabla f_{\beta}(z_t) - v_t\|^2 = \mathbb{E} \|\nabla f_{\beta}(z_t) - \nabla f_{\beta}(z_{t-1}) + \nabla f_{\beta}(z_{t-1}) - v_{t-1} - (v_t-v_{t-1})\|^2 \nonumber \\
  & =  \mathbb{E} \|\nabla f_{\beta}(z_t) - \nabla f_{\beta}(z_{t-1}) + \nabla f_{\beta}(z_{t-1}) \!-\! v_{t-1} \!+\! \rho_tv_{t-1} \!-\! (1\!-\!\rho_t)(\hat{\nabla}_{uni} f_{\xi_t}(z_t) \!-\! \hat{\nabla}_{uni} f_{\xi_t}(z_{t-1})) \!-\! \rho_t\hat{\nabla}_{uni} f_{\xi_t}(z_t)\|^2 \nonumber \\
  & = \mathbb{E} \|(1-\rho_t)(\nabla f_{\beta}(z_{t-1}) -v_{t-1}) + (1-\rho_t)\big(\nabla f_{\beta}(z_t) - \nabla f_{\beta}(z_{t-1}) - \hat{\nabla}_{uni} f_{\xi_t}(z_t) + \hat{\nabla}_{uni} f_{\xi_t}(z_{t-1})\big) \nonumber \\
  & \quad + \rho_t(\nabla f_{\beta}(z_{t})- \hat{\nabla}_{uni} f_{\xi_t}(z_t))\|^2 \nonumber \\
  & = (1-\rho_t)^2\mathbb{E} \|\nabla f_{\beta}(z_{t-1}) -v_{t-1}\|^2 + (1-\rho_t)^2\mathbb{E} \|\nabla f_{\beta}(z_t) - \nabla f_{\beta}(z_{t-1}) - \hat{\nabla}_{uni} f_{\xi_t}(z_t) + \hat{\nabla}_{uni} f_{\xi_t}(z_{t-1})\|^2 \nonumber \\
  & \quad  + 2\rho_t(1-\rho_t)\langle \nabla f_{\beta}(z_t) - \nabla f_{\beta}(z_{t-1}) - \hat{\nabla}_{uni} f_{\xi_t}(z_t) + \hat{\nabla}_{uni} f_{\xi_t}(z_{t-1}), \nabla f_{\beta}(z_t) - \hat{\nabla}_{uni} f_{\xi_t}(z_t)\rangle \nonumber \\
  & \quad + \rho_t^2 \mathbb{E} \|\nabla f_{\beta}(z_t) - \hat{\nabla}_{uni} f_{\xi_t}(z_t)\|^2 \nonumber \\
  & \leq (1-\rho_t)^2\mathbb{E} \|\nabla f_{\beta}(z_{t-1}) -v_{t-1}\|^2 + 2(1-\rho_t)^2\mathbb{E} \|\nabla f_{\beta}(z_t) - \nabla f_{\beta}(z_{t-1}) - \hat{\nabla}_{uni} f_{\xi_t}(z_t) + \hat{\nabla}_{uni} f_{\xi_t}(z_{t-1})\|^2 \nonumber \\
  & \quad + 2\rho_t^2 \mathbb{E} \|\nabla f_{\beta}(z_t) - \hat{\nabla}_{uni} f_{\xi_t}(z_t)\|^2 \nonumber \\
  & \leq (1-\rho_t)^2 A_{t-1} + 2(1-\rho_t)^2 \mathbb{E} \| \hat{\nabla}_{uni} f_{\xi_t}(z_t) - \hat{\nabla}_{uni} f_{\xi_t}(z_{t-1})\|^2 + 2\rho_t^2\sigma^2_2 \nonumber \\
  & \leq (1-\rho_t)^2 A_{t-1} + 2(1-\rho_t)^2 \big( 3dL^2\mathbb{E} \| z_t - z_{t-1}\|^2 + \frac{3L^2d^2\beta^2}{2}\big)+ 2\rho_t^2\sigma^2_2 \nonumber \\
  & \leq (1-\rho_t)^2 A_{t-1} + 24(1-\rho_t)^2 \eta^2 d L^2 D^2 + 3(1-\rho_t)^2 L^2d^2\beta^2 + 2\rho_t^2\sigma^2_2,
 \end{align}
 where the first inequality holds by Cauchy-Schwarz inequality, and the second inequality follows by the equality $\mathbb{E}\|\zeta-\mathbb{E}[\zeta]\|^2 =
 \mathbb{E}\|\zeta\|^2 - \|\mathbb{E}[\zeta]\|^2$, and the Lemma \ref{lem:A4},
 and the last inequality holds by the above Lemma \ref{lem:A1}.

 Let $\rho_t = t^{-a}$, $\eta=\eta_t \leq (t+1)^{-a}$ for some $a\in (0,1]$ and $\beta \leq d^{-1}(t+1)^{-a}$,
  by \eqref{eq:G2}, we have
 \begin{align}
 A_{t+1} & \leq (1-\frac{1}{(t+1)^a})^2A_t +(1-\frac{1}{(t+1)^a})^2 \frac{24L^2dD^2 + 3L^2}{(t+1)^{2a}} + \frac{2\sigma^2_2}{(t+1)^{2a}} \nonumber \\
 & \leq (1-\frac{1}{(t+1)^a})^2A_t + \frac{24L^2dD^2 + 3L^2 + 2\sigma^2_2}{(t+1)^{2a}}.
 \end{align}
 Here we claim that $A_t \leq C(t+1)^{-a}$, where $C=\frac{24L^2dD^2 + 3L^2+ 2\sigma^2_2}{2-2^{-a}-a}$, and prove it. Define $h(a)=2-2^{-a}-a$,
 since $h'(a)=2^{-a}\ln(2)-1 \geq 0$, it is easy verified that $2\leq \frac{4}{2-2^{-a}-a}\leq 4$.
 When $t=0$, we have
 \begin{align}
  A_0 =  \mathbb{E}[\|\nabla f_{\beta}(z_0) - \hat{\nabla}_{uni} f_{\xi_0}(z_0)\|^2] \leq \sigma^2_2 \leq \frac{24L^2dD^2 + 3L^2+ 2\sigma^2_2}{2-2^{-a}-a}\leq C\cdot 1^{-a}.
 \end{align}
 When $t=1$, we have
 \begin{align}
  A_1 =  \mathbb{E}[\|\nabla f_{\beta}(z_1) - \hat{\nabla}_{uni} f_{\xi_1}(z_1)\|^2] \leq \sigma^2_2 \leq \frac{24L^2dD^2 + 3L^2+ 2\sigma^2_2}{2-2^{-a}-a}/2\leq C\cdot 2^{-a}.
 \end{align}
 Assume that $A_t \leq C(t+1)^{-a}$ for $t\geq 1$, we have
 \begin{align}
 A_{t+1} & \leq (1-\frac{1}{(t+1)^a})^2A_t + \frac{24L^2dD^2 + 3L^2+ 2\sigma^2_2}{(t+1)^{2a}} \nonumber \\
 &  \leq (1-\frac{1}{(t+1)^a})^2\cdot C(t+1)^{-a} + \frac{24L^2dD^2 + 3L^2+ 2\sigma^2_2}{(t+1)^{2a}} \nonumber \\
 & \leq C(t+1)^{-a} - 2C(t+1)^{-2a} + C(t+1)^{-3a} + \frac{(2-2^{-a}-a)C}{(t+1)^{2a}} \nonumber \\
 & \leq C(t+1)^{-a} + \frac{-2C + C(t+1)^{-a}+(2-2^{-a}-a)C}{(t+1)^{2a}}\nonumber \\
 & \leq C(t+1)^{-a} + \frac{-2C + C2^{-a}+(2-2^{-a}-a)C}{(t+1)^{2a}}\nonumber \\
 & = C(t+1)^{-a} - \frac{aC }{(t+1)^{2a}}.
 \end{align}
 Define $g(t)=(t+1)^{-a}$ for $a\in (0,1]$. Since $g(t)$ is a convex function, we have $g(t+1)\geq g(t) + g'(t)$, i.e.,
 $(t+2)^{-a}-(t+1)^{-a} \geq -a t^{a+1}$. Then we have
 \begin{align}
 A_{t+1}  \leq C(t+1)^{-a} - \frac{aC }{(t+1)^{2a}} \leq C(t+1)^{-a} - \frac{aC }{(t+1)^{a+1}} \leq C(t+2)^{-a}.
 \end{align}
 Thus we have $A_t=\mathbb{E} \|\nabla f_{\beta}(z_t) - v_t\|^2 \leq C(t+1)^{-a}, \ a\in (0,1]$ for all $t\geq 0$.

 By Jensen's inequality, we have
 \begin{align}
  \mathbb{E} \|\nabla f_{\beta}(z_t) - v_t\| \leq \sqrt{\mathbb{E} \|\nabla f_{\beta}(z_t) - v_t\|^2} \leq \sqrt{C}(t+1)^{-\frac{a}{2}}.
 \end{align}
 Thus we have
 \begin{align}
  \mathbb{E} \|\nabla f(z_t)-v_t\| &=\mathbb{E} \|\nabla f(z_t)- \nabla f_{\beta}(z_t) + \nabla f_{\beta}(z_t)- v_t\| \nonumber\\
  & \leq \mathbb{E} \|\nabla f(z_t)- \nabla f_{\beta}(z_t)\| + \mathbb{E} \|\nabla f_{\beta}(z_t)- v_t\| \nonumber\\
  & \leq \frac{\beta L d}{2}+ \sqrt{C}(t+1)^{-\frac{a}{2}},
 \end{align}
 where the last inequality holds by the Lemma \ref{lem:A4}.
\end{proof}

\begin{theorem}
Suppose $\{x_t,y_t,z_t\}^{T-1}_{t=0}$ be generated from Algorithm \ref{alg:2} by using the UniGE zeroth-order gradient estimator.
Let $\alpha_t = \frac{1}{t+1}$, $\theta_t = \frac{1}{(t+1)(t+2)}$,
$\eta=\eta_t = T^{-\frac{2}{3}}$, $\gamma_t = (1+\theta_t)\eta_t$, $\rho_t = t^{-\frac{2}{3}}$ for $t\geq 1$ and $\beta=d^{-1}T^{-\frac{2}{3}}$, then we have
 \begin{align}
 \mathbb{E} [\mathcal{G} (z_{\zeta})] \leq \frac{1}{T}\sum_{t=1}^{T-1}\mathcal{G}(z_{t}) \leq O(\frac{\sqrt{d}}{T^{\frac{1}{3}}})+ O(\frac{\sqrt{d}\ln(T)}{T^{\frac{4}{3}}}),
\end{align}
where $z_{\zeta}$ is chosen uniformly randomly from $\{z_t\}_{t=0}^{T-1}$.
\end{theorem}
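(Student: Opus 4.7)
The plan is to mirror the proof of Theorem~\ref{th:3} (the CooGE version) essentially line by line, with Lemma~\ref{lem:G1} replacing Lemma~\ref{lem:E1} and the parameter choice $\beta = d^{-1}T^{-2/3}$ replacing $\mu = d^{-1/2}T^{-2/3}$. The only genuinely new issue is that the variance constant $C$ in Lemma~\ref{lem:G1} carries an extra factor of $d$ coming from the UniGE bound in Lemma~\ref{lem:A4}(3), which is what produces the extra $\sqrt{d}$ in the rate.

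First, I would start from the $L$-smoothness inequality for $f$ applied to $z_{t+1} = (1-\alpha_{t+1})y_{t+1} + \alpha_{t+1}x_{t+1}$. Using $y_{t+1} = z_t + \eta_t(w_t - z_t)$ and $x_{t+1} = x_t + \gamma_t(w_t - x_t)$, the inner product splits as a $(w_t - z_t)$-term with coefficient $(1-\alpha_{t+1})\eta_t + \alpha_{t+1}\gamma_t$ plus an $(x_t - z_t)$-term with coefficient $\alpha_{t+1}(1-\gamma_t)$. For the first, the optimality of $w_t$ against $v_t$ together with the definition of $\mathcal{G}$ gives, exactly as in \eqref{eq:F2}, $\langle \nabla f(z_t), w_t - z_t\rangle \leq D\|\nabla f(z_t) - v_t\| - \mathcal{G}(z_t)$. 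For the second, I would invoke the $L$-smoothness and Lemma~\ref{lem:A1} to transfer from $z_t$ to $z_{t-1}$ at the cost of an additive $2L\eta^2 D^2$, and then unroll to obtain the analogue of \eqref{eq:F4}: $\langle \nabla f(z_t), x_t - z_t\rangle \leq 2tL\eta^2 D^2 + \eta D\sum_{i=0}^{t-1}\theta_i \|\nabla f(z_i) - v_i\|$.

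Next I would combine these bounds with $\|z_{t+1} - z_t\| \leq 2\eta D$ from Lemma~\ref{lem:A1} to get the per-step descent
\begin{align}
f(z_{t+1}) \leq f(z_t) + 4L\eta^2 D^2 - \eta \mathcal{G}(z_t) + 2\eta D\|\nabla f(z_t) - v_t\| + \alpha_{t+1}\eta D \sum_{i=0}^{t-1}\theta_i \|\nabla f(z_i) - v_i\|, \nonumber
\end{align}
telescope from $t = 0$ to $T-1$, use Assumption~4 to absorb $f(z_0) - f(z_{T-1}) \leq \triangle$, and swap the double sum via $\sum_{t=0}^{T-1}\alpha_{t+1}\sum_{i=0}^{t-1}\theta_i = \sum_{t=0}^{T-2}\theta_t \sum_{i=t+1}^{T-1}\alpha_{i+1}$, using $\sum_{i=t+1}^{T-1}(i+1)^{-1} \leq \ln(T/(t+1))$ and $\sum_t \theta_t \ln(T/(t+1)) \leq \ln(T)$. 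This reduces the bound to
\begin{align}
\frac{1}{T}\sum_{t=1}^{T-1}\mathbb{E}[\mathcal{G}(z_t)] \leq \frac{\triangle}{\eta T} + 4L\eta D^2 + \frac{2D}{T}\sum_{t=0}^{T-1}\mathbb{E}\|\nabla f(z_t) - v_t\| + \frac{D\ln(T)}{T}\max_t \mathbb{E}\|\nabla f(z_t) - v_t\|. \nonumber
\end{align}

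Finally I would plug in Lemma~\ref{lem:G1} with $a = 2/3$, so that $\mathbb{E}\|\nabla f(z_t) - v_t\| \leq \frac{\beta L d}{2} + \sqrt{C}(t+1)^{-1/3}$ with $C = O(d)$, use $\sum_{t=0}^{T-1}(t+1)^{-1/3} \leq \tfrac{3}{2}T^{2/3}$, and set $\eta = T^{-2/3}$, $\beta = d^{-1}T^{-2/3}$. The dominant terms become $\triangle/(\eta T) = \triangle\, T^{-1/3}$, $D\sqrt{C}\, T^{-1/3} = O(\sqrt{d}\, T^{-1/3})$ from the $\sum (t+1)^{-1/3}$ contribution, while $\beta L d / 2 = L T^{-2/3}$ and the $\ln(T)$ residuals contribute $O(\sqrt{d}\ln(T)/T^{4/3})$, yielding the claimed rate. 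The main obstacle is essentially bookkeeping: tracking carefully where the $d$-dependence enters (only through $C$ in Lemma~\ref{lem:G1}, not through $\beta L d$ which is made $O(T^{-2/3})$ by the choice of $\beta$), and verifying that $b_1$-style mini-batching is \emph{not} used so that one truly has per-iteration query cost $2$.
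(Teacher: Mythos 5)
Your proposal is correct and follows essentially the same route as the paper: the descent recursion, the Frank--Wolfe gap bound via the optimality of $w_t$, the unrolled bound on $\langle \nabla f(z_t), x_t - z_t\rangle$, the double-sum swap with the $\ln(T)$ estimates, and finally Lemma~\ref{lem:G1} with $a=\tfrac{2}{3}$ and $C=O(d)$ are exactly the steps in the paper's proof (which itself defers to the CooGE argument of Theorem~\ref{th:F1}). The only cosmetic difference is that you bound the weighted tail sum by a $\max_t$ rather than keeping the $(t+1)^{-1/3}$ factor, which slightly loosens the secondary term to $O(\sqrt{d}\ln(T)/T)$, but this is still dominated by the leading $O(\sqrt{d}\,T^{-1/3})$ term and does not affect the stated rate.
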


\begin{proof}
 This proof can follow the proof of Theorem \ref{th:F1}. Here we show some different results.
 Similarly, we have
 \begin{align}
 \frac{1}{T}\sum_{t=1}^{T-1}\mathcal{G}(z_{t}) &\leq \frac{\triangle}{\eta T} +  4L\eta D^2 + \frac{2D}{T} \sum_{t=0}^{T-1}\|\nabla f(z_t) - v_t\| +  \frac{D}{T} \sum_{t=0}^{T-2}\theta_{t}\big(\sum_{i=t+1}^{T-1}\alpha_{i+1}\big)\|\nabla f(z_{t}) - v_{t}\| \nonumber \\
 & \leq \frac{\triangle}{\eta T} +  4L\eta D^2 + DLd\beta + \frac{2D\sqrt{C}}{T} \sum_{t=0}^{T-1}(t+1)^{-\frac{1}{3}} \nonumber \\
 & \quad +  \frac{D}{T} \sum_{t=0}^{T-2}\frac{1}{(t+1)(t+2)}\big(\sum_{i=t+1}^{T-1}\frac{1}{i+1}\big)(\frac{\beta L d}{2} + \sqrt{C}(t+1)^{-\frac{1}{3}}) \nonumber \\
 & \leq \frac{\triangle}{\eta T} +  4L\eta D^2 + DLd\beta + \frac{2D\sqrt{C}}{T} \sum_{t=0}^{T-1}(t+1)^{-\frac{1}{3}} \nonumber \\
 & \quad +  \frac{D}{T} \sum_{t=0}^{T-1}\frac{1}{(t+1)(t+2)}\ln(\frac{T}{t+1})(\frac{\beta L d}{2} + \sqrt{C}(t+1)^{-\frac{1}{3}}) \nonumber \\
 & \leq \frac{\triangle}{\eta T} +  4L\eta D^2 + DLd\beta + \frac{2D\sqrt{C}}{T} \sum_{t=0}^{T-1}(t+1)^{-\frac{1}{3}} \nonumber \\
 & \quad +  \frac{\ln(T)DLd\beta}{2T} + \frac{\sqrt{C}D}{T^2}\big(\sum_{t=0}^{T-1}\frac{1}{(t+1)(t+2)}\ln(\frac{T}{t+1})\big) \big(\sum_{t=0}^{T-1}(t+1)^{-\frac{1}{3}} \big)\nonumber \\
 & \leq \frac{\triangle}{\eta T} +  4L\eta D^2 + DLd\beta + \frac{3D\sqrt{C}}{T^{\frac{1}{3}}} +  \frac{\ln(T)DLd\beta}{2T} + \frac{\sqrt{C}D\ln(T)}{T^{\frac{4}{3}}},
\end{align}
where the second inequality holds by Lemma \ref{lem:G1} with $a=\frac{2}{3}$, and the third inequality follows by the inequality $\sum_{i=t+1}^{T-1}\frac{1}{i+1} \leq \int^{T}_{t+1}\frac{1}{x}dx \leq \ln(\frac{T}{t+1})$,
and the fourth inequality holds by $\sum_{t=0}^{T-1} \frac{1}{(t+1)(t+2)}\ln(\frac{T}{t+1}) \leq \ln(T)\sum_{t=0}^{T-1} \frac{1}{(t+1)(t+2)}=\ln(T)(1-\frac{1}{T+1})\leq \ln(T)$,
and the the fifth inequality holds by the inequality $\sum_{t=1}^Tt^{-\frac{1}{3}} \leq \int^T_0x^{-\frac{1}{3}}dx = \frac{3}{2}T^{\frac{2}{3}}$.
Let $\eta = T^{-\frac{2}{3}}$ and $\mu=d^{-1}T^{-\frac{2}{3}}$,
then we have
 \begin{align}
 \frac{1}{T}\sum_{t=1}^{T-1}\mathcal{G}(z_{t}) \leq \frac{\triangle + 3D\sqrt{C}}{T^{\frac{1}{3}}} +  \frac{4LD^2+DL}{T^{\frac{2}{3}}} +  \frac{\ln(T)DL}{2T^{\frac{5}{3}}} + \frac{\ln(T)\sqrt{C}D}{T^{\frac{4}{3}}}.
\end{align}
Since $a=\frac{2}{3}$ and $C=\frac{24L^2dD^2 + 3L^2+ 2\sigma^2_2}{2-2^{-a}-a}=O(d)$, we have
 \begin{align}
 \mathbb{E} [\mathcal{G} (z_{\zeta})] =\frac{1}{T}\sum_{t=1}^{T-1}\mathcal{G}(z_{t}) \leq O(\frac{\sqrt{d}}{T^{\frac{1}{3}}})+ O(\frac{\sqrt{d}\ln(T)}{T^{\frac{4}{3}}}),
\end{align}
where $z_{\zeta}$ is chosen uniformly randomly from $\{z_t\}_{t=0}^{T-1}$.
\end{proof}

\subsection{Application: black-box adversarial attack}
\label{Appendix:A3}
\textbf{Details of pre-trained models. }
In the experiment, we use the pre-trained DNN models on MNIST and CIFAR10 datasets as the target black-box models.
For MNIST dataset, we attack a pre-trained 4-layer CNN: 2 convolutional layers followed by 2 fully-connected layers with ReLU activation and max-pooling applied after the convolutional layers,
which can achieves 99.16\% test accuracy on natural examples. For CIFAR10 dataset, we attack a pre-trained ResNet18 model \cite{he2016deep}, which can achieves 93.07\% test accuracy on natural examples.

\textbf{Parameter setting of the SAP problem.}
In the SAP experiment, we choose $\varepsilon=0.3$ for MNIST dataset and $\varepsilon=0.1$ for CIFAR10 dataset. For fair comparison, we choose step size $\eta=1/\sqrt{T}$ and the smoothing parameter $\delta=\mu=0.01$ for both FW-Black algorithm and Acc-ZO-FW algorithm. We set the number of samples of random gradient estimators to be $d$ in FW-Black, where $d=28\times28$ and $d=3\times32\times32$ for MNIST and CIFAR10. We set $\alpha_{t}=\frac{1}{t+1}$, $\theta_t=\frac{1}{(t+1)(t+2)}$ and $\gamma_{t}=2(1+\theta_{t}) \eta_{t}$ in Acc-ZO-FW as our theory suggested. The total iteration T is set to be 1000 for both datasets. For MNIST dataset, we randomly choose 1000 images correctly classified by the pre-trained model from the same class, so $n=1$ in the problem \eqref{eq:ap} and we run the experiment 1000 times. For CIFAR10 dataset, we randomly choose 100 images correctly classified from the same class, so $n=1$ in the problem \eqref{eq:ap} and we run the experiment 100 times with different images.

\textbf{Parameter setting of the UAP problem.}
In the UAP experiment, we choose $\varepsilon=0.3$ for both MNIST dataset and CIFAR10 dataset.
For fair comparison, we choose the mini-batch size $b=20$ for all stochastic zeroth-order methods, the number of samples of random gradient estimators is 1 in both our accelerated algorithms and the existing algorithms.
We choose step size $\eta=1/\sqrt{T}$ in ZSCG and Acc-SZOFW, $\eta=T^{-\frac{3}{4}}$ in ZO-SFW and $\eta=T^{-\frac{2}{3}}$ in Acc-SZOFW* as their theories suggested, respectively. We set $b_2=q=20$, $b_1=300$, $\alpha_{t}=\frac{1}{t+1}$, $\theta_t=\frac{1}{(t+1)(t+2)}$, $\gamma_{t}=(1+\theta_{t}) \eta_{t}$, $\mu=d^{-\frac{1}{2}} T^{-\frac{1}{2}}$ and $\beta=d^{-1} T^{-\frac{1}{2}}$ in Acc-SZOFW as our theory suggested. We set $\alpha_{t}=\frac{1}{t+1}$, $\theta_t=\frac{1}{(t+1)(t+2)}$, $\gamma_{t}=6(1+\theta_{t}) \eta_{t}$, $\mu=d^{-\frac{1}{2}} T^{-\frac{2}{3}}$ and $\beta=d^{-1} T^{-\frac{2}{3}}$ in Acc-SZOFW* as our theory suggested. The total iteration T is set to be 1000 for MNIST and 5000 for CIFAR10. For both datasets, we randomly choose 300 images correctly classified by their corresponding pre-trained models from the same class, so $n=300$ in the problem \ref{eq:ap} and we run all the stochastic zeroth-order algorithms once.

\textbf{Generated adversarial examples of the SAP problem.}
Figure \ref{fig:sap-adversarial-images} displays the original images, single adversarial perturbations and adversarial images generated by the Acc-ZO-FW algorithm. The pixel values of the showed adversarial perturbations are normalized to $[0,1]^{d}$ by min-max normalization, respectively.

\begin{figure*}[htb]
    \vskip 0.2in
    \begin{center}
        \subfigure[MNIST]{
            \begin{minipage}[b]{0.35\columnwidth}
                \includegraphics[width=1\textwidth]{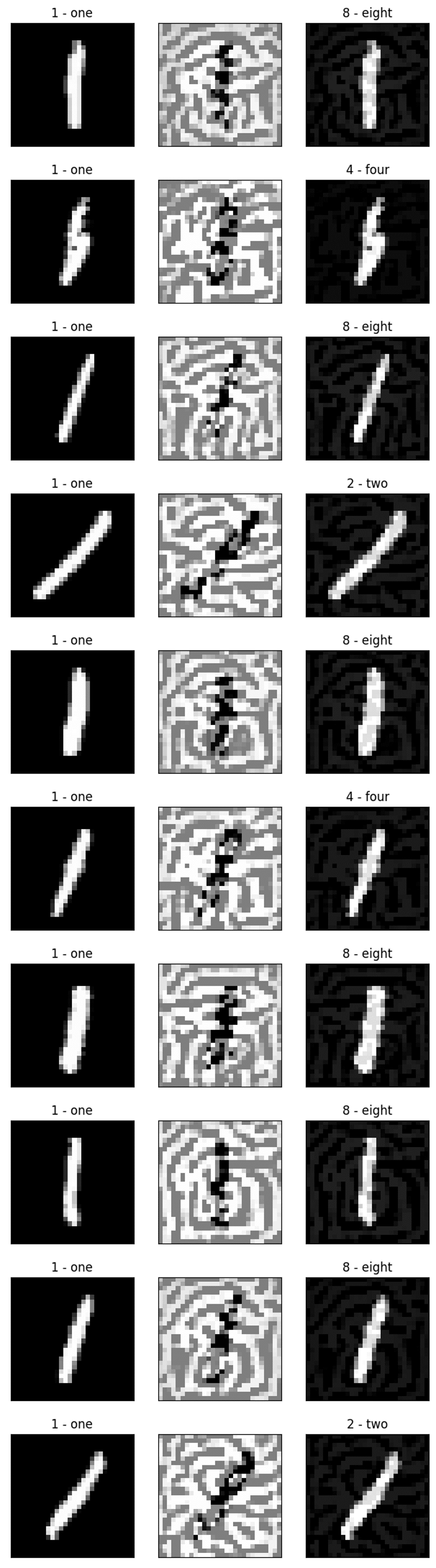}
            \end{minipage}
            \label{fig:mnist_sap}
        }
        \subfigure[CIFAR10]{
            \begin{minipage}[b]{0.35\columnwidth}
                \includegraphics[width=1\textwidth]{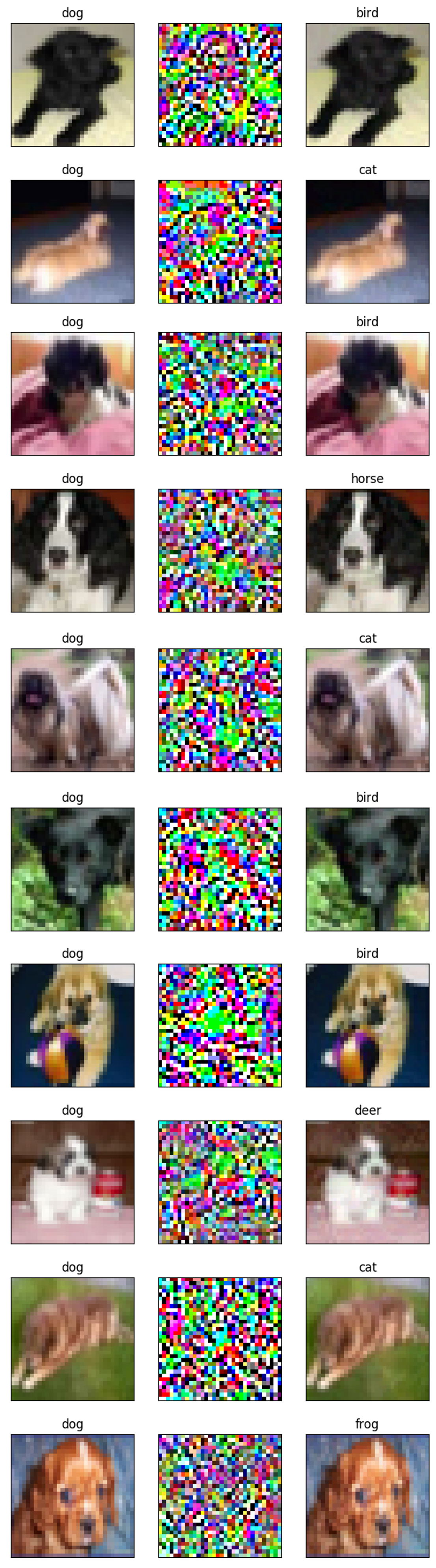}
            \end{minipage}
            \label{fig:cifar10_sap}
        }
	\caption{Generated adversarial examples on MNIST (class '1') and CIFAR10 (class 'dog') for the SAP problem. In each dataset, the first column represents the original images, the second column represents the single adversarial perturbations, the third column represents the adversarial images. The model predictions of these images are showed above each image. }
    \label{fig:sap-adversarial-images}
    \end{center}
    \vskip -0.2in
\end{figure*}

\textbf{Generated adversarial examples of the UAP problem.}
Figure \ref{fig:uap-adversarial-images-mnist} and Figure \ref{fig:uap-adversarial-images-cifar10} displays the original images, universal adversarial perturbations and adversarial images generated by our proposed accelerated zeroth-order algorithms. The pixel values of the showed adversarial perturbations are normalized to $[0,1]^{d}$ by min-max normalization, respectively.

\begin{figure*}[htb]
    \vskip 0.2in
    \begin{center}
        \subfigure[Acc-SZOFW (UniGE)]{
            \begin{minipage}[b]{0.8\columnwidth}
                \includegraphics[width=1\textwidth]{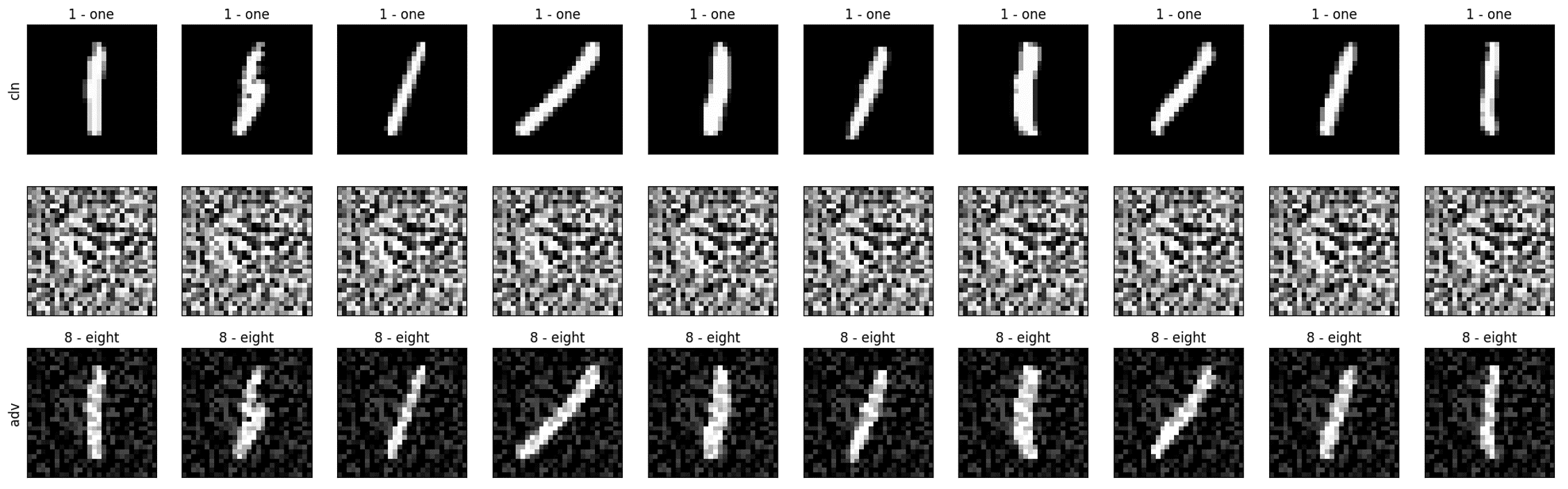}
            \end{minipage}
            \label{fig:mnist-uap-accszofw-unige}
        }
        \subfigure[Acc-SZOFW (CooGE)]{
            \begin{minipage}[b]{0.8\columnwidth}
                \includegraphics[width=1\textwidth]{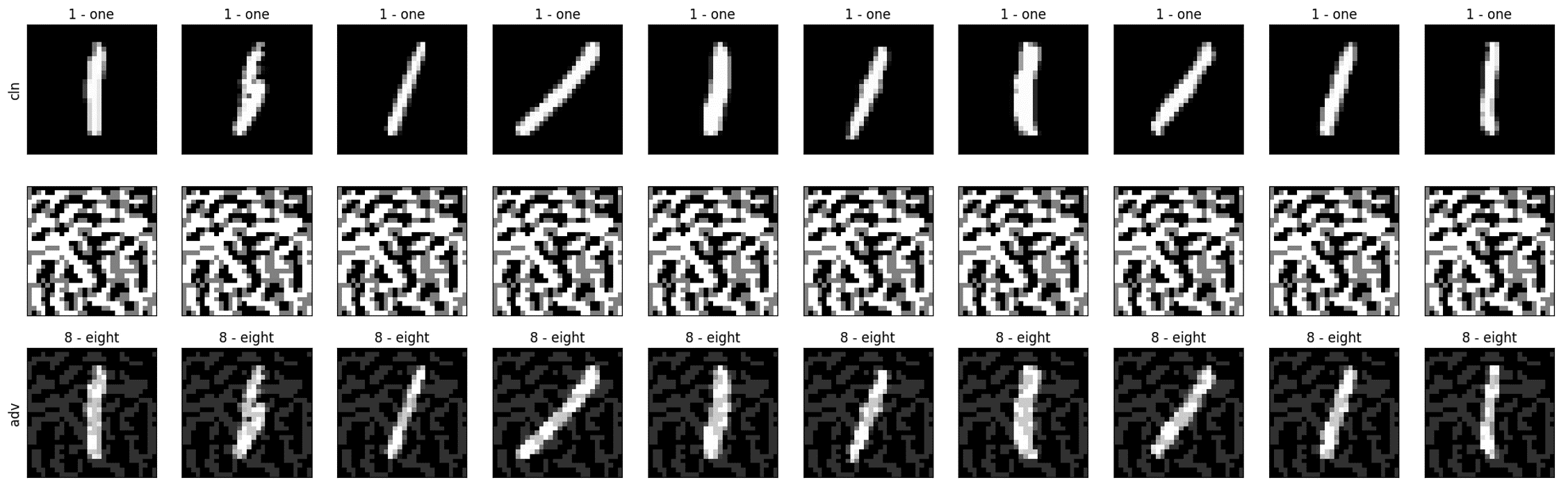}
            \end{minipage}
            \label{fig:mnist-uap-accszofw-cooge}
        }
        \subfigure[Acc-SZOFW* (UniGE)]{
            \begin{minipage}[b]{0.8\columnwidth}
                \includegraphics[width=1\textwidth]{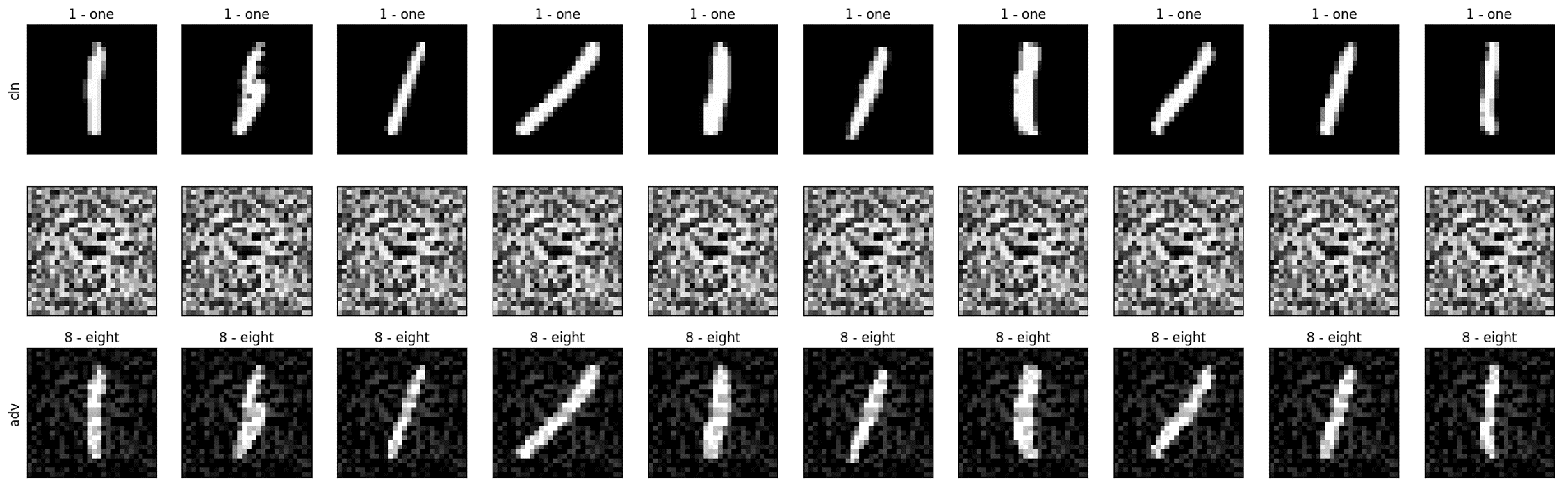}
            \end{minipage}
            \label{fig:mnist-uap-accszofwp-unige}
        }
        \subfigure[Acc-SZOFW* (CooGE)]{
            \begin{minipage}[b]{0.8\columnwidth}
                \includegraphics[width=1\textwidth]{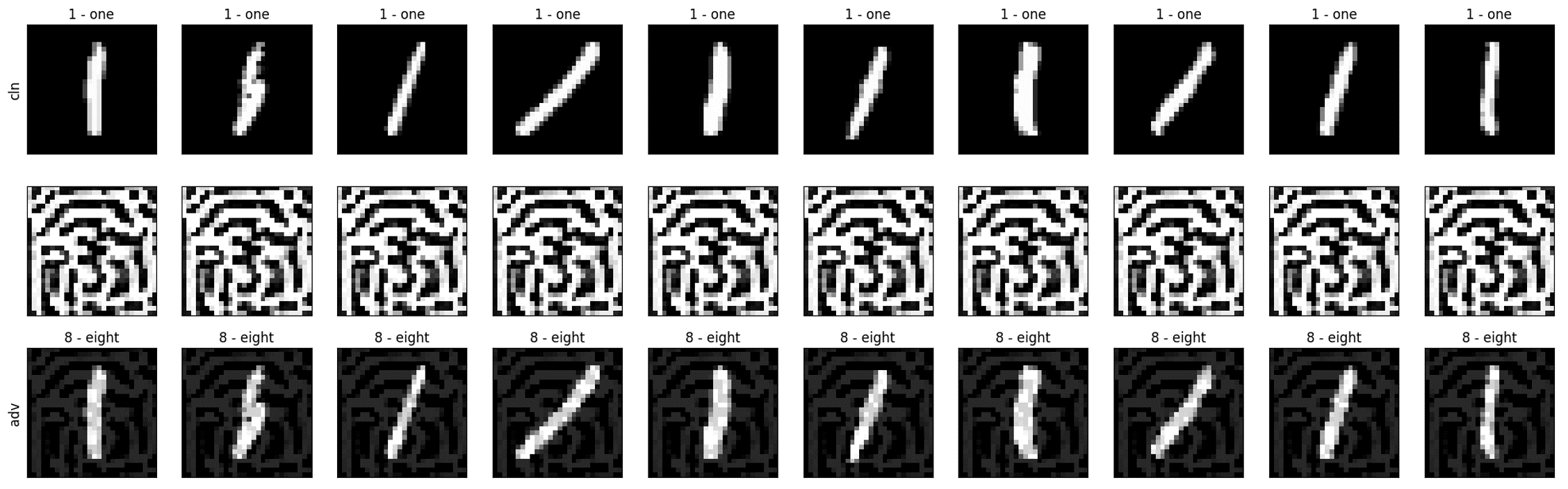}
            \end{minipage}
            \label{fig:mnist-uap-accszofwp-cooge}
        }
	\caption{Generated adversarial examples on MNIST (class '1') for the UAP problem. In each sub-figure, the first row represents the original images, the second row represents the universal adversarial perturbations, the third row represents the adversarial images. The model predictions of these images are showed above each image. }
    \label{fig:uap-adversarial-images-mnist}
    \end{center}
    \vskip -0.2in
\end{figure*}

\begin{figure*}[htb]
    \vskip 0.2in
    \begin{center}
        \subfigure[Acc-SZOFW (UniGE)]{
            \begin{minipage}[b]{0.8\columnwidth}
                \includegraphics[width=1\textwidth]{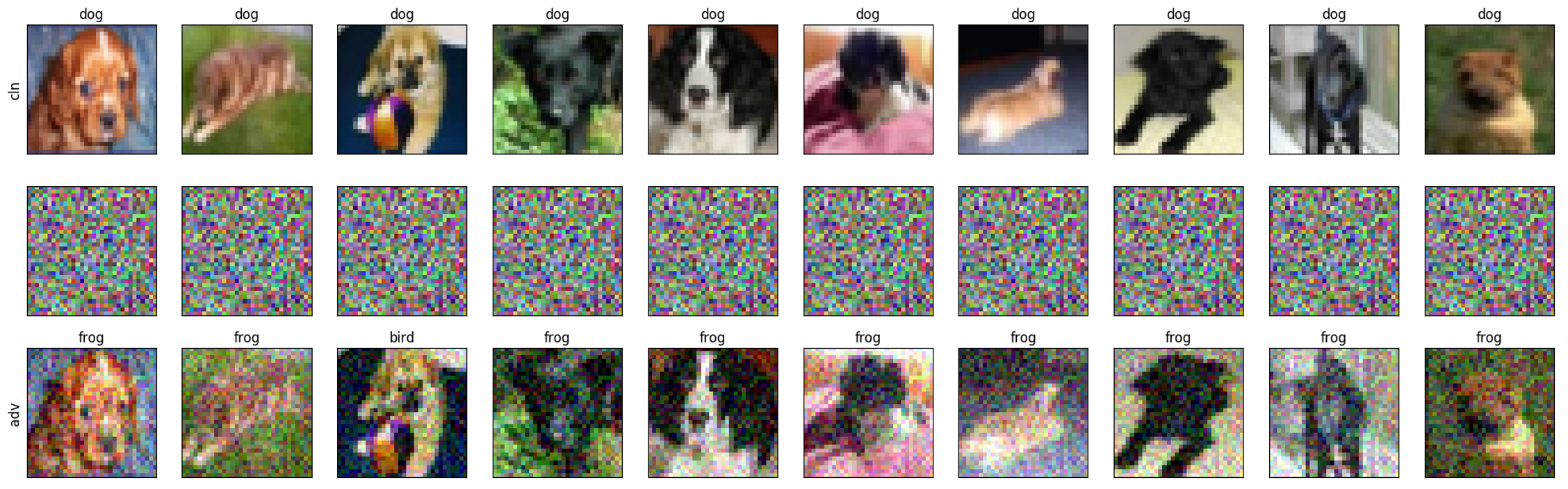}
            \end{minipage}
            \label{fig:cifar10-uap-accszofw-unige}
        }
        \subfigure[Acc-SZOFW (CooGE)]{
            \begin{minipage}[b]{0.8\columnwidth}
                \includegraphics[width=1\textwidth]{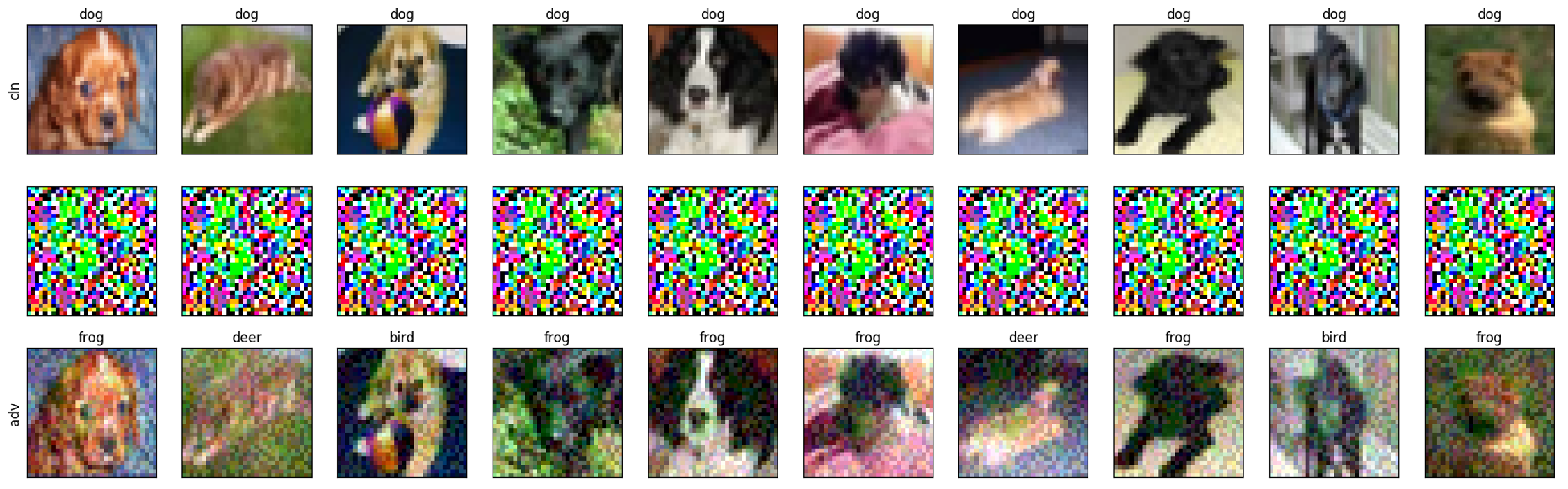}
            \end{minipage}
            \label{fig:cifar10-uap-accszofw-cooge}
        }
        \subfigure[Acc-SZOFW* (UniGE)]{
            \begin{minipage}[b]{0.8\columnwidth}
                \includegraphics[width=1\textwidth]{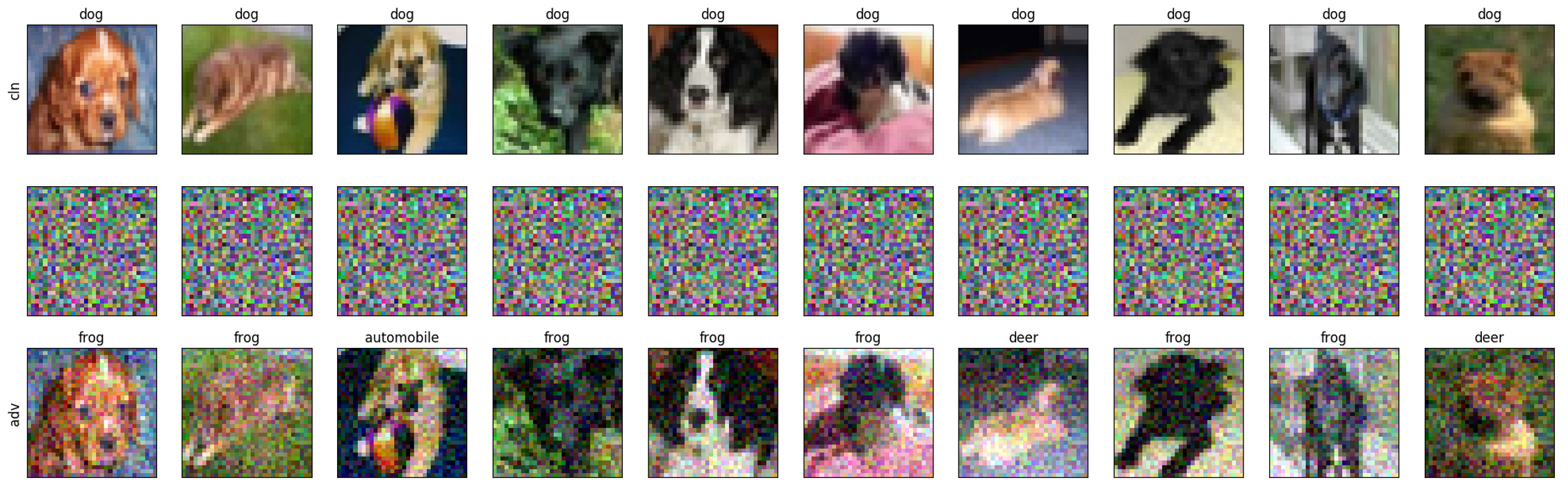}
            \end{minipage}
            \label{fig:cifar10-uap-accszofwp-unige}
        }
        \subfigure[Acc-SZOFW* (CooGE)]{
            \begin{minipage}[b]{0.8\columnwidth}
                \includegraphics[width=1\textwidth]{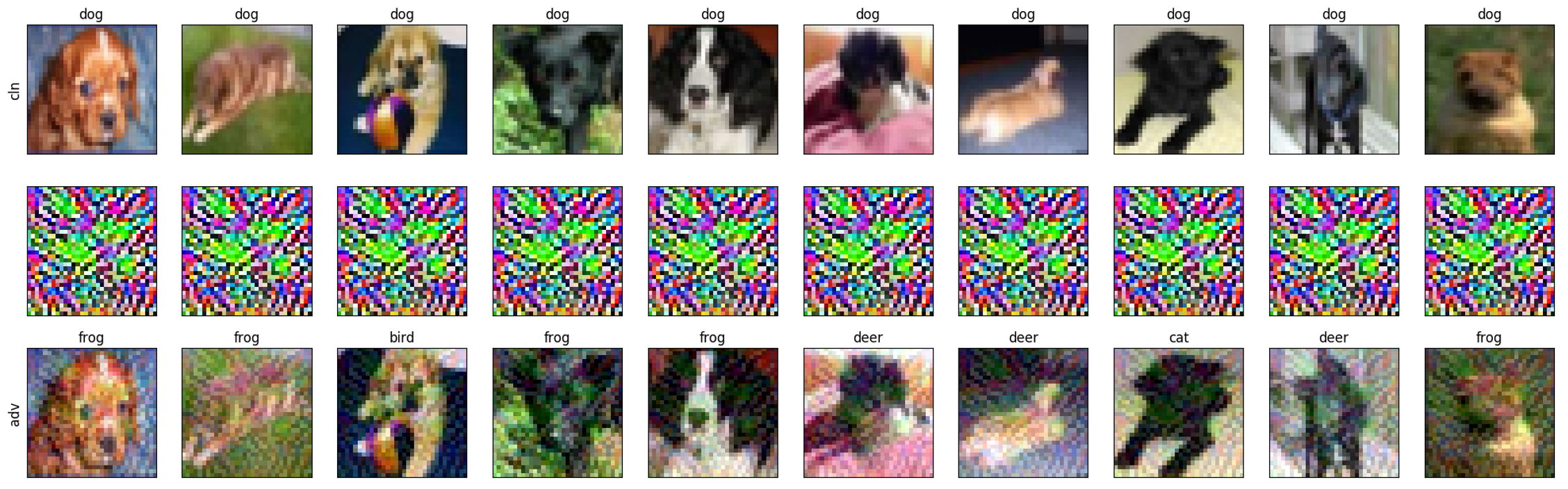}
            \end{minipage}
            \label{fig:cifar10-uap-accszofwp-cooge}
        }
	\caption{Generated adversarial examples on CIFAR10 (class 'dog') for the UAP problem. In each sub-figure, the first row represents the original images, the second row represents the universal adversarial perturbations, the third row represents the adversarial images. The model predictions of these images are showed above each image. }
    \label{fig:uap-adversarial-images-cifar10}
    \end{center}
    \vskip -0.2in
\end{figure*}

\subsection{Application: robust black-box binary classification}
\label{Appendix:A4}
\begin{figure*}[htb]
    \vskip 0.2in
    \begin{center}
        \subfigure[phishing]{
            \begin{minipage}[b]{0.22\textwidth}
                \includegraphics[width=1\textwidth]{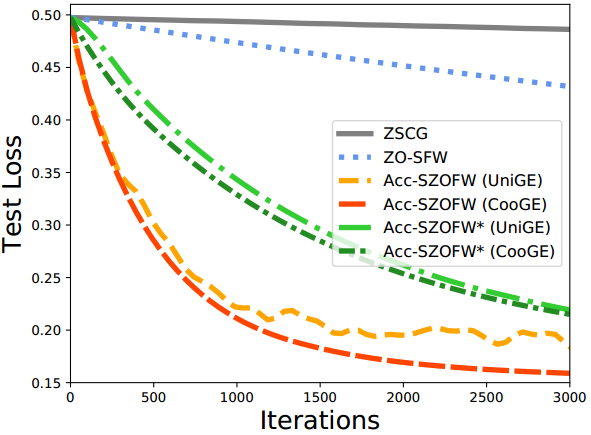}
            \end{minipage}
            \label{fig:rlc_phishing_iteration_test}
        }
        \subfigure[a9a]{
            \begin{minipage}[b]{0.22\textwidth}
                \includegraphics[width=1\textwidth]{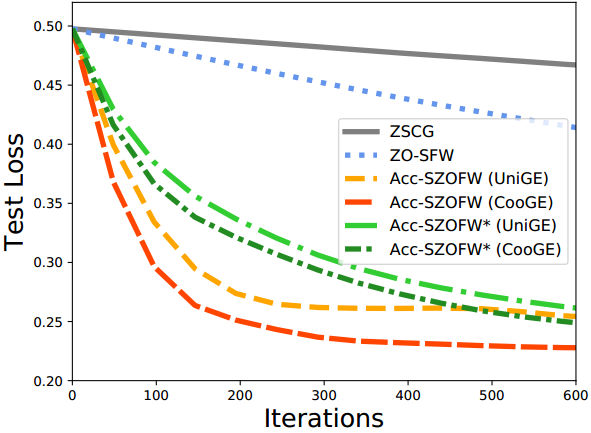}
            \end{minipage}
            \label{fig:rlc_a9a_iteration_test}
        }
        \subfigure[w8a]{
            \begin{minipage}[b]{0.22\textwidth}
                \includegraphics[width=1\textwidth]{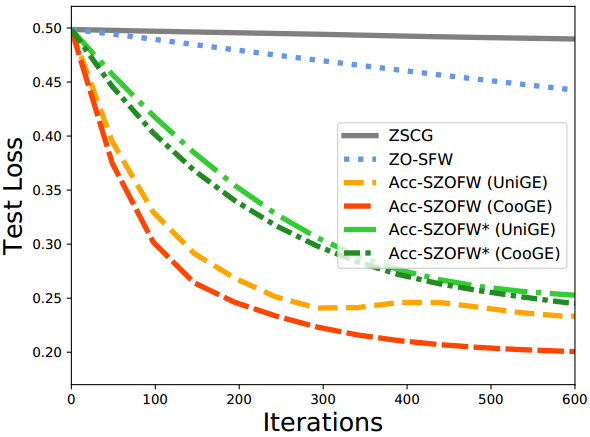}
            \end{minipage}
            \label{fig:rlc_w8a_iteration_test}
        }
        \subfigure[covtype.binary]{
            \begin{minipage}[b]{0.22\textwidth}
                \includegraphics[width=1\textwidth]{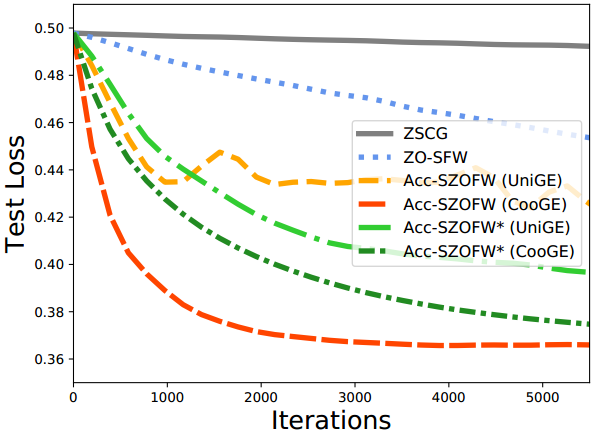}
            \end{minipage}
            \label{fig:rlc_covtype_iteration_test}
        }

        \subfigure[phishing]{
            \begin{minipage}[b]{0.22\textwidth}
                \includegraphics[width=1\textwidth]{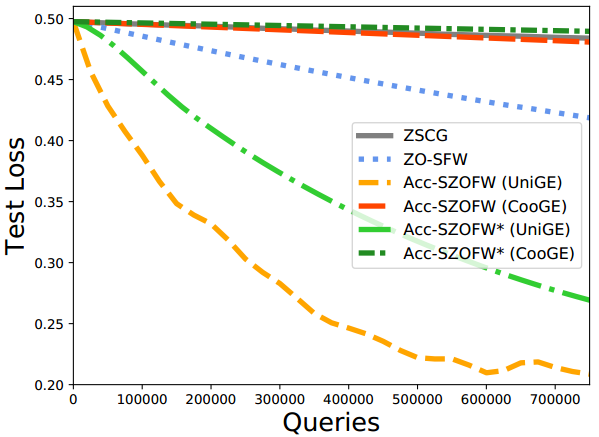}
            \end{minipage}
            \label{fig:rlc_phishing_queries_test}
        }
        \subfigure[a9a]{
            \begin{minipage}[b]{0.22\textwidth}
                \includegraphics[width=1\textwidth]{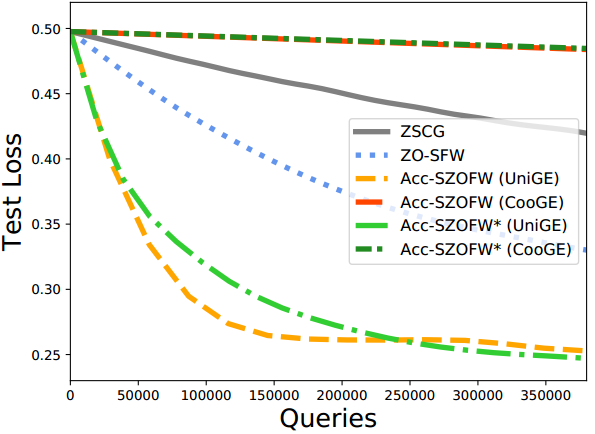}
            \end{minipage}
            \label{fig:rlc_a9a_queries_test}
        }
        \subfigure[w8a]{
            \begin{minipage}[b]{0.22\textwidth}
                \includegraphics[width=1\textwidth]{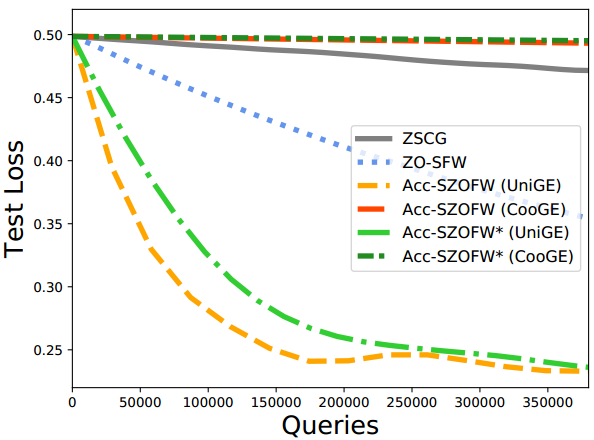}
            \end{minipage}
            \label{fig:rlc_w8a_queries_test}
        }
        \subfigure[covtype.binary]{
            \begin{minipage}[b]{0.22\textwidth}
                \includegraphics[width=1\textwidth]{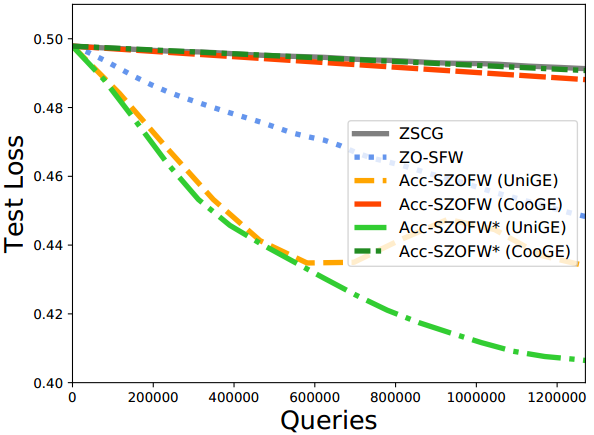}
            \end{minipage}
            \label{fig:rlc_covtype_queries_test}
        }
	\caption{Comparison of six algorithms for robust black-box binary classification. Above: the convergence of test loss against iterations. Below: the convergence of test loss against queries.}
    \label{fig:rlc-testloss}
    \end{center}
    \vskip -0.2in
\end{figure*}

\textbf{Parameter setting of the robust black-box classification problem.}
For all datasets, we set $\sigma=10$ and $\theta=10$. For fair comparison, we choose the mini-batch size $b=100$ for all stochastic zeroth-order methods, the number of samples of random gradient estimators is 1 in both our accelerated algorithms and the existing algorithms. We choose step size $\eta=1/\sqrt{T}$ in ZSCG and Acc-SZOFW, $\eta=T^{-\frac{3}{4}}$ in ZO-SFW and $\eta=T^{-\frac{2}{3}}$ in Acc-SZOFW* as their theories suggested, respectively. We set $b_2=q=100$, $b_1=10000$, $\alpha_{t}=\frac{1}{t+1}$, $\theta_t=\frac{1}{(t+1)(t+2)}$, $\gamma_{t}=(1+\theta_{t}) \eta_{t}$, $\mu=d^{-\frac{1}{2}} T^{-\frac{1}{2}}$ and $\beta=d^{-1} T^{-\frac{1}{2}}$ in Acc-SZOFW as our theory suggested. We set $\alpha_{t}=\frac{1}{t+1}$, $\theta_t=\frac{1}{(t+1)(t+2)}$, $\gamma_{t}=6(1+\theta_{t}) \eta_{t}$, $\mu=d^{-\frac{1}{2}} T^{-\frac{2}{3}}$ and $\beta=d^{-1} T^{-\frac{2}{3}}$ in Acc-SZOFW* as our theory suggested. The total iteration T is set to be 1000000 and we run all the stochastic zeroth-order algorithms once.

\textbf{The convergence of test loss.}
Figure~\ref{fig:rlc-testloss} shows that the convergence of test loss against iterations and queries, which is analogous to those of train loss.

\end{appendices}

\end{onecolumn}

%%%%%%%%%%%%%%%%%%%%%%%%%%%%%%%%%%%%%%%%%%%%%%%%%%%%%%%%%%%%%%%%%%%%%%%%%%%%%%%
%%%%%%%%%%%%%%%%%%%%%%%%%%%%%%%%%%%%%%%%%%%%%%%%%%%%%%%%%%%%%%%%%%%%%%%%%%%%%%%

\end{document}